\date{}
\renewcommand{\uppercasenonmath}[1]{}
\numberwithin{equation}{section} \theoremstyle{plain}
\newtheorem{theorem}{Theorem}[section]
\newtheorem{corollary}[theorem]{Corollary}
\newtheorem{lemma}[theorem]{Lemma}
\newtheorem{proposition}[theorem]{Proposition}
\theoremstyle{definition}
\newtheorem{definition}[theorem]{Definition}
\newtheorem{remark}[theorem]{Remark}
\newtheorem*{ack*}{ACKNOWLEDGEMENTS}
\newcommand{\pf}{\noindent\begin {proof}}
\newcommand{\epf}{\end{proof}}
\begin{document}
\begin{center}
{
{\bf\large Recollements induced by good silting objects}\\

\vspace{0.5cm}    Rongmin Zhu$^{1,*}$, Jiaqun Wei$^{2}$ \\
%\bigskip

$^{1}$Department of Mathematics, Nanjing University, Nanjing 210093, China\\
$^{2}$School of Mathematics Sciences, Nanjing Normal University, Nanjing 210023, China
}
\end{center}
\title{}\maketitle\footnote[0]{*Corresponding author.

E-mail addresses: rongminzhu@hotmail.com, weijiaqun@njnu.edu.cn. }
\footnote[0]{2010 Mathematics Subject Classification: 16E30, 18E30, 16D90.}
\vspace{-3em}
%\bigskip
 $$\bf  Abstract$$
%\bigskip
\leftskip10truemm \rightskip10truemm \noindent  Let $U$ be a silting object in a derived category over a dg-algebra $A$, and let $B$ be the endomorphism
 dg-algebra of $U$. Under some appropriate hypotheses, we show that if $U$ is
good, then there exist a dg-algebra $C$, a homological epimorphism $B\rightarrow C$ and a recollement among the (unbounded) derived categories $\mathbf{D}(C,d)$ of $C$, $\mathbf{D}(B,d)$ of $B$ and $\mathbf{D}(A,d)$ of $A$. In particular, the kernel of the  left derived functor $-\otimes^{\mathbb{L}}_{B}U$ is triangle equivalent to the derived category $\mathbf{D}(C,d)$. Conversely, if $-\otimes^{\mathbb{L}}_{B}U$ admits a fully faithful left adjoint functor,
then $U$ is good. Moreover, we establish a criterion for the existence of a recollement of the derived category of a  dg-algebra relative to two derived categories of weak non-positive dg-algebras. Finally, some applications are given related to good cosilting objects,  good 2-term silting complexes, good tilting complexes and modules, which recovers a recent result by Chen and Xi. \\
\vbox to 0.3cm{}\\
{\it Key Words:}  silting object; differential graded algebra; recollement; homological epimorphism.\\

\leftskip0truemm \rightskip0truemm
\bigskip

%\bigskip
\section { \bf Introduction }
\leftskip0truemm \rightskip0truemm
\bigskip
Tilting theory generalizes the classical Morita theory of equivalences between
module categories. Note that a version of the Tilting Theorem
can be formulated at the level of derived category. In representation
theory, the results of Rickard \cite{ri89} and Keller \cite{ke94} on a derived Morita theory for rings show that compact tilting complexes guarantee the existence of derived equivalences.
We refer to \cite{ke07} for a recent survey.

All above mention equivalences are induced by compact objects (i.e. objects such that the induced Hom-covariant functor commutes with respect
to direct sums). Recently, infinitely generated tilting modules over arbitrary associated rings
have become of interest in and attracted increasing attentions towards understanding derived
categories and equivalences of general rings \cite{ba10,ba11,tr09}. In \cite{ba10}, it is shown that if $T$ is a good tilting module over a ring $A$, the  right derived functor $\mathbb{R}\mathrm{Hom}_{A}(T,-)$ induces an equivalence between the derived category
$\mathbf{D}(A)$ and a subcategory of the derived category $\mathbf{D}(B)$, where $B$ is the endomorphism algebra of $T$. Thus, in general, the right derived functor $\mathbb{R}\mathrm{Hom}_{A}(T,-)$  does not define a
derived equivalence between $A$ and $B$. Let $T$ be a tilting $A$-module with projective dimension one. In \cite{ch12}, Chen and Xi proved that if
the tilting module $T$ is good, then the triangulated category Ker$(T\otimes_{B}^{\mathbb{L}}-)$ is equivalent
to the derived category of a ring $C$, and therefore, there is a recollemment among the
derived categories of rings $A$, $B$ and $C$. Conversely, the existence of such a recollement
implies that the given tilting module $T$ is good.

Silting modules are generalizations of tilting ones and they were introduced in \cite{an16} as infinitely generated versions of support $\tau$-tilting modules.
In a route similar to the one followed by tilting modules and, more generally,
tilting complexes, a few authors extended the notion of silting
object to the unbounded derived category $\mathbf{D}(R)$ of a ring $R$.
 Wei introduced in \cite{we13} the notion of semi-tilting complexes, which is a small generalization of tilting complexes. Small semi-tilting complexes induce some
equivalences between dg derived categories. In \cite{br18}, Breaz and Modoi  defined big, small and good silting objects in $\mathbf{D}(A, d)$, where $A$ is a dg-algebra. Under some fairly general appropriate
hypotheses, they show that it induces derived equivalences between the
derived category over $A$ and a subcategory of the derived category of
dg-endomorphism algebra $B$ of $U$
$$\mathrm{\mathbb{R}Hom}_{A}(U,-) : \mathbf{D}(A, d)\leftrightarrows\mathcal{ K}^{\perp} : -\otimes^{\mathbb{L}}_{B}U,$$
where $\mathcal{K} = \mathrm{Ker}(-\otimes^{\mathbb{L}}_{B}U)$.

The main purpose of this paper is to give a characterization of the triangulated categories
$\mathrm{Ker}(-\otimes^{\mathbb{L}}_{B}U)$ for a good silting object $U$. Namely we show that if
the silting object $U$ is good, then the triangulated category $\mathrm{Ker}(-\otimes^{\mathbb{L}}_{B}U)$ is equivalent
to the derived category of a dg-algebra $C$, and therefore, there is a recollemment among the
derived categories of dg-algebra $A$, $B$ and $C$. Conversely, the existence of such a recollement
implies that the given silting object $U$ is good. More precisely:

\begin{theorem}\label{the4.8} Let $A$ be a dg-algebra, $U$ a silting object
 in $\mathbf{D}(A,d)$, and let $B=\mathrm{DgEnd}_{A}(U)$.

(1) If $U$ is good, then there exist a dg-algebra $C$ and a recollement of triangulated categories
$$\xymatrixcolsep{3pc}\xymatrix{\mathbf{D}(C,d)\ar[r]^{\lambda_{\ast}}
&\ar@<-3ex>[l]_{}
\ar@<3ex>[l]^{}\mathbf{D}(B,d)
\ar[r]^{\mathbf{G}}&\ar@<-3ex>[l]_{}\ar@<3ex>[l]^{}\mathbf{D}(A,d)}$$
such that $\lambda:B\rightarrow C$ is a homological epimorphism. Moreover, $C$ is weak non-positive if and only if $H^{i}(U\otimes^{\mathbb{L}}_{A}\mathbb{R}\mathrm{Hom}_{A}(U,A))=0$ for $i\geq 2$, or equivalently, $H^{i}(U\otimes^{\mathbb{L}}_{A}\mathbb{R}\mathrm{Hom}_{B^{op}}(U,B))=0$ for $i\geq 2$.

(2) If the triangle functor $\mathbf{G}:=-\otimes^{\mathbb{L}}_{B}U: \mathbf{D}(B,d)\rightarrow \mathbf{D}(A,d)$ admits a fully faithful left adjoint $j_{!} :\mathbf{D}(A,d) \rightarrow \mathbf{D}(B,d)$, then the given silting object is good.
\end{theorem}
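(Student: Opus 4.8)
The plan is to obtain the recollement as the ``outer frame'' of the adjoint triple attached to $\mathbf{G}:=-\otimes^{\mathbb{L}}_{B}U$, using goodness only to manufacture the one adjoint that is not automatic, and then to read off the dg-algebra $C$ and the homological epimorphism from the resulting Bousfield (co)localization.

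\textbf{The standing adjoint triple.} For any silting object $U$, $\mathbf{G}$ is a coproduct-preserving triangle functor which, by the results of Breaz--Modoi recalled above, has a fully faithful right adjoint $\mathbf{H}:=\mathbb{R}\mathrm{Hom}_{A}(U,-)$ with essential image $\mathcal{K}^{\perp}$, where $\mathcal{K}=\mathrm{Ker}\,\mathbf{G}$; in particular the counit $\mathbf{G}\mathbf{H}\to\mathrm{id}$ is invertible, so $\mathbf{G}$ is a Verdier localization functor and induces an equivalence $\mathbf{D}(B,d)/\mathcal{K}\xrightarrow{\sim}\mathbf{D}(A,d)$. For $X\in\mathbf{D}(B,d)$, completing the unit $X\to\mathbf{H}\mathbf{G}X$ to a triangle and applying $\mathbf{G}$ shows its third vertex lies in $\mathcal{K}$; hence $(\mathcal{K},\mathcal{K}^{\perp})$ is a torsion pair and $\mathcal{K}\hookrightarrow\mathbf{D}(B,d)$ already carries a right adjoint $i^{!}$. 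The only thing between us and a recollement $\mathcal{K}\to\mathbf{D}(B,d)\to\mathbf{D}(A,d)$ is a left adjoint of $\mathbf{G}$, equivalently the fact that $\mathcal{K}$ is also colocalizing.

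\textbf{Proof of (1).} Goodness of $U$ amounts to $\mathbb{R}\mathrm{Hom}_{A}(U,A)$ (equivalently $\mathbb{R}\mathrm{Hom}_{B^{op}}(U,B)$) being a compact object of $\mathbf{D}(B,d)$, equivalently $U$ being a perfect dg $B$-module, equivalently $\mathbf{G}=-\otimes^{\mathbb{L}}_{B}U$ preserving products (reduce $U$ to a bounded complex of finitely generated projectives and test on coproducts of copies of $B$). Since $\mathbf{D}(B,d)$ is compactly generated, Brown representability for its dual then furnishes a left adjoint $j_{!}$ of $\mathbf{G}$; because $\mathbf{G}$ is a Verdier localization functor, $j_{!}$ is automatically fully faithful, and $\mathcal{K}\hookrightarrow\mathbf{D}(B,d)$ now also carries a left adjoint $i^{*}$. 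This produces the recollement, once $\mathcal{K}$ is identified with a derived dg-module category. Since $\mathcal{K}\hookrightarrow\mathbf{D}(B,d)$ preserves coproducts, $i^{*}$ preserves compactness, so $i^{*}B$ is compact in $\mathcal{K}$; and $\mathrm{Hom}_{\mathcal{K}}(i^{*}B,Z[n])\cong\mathrm{Hom}_{\mathbf{D}(B,d)}(B,Z[n])\cong H^{n}(Z)$ for $Z\in\mathcal{K}$, so $i^{*}B$ is a compact \emph{generator}. By Keller's recognition theorem, the derived endomorphism dg-algebra $C$ of $i^{*}B$ satisfies $\mathcal{K}\simeq\mathbf{D}(C,d)$. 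The coproduct-preserving functor $i^{*}:\mathbf{D}(B,d)\to\mathbf{D}(C,d)$ sends $B$ to the free module $C$, hence is isomorphic to $-\otimes^{\mathbb{L}}_{B}C$ for the bimodule structure given by a canonical dg-algebra map $\lambda:B\to C$; evaluating $i^{*}i_{*}\cong\mathrm{id}$ at $C$ yields $C\otimes^{\mathbb{L}}_{B}C\cong C$, i.e.\ $\lambda$ is a homological epimorphism, and under $\mathcal{K}\simeq\mathbf{D}(C,d)$ the inclusion $i_{*}$ becomes $\lambda_{*}$. For the remaining assertion, $j_{!}$ is (under the above identifications) the functor $-\otimes^{\mathbb{L}}_{A}\mathbb{R}\mathrm{Hom}_{B^{op}}(U,B)$, equivalently $-\otimes^{\mathbb{L}}_{A}\mathbb{R}\mathrm{Hom}_{A}(U,A)$ by a comparison isomorphism valid for silting $U$; hence the gluing triangle $j_{!}\mathbf{G}B\to B\to C\to j_{!}\mathbf{G}B[1]$ becomes $U\otimes^{\mathbb{L}}_{A}\mathbb{R}\mathrm{Hom}_{A}(U,A)\to B\to C\to$, and since $H^{>0}(B)=0$ (a consequence of $U$ being silting) the long exact cohomology sequence gives $H^{i}(C)\cong H^{i+1}\big(U\otimes^{\mathbb{L}}_{A}\mathbb{R}\mathrm{Hom}_{A}(U,A)\big)$ for $i\geq1$, so that $C$ is weak non-positive exactly when $H^{i}\big(U\otimes^{\mathbb{L}}_{A}\mathbb{R}\mathrm{Hom}_{A}(U,A)\big)=0$ for all $i\geq2$.

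\textbf{Proof of (2), and the main obstacle.} Conversely, if $\mathbf{G}$ has a fully faithful left adjoint then it preserves products; but $-\otimes^{\mathbb{L}}_{B}U$ preserves products only when $U$ is a perfect dg $B$-module (again test on products of copies of $B$), and that is precisely the goodness of $U$. The delicate steps, where the real work sits, are: verifying that the left adjoint $j_{!}$ produced by Brown representability is genuinely fully faithful --- i.e.\ that the Verdier localization $\mathbf{G}$ is simultaneously a colocalization --- and, above all, checking that the abstract equivalence $\mathcal{K}\simeq\mathbf{D}(C,d)$ is compatible with restriction of scalars along an \emph{honest} homological epimorphism $B\to C$, so that the leftmost functor of the recollement is $\lambda_{*}$. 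Pinning down the bimodule bookkeeping behind $i^{*}\cong-\otimes^{\mathbb{L}}_{B}C$ and $j_{!}\cong-\otimes^{\mathbb{L}}_{A}\mathbb{R}\mathrm{Hom}_{A}(U,A)$, together with the comparison isomorphism relating $\mathbb{R}\mathrm{Hom}_{A}(U,A)$ and $\mathbb{R}\mathrm{Hom}_{B^{op}}(U,B)$, is the technical heart of the argument.
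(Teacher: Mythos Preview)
Your outline for part (1) is sound and runs closely parallel to the paper's proof, though the paper organizes things a bit differently. Rather than invoking Brown representability to produce $j_!$ from the product-preservation of $\mathbf{G}$, the paper shows directly that $\mathbf{H}(A)$ is compact in $\mathbf{D}(B,d)$ (Lemma~3.2, using the triangles from $(s3)$), identifies $\mathrm{Ker}(\mathbf{G})$ with the right orthogonal of $\mathbf{H}(A)$, and then gets the TTF triple $(\mathrm{Tria}\,\mathbf{H}(A),\mathrm{Ker}\,\mathbf{G},\mathrm{Im}\,\mathbf{H})$ and hence the recollement from general torsion-theoretic machinery. Both routes land in the same place. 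You are right that pinning down an \emph{honest} homological epimorphism $\lambda:B\to C$ is the delicate point; the paper does not get a strict map $B\to C$ directly but passes through a cofibrant replacement $E\to B$ in $\mathrm{HoDga}(k)$ and uses the smashing-localization result of Bazzoni--\v{S}\v{t}ov\'{\i}\v{c}ek (their Lemma~4.4) to realize the inclusion $\mathrm{Ker}(\mathbf{G})\hookrightarrow\mathbf{D}(B,d)$ as $\sigma^{*}f_{*}$ for dg-algebra maps $\sigma:E\to B$, $f:E\to C$. Your sentence ``hence is isomorphic to $-\otimes^{\mathbb{L}}_{B}C$ for the bimodule structure given by a canonical dg-algebra map $\lambda:B\to C$'' is exactly where this extra care is needed. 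The cohomological computation at the end (reading off $H^{i}(C)$ from the gluing triangle) matches the paper's Theorem~4.8 and Lemma~4.7.

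There is, however, a genuine gap in your argument for part (2). You assert that ``$-\otimes^{\mathbb{L}}_{B}U$ preserves products only when $U$ is a perfect dg $B$-module, and that is \emph{precisely} the goodness of $U$''. The problematic step is the last one: perfection of $_BU$ in $\mathbf{D}(B^{op},d)$ is not a priori the same as $\mathrm{codim}_{\mathrm{add}(U)}A<\infty$ in $\mathbf{D}(A,d)$. To translate a finite resolution of $_BU$ by objects of $\mathrm{add}(B)$ into one of $A$ by objects of $\mathrm{add}(U)$, one naturally applies $\mathbb{R}\mathrm{Hom}_{B^{op}}(-,U)$ and needs $\mathbb{R}\mathrm{Hom}_{B^{op}}(U,U)\simeq A$; but that double-centralizer statement is precisely what the Breaz--Modoi equivalence delivers \emph{for good} silting objects, so the reasoning is circular. (Relatedly, your opening claim that $\mathbf{H}$ is fully faithful ``for any silting object $U$'' overstates Theorem~3.2, which assumes goodness.) The paper avoids this trap: it sets $Q:=j_!(A)$, observes that $j_!$ preserves compactness (since $\mathbf{G}$ preserves coproducts) so $Q$ is compact in $\mathbf{D}(B,d)$, and uses full faithfulness of $j_!$ to get $A\cong\mathbf{G}(Q)$. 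Writing $Q$ as a summand of an object with a finite filtration by finite sums of shifts of $B$ and applying $\mathbf{G}$ shows $A\in\langle\mathrm{add}(U)\rangle$; finally $A\in{}^{\perp_{>0}}\mathrm{add}(U)$ (since $H^{>0}(U)=0$) together with Wei's criterion yields $\mathrm{codim}_{\mathrm{add}(U)}A<\infty$. Your product-preservation observation is correct and morally points in the right direction, but you still need this concrete passage through $j_!(A)$ to close the argument.
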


A very general result about recollement among  derived categories of dg
categories is proved in \cite{ya12} and \cite{ni18}. Part of our results are contained in \cite[Corollaries 7.7 and 7.8]{ni18}. Therefore our work generalizes the known results from the perspective of silting theory.

A necessary and sufficient criterion has been given \cite{ko91}
for a (bounded) derived module category of an algebra to admit a recollement, with the subcategory and quotient category again being derived module categories of rings.  Later on, the criterion has been extended and modified so as to cover derived categories of dg-algebras and unbounded derived categories as well and to work for any differential graded ring \cite{jo91,ni09}. In this paper, we also interpret these results from the viewpoint of silting theory and establish a criterion for the existence of a recollement of the derived category of a  dg-algebra (not necessary weak non-positive) relative to two derived
categories of weak non-positive dg-algebras. More precisely:

\begin{theorem}\label{the1.2}Let $B$ be a dg-algebra with right dg-modules $P$ and $Q$. Then the following are
equivalent.

$(i)$ There is a recollement
$$\xymatrixcolsep{3pc}\xymatrix{\mathbf{D}(C,d)\ar[r]^{i_{\ast}}&\ar@<-3ex>[l]_{i^{\ast}}
\ar@<3ex>[l]^{i^{!}}\mathbf{D}(B,d)
\ar[r]^{j^{\ast}}&\ar@<-3ex>[l]_{j_{!}}\ar@<3ex>[l]^{j_{\ast}}\mathbf{D}(A,d)}$$
where $C$ and $A$ are weak non-positive dg-algebras, for which
\begin{center}
$i_{\ast}(C)\cong P$,\ \ $j_{!}(A)\cong Q$.
\end{center}

(ii) In the derived category $\mathbf{D}(B,d)$, the dg-module $P$ is partial silting, $Q$ is compact partial silting, $P^{\perp}\cap Q^{\perp}=0$, and $P\in Q^{\perp}$.
\end{theorem}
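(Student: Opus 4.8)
The plan is to prove the two implications separately, using the general machinery of recollements of derived categories of dg-algebras (in the spirit of the criteria of Keller–Nicolás–Yang cited above) to reduce everything to checkable conditions on the objects $P$ and $Q$ inside $\mathbf{D}(B,d)$.

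For the direction $(i)\Rightarrow(ii)$, I would start from a recollement of the stated form and extract the properties of $P=i_\ast(C)$ and $Q=j_!(A)$ by transporting along the (fully faithful) functors $i_\ast$ and $j_!$. Since $i_\ast$ is fully faithful and has both adjoints, it sends the free dg-module $C$ (which generates $\mathbf{D}(C,d)$, is self-compatible, and —because $C$ is weak non-positive— has no positive self-extensions) to an object $P$ with the corresponding orthogonality vanishing $\Hom_{\mathbf{D}(B,d)}(P,P[i])=0$ for $i>0$; together with the fact that $i_\ast \mathbf{D}(C,d)$ is a suitable thick/localizing subcategory this is precisely the assertion that $P$ is partial silting. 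Likewise $j_!$ is fully faithful with a right adjoint $j^\ast$ that preserves coproducts, so $Q=j_!(A)$ is compact in $\mathbf{D}(B,d)$; weak non-positivity of $A$ gives $\Hom(Q,Q[i])=0$ for $i>0$, so $Q$ is compact partial silting. The identity $P^\perp\cap Q^\perp=0$ follows because the images of $i_\ast$ and $j_!$ together generate $\mathbf{D}(B,d)$ (equivalently, an object orthogonal to both $P$ and $Q$ is orthogonal to the whole category), and $P\in Q^\perp$ is exactly the standard recollement identity $j^\ast i_\ast=0$ (adjointly, $\Hom(Q,P[n])=\Hom(j_!A, i_\ast C[n])=\Hom(A, j^\ast i_\ast C[n])=0$ for all $n$).

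For the converse $(ii)\Rightarrow(i)$, the strategy is to build the recollement by hand from $P$ and $Q$. From $P$ partial silting one obtains a weak non-positive dg-algebra $C:=\mathrm{DgEnd}_B(P)$ together with a fully faithful functor $i_\ast:=-\otimes^{\mathbb{L}}_C P:\mathbf{D}(C,d)\to\mathbf{D}(B,d)$ admitting both adjoints $i^\ast$ (its left adjoint) and $i^!$ (its right adjoint); similarly from $Q$ compact partial silting one gets a weak non-positive dg-algebra $A:=\mathrm{DgEnd}_B(Q)$ and a fully faithful $j_!:=-\otimes^{\mathbb{L}}_A Q$ with right adjoint $j^\ast:=\mathbb{R}\Hom_B(Q,-)$ and, because $Q$ is compact, a further right adjoint $j_\ast$. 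The hypothesis $P\in Q^\perp$ gives $j^\ast i_\ast=0$, i.e. the image of $i_\ast$ lies in the kernel of $j^\ast$; the hypothesis $P^\perp\cap Q^\perp=0$ forces the essential images of $i_\ast$ and $j_!$ to generate $\mathbf{D}(B,d)$, which is what is needed to see that $i_\ast\mathbf{D}(C,d)$ is exactly $\ker j^\ast$ and that $j_!$ identifies $\mathbf{D}(A,d)$ with the corresponding quotient. Assembling these six functors and checking the recollement axioms (the two compositions of adjoints are zero, the three pairs are adjoint, the three middle functors are fully faithful, and there are the usual triangles relating $i_\ast i^\ast$, $\mathrm{id}$, $j_\ast j^\ast$ and $j_!j^\ast$) yields (i), with the normalization $i_\ast(C)\cong P$ and $j_!(A)\cong Q$ built in by construction.

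The main obstacle I anticipate is in $(ii)\Rightarrow(i)$: showing that the essential image of $i_\ast$ coincides with $\ker j^\ast$ (not merely is contained in it) and that this subcategory is a localizing subcategory with the quotient equivalent to $\mathbf{D}(A,d)$ via $j_!$. Containment is immediate from $P\in Q^\perp$, but the reverse containment needs the full strength of $P^\perp\cap Q^\perp=0$ plus a devissage argument: an object $X\in\ker j^\ast$ fits in a triangle with one term in $i_\ast\mathbf{D}(C,d)$ and one term in $(i_\ast\mathbf{D}(C,d))^\perp$ (using that $P$ is partial silting, so $i_\ast\mathbf{D}(C,d)$ is a well-behaved localizing subcategory), and one must check that the orthogonal term lies in $P^\perp\cap Q^\perp$ and hence vanishes. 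Keeping track of which orthogonals are taken over all shifts versus only positive shifts, and verifying that the compactness of $Q$ really does deliver the third adjoint $j_\ast$ needed for a full (as opposed to merely one-sided) recollement, are the places where care is required.
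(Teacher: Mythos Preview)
Your proposal is correct and follows essentially the same route as the paper: for $(i)\Rightarrow(ii)$ you transport the relevant vanishing and compactness along $i_\ast$ and $j_!$, and for $(ii)\Rightarrow(i)$ you build the endomorphism dg-algebras $C=\mathrm{DgEnd}_B(P)$, $A=\mathrm{DgEnd}_B(Q)$, obtain the recollement from the compact object $Q$, and then identify $\ker j^\ast=Q^\perp$ with $\mathrm{Tria}(P)\simeq\mathbf{D}(C,d)$ via the d\'evissage triangle $i_\ast i^! X\to X\to Y$ together with $P^\perp\cap Q^\perp=0$; this is precisely the paper's argument (which in turn adapts J{\o}rgensen).

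One small overstatement to correct: you assert that $i_\ast=-\otimes^{\mathbb{L}}_C P$ admits a \emph{left} adjoint $i^\ast$ directly from the partial silting hypothesis on $P$. Self-compactness of $P$ only gives you the right adjoint $i^!=\mathbb{R}\Hom_B(P,-)$ and the full embedding with image $\mathrm{Tria}(P)$; a left adjoint would require $i_\ast$ to preserve products, which is not automatic. Fortunately your d\'evissage uses only the counit of $(i_\ast,i^!)$, so the argument goes through; the left adjoint $i^\ast$ is then obtained \emph{a posteriori}, once $\mathrm{Tria}(P)$ has been identified with $Q^\perp$ and inherits both adjoints from the recollement induced by the compact object $Q$.
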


The paper is organized as follows. We will start in Section 2 with some basics about the dg-algebras and their derived categories. In Section 3, we construct
 a recollement induced by good silting objects. Moreover, we give a version of \cite[Theorem 3.3]{jo91} from the perspective of silting theory.
 In Section 4, we shall first establish a connection
between homological epimorphisms of dg-algebras and recollements of triangulated categories induced by silting objects, and then get a recollemment among derived categories of dg-algebras, where the left two terms of the recollement can be induced by a homological epimorphism of dg-algebras moving the hypotheses of $k$-flatness.
Finally, we prove our main results and investigate when the induced dg-algebra is weak non-positive.
In Section 5, we apply our main results to  good cosilting objects, good 2-term silting complexes and good tilting modules, and get the recollements induced by them. In this way, we recover and extend recent results by  Chen and Xi \cite{ch19}.

\bigskip
\section { \bf Preliminaries }

Now we introduce some notations and conventions used later in the paper.

\bigskip
\hspace{-0.4cm}\textbf{2.1} \emph{Differential graded algebras and Differential graded modules.}
\bigskip

A good reference for dg-algebras and their derived categories is the book \cite{ye17}.
Let $k$ be a commutative ring. Recall that a dg-algebra is a $\mathbb{Z}$-graded
$k$-algebra $A =\bigoplus_{i\in \mathbb{Z}} A^{i}$ endowed with a differential $d : A \rightarrow A$ such that
$d^{2} = 0$ which is homogeneous of degree 1, that is $d(A^{i})\subseteq A^{i+1}$ for all $i\in \mathbb{Z}$,
and satisfies the graded Leibniz rule:
\begin{center}
$d(ab) = d(a)b + (-1)^{i}ad(b)$, for all $a \in A^{i}$ and $b \in A$.
\end{center}
A (right) dg-module over $A$ is a $\mathbb{Z}$-graded module
$M =\bigoplus _{i\in Z}M^{i}$
endowed with a $k$-linear square-zero differential $d : M \rightarrow M$, which is
homogeneous of degree 1 and satisfies the graded Leibnitz rule:
\begin{center}
$d(ma) = d(m)a + (-1)^{i}md(a)$, for all $m \in M^{i}$ and $a \in A$.
\end{center}
Left dg-$A$-modules are defined similarly. A morphism of dg-$A$-modules is
an $A$-linear map $f : M \rightarrow N$ compatible with gradings and differentials. In
this way we obtain the category Mod($A,d$) of all dg-$A$-modules. It is an abelian category.

If $A$ is a dg-algebra, then the dual dg-algebra $A^{op}$ is defined as follows:
as graded $k$-modules $A^{op} = A$, the multiplication is given by $ab = (-1)^{ij}ba$
for all $a \in A^{i}$ and all $b\in A^{j}$ and the differential $d : A^{op} \rightarrow A^{op}$ is the
same as in the case of $A$. It is clear that a left dg-$A$-module $M$ is a right
dg-$A^{op}$-module with the ``opposite" multiplication $xa = (-1)^{ij}ax$, for all
$a\in A^{i}$ and all $x\in M^{j}$. It is not hard to see that an ordinary $k$-algebra  can be viewed as a dg-algebra concentrated in degree $0$.    A dg-algebra $A$ is
 called \emph{non-positive} if $A^{i}=0$ for all $i>0$. A dg-algebra $A$ is
 called \emph{ weak non-positive} if $H^{i}(A)=0$ for all $i>0$ and \emph{ weak positive} if $H^{i}(A)=0$ for all $i<0$.

For a dg-module $X \in \mathrm{Mod}(A, d)$ one defines (functorially) the following
$k$-modules $Z^{n}(X) = \mathrm{Ker}(X^{n}\rightarrow X^{n+1})$, $B^{n}(X) = \mathrm{Im}(X^{n-1} \rightarrow X^{n})$, and $H^{n}(X) = Z^{n}(X)/B^{n}(X)$, for all $n \in \mathbb{Z}$. We call $H^{n}(X)$ the $n$-th cohomology group of $X$. A morphism of dg-modules is called quasi-isomorphism if
it induces isomorphisms in cohomologies. A dg-module $X \in \mathrm{Mod}(A, d)$ is
called \emph{acyclic} if $H^{n}(X) = 0$ for all $n \in \mathbb{Z}$. A morphism of dg-$A$-modules
$f : X \rightarrow Y$ is called \emph{null homotopic} provided that there is a graded homomorphism $s : X \rightarrow Y$ of degree $-1$ such that $f = sd + ds$. The homotopy
category $\mathbf{K}(A, d)$ has the same objects as $\mathrm{Mod}(A, d)$ and the morphisms are
equivalence classes of morphisms of dg-modules, modulo the homotopy. It is
well known that the homotopy category is triangulated.
The derived category $\mathbf{D}(A,d)$ is obtained from $\mathbf{K}(A,d)$ by formally inverting
all quasi-isomorphisms.

Let now $A$ and $B$ be two dg-algebras and let $U$ be a dg-$B$-$A$-bimodule
(that is $U$ is a dg-$B^{op}$-$A$-module). For every $X \in \mathrm{Mod}(A, d)$, we can
consider the so called dg-Hom:
$$\mathrm{Hom}^{\bullet}_{A}(U,X) =\prod_{n\in \mathbb{Z}}\mathrm{Hom}^{n}_{A}(U,X)$$
with $\mathrm{Hom}^{n}_{A}(U,X) =\prod_{i\in\mathbb{Z}}\mathrm{Hom}_{A^{0}}(U^{i},X^{n+i})$, whose differentials are given
by $d(f)(x) = d_{Y} f(x) - (-1)^{n} fd_{X}(x)$ for all $f \in \mathrm{Hom}^{n}_{A}(X,Y)$.
 Then $\mathrm{Hom}^{\bullet}_{A}(U,X)$ becomes a dg-$B$-module, so we get a functor
$\mathrm{Hom}^{\bullet}_{A}(U,-): \mathrm{Mod}(A, d) \rightarrow \mathrm{Mod}(B, d).$
It induces a triangle functor
$\mathrm{Hom}^{\bullet}_{A}(U,-): \mathbf{K}(A, d) \rightarrow \mathbf{K}(B, d).$
A dg-$B$-module $U$ is called cofibrant if for every
acyclic dg-$B$-module $N$, we have $\mathrm{Hom}_{\mathbf{K}(B,d)}(U,N) = 0$. This is equivalent to
$\mathrm{Hom}_{\mathbf{D}(B,d)}(U,M) = \mathrm{Hom}_{\mathbf{K}(B,d)}(U,M)$
for all dg-$B$-modules $M$. Dually we define fibrant objects.
We define
$$\mathbb{R}\mathrm{Hom}_{A}(U,-) : \mathbf{D}(A, d) \rightarrow \mathbf{D}(B, d)$$
by $\mathbb{R}\mathrm{Hom}_{A}(U,X)\cong \mathrm{Hom}^{\bullet}_{A}(U',X) \cong \mathrm{Hom}^{\bullet}_{A}(U,X')$ where $U'$ is a cofibrant
replacement of $U$ and $X'$ is a fibrant replacement of $X$.

Let $Y \in \mathrm{Mod}(B, d)$. There exists a natural grading on the usual tensor
product $Y\otimes_{B}U$, which can be described as:
$$Y\otimes^{\bullet}_{B} U =\bigoplus_{n\in\mathbb{Z}}Y\otimes^{n}_{B}U,$$
where $Y\otimes^{n}_{B}U$ is the quotient of
$\bigoplus_{i\in\mathbb{Z}}Y^{i}\otimes_{B^{0}}U^{n-i}$ by the submodule generated by
$y\otimes bu- yb\otimes u$ where $y \in Y^{i}$, $u \in U^{j}$ and $b \in B^{n-i-j}$, for all
$i, j \in \mathbb{Z}$. Together with the differential
$d(y u) = d(y)u + (-1)^{i}yd(u)$, for all $y \in Y^{i}$; $u \in U,$
we get a dg-$A$-module inducing a functor $-\otimes_{B}^{\bullet}U : \mathrm{Mod}(B, d)\rightarrow \mathrm{Mod}(A, d)$
and further a triangle functor
$-\otimes_{B}^{\bullet}U : \mathbf{K}(B, d) \rightarrow \mathbf{K}(A, d)$. The left derived
tensor product is defined by
$Y\otimes_{B}^{\mathbb{L}}U= Y'\otimes_{B}^{\bullet}U\cong Y\otimes_{B}^{\bullet}U'$ where $Y'$
and $U'$ are cofibrant replacements for $Y$ and $U$ in $\mathbf{K}(B,d)$ and $\mathbf{K}(B^{op}, d)$ respectively. It induces a triangle functor
$$-\otimes_{B}^{\mathbb{L}}U : \mathbf{D}(B, d)\rightarrow \mathbf{D}(A, d)$$
which is the left adjoint of $\mathbb{R}\mathrm{Hom}_{A}(U,-)$.

\bigskip
\hspace{-0.4cm}\textbf{2.2} \emph{Dimension and triangulated subcategories.}
\bigskip

Let $\mathcal{C}$ be an additive category.
Throughout the paper, a full subcategory $\mathcal{B}$ of $\mathcal{C}$ is always assumed to be closed under isomorphisms. We denote by add($X$) the full subcategory of $\mathcal{C}$ consisting
of all direct summands of finite coproducts of copies of $X$. If $\mathcal{C}$ admits small coproducts, then we denote by Add($X$) the full
subcategory of $\mathcal{C}$ consisting of all direct summands of small coproducts of copies of $X$. Dually, if $\mathcal{C}$ admits small products, then Prod($X$) denotes the full subcategory of $\mathcal{C}$ consisting of all direct summands of small products of copies of $X$.

Let $\mathcal{D}$ be a triangulated category with the shift functor denoted by [1], and let $\mathcal{C}$ be a subcategory of $\mathcal{D}$. We define the full subcategories of $\mathcal{D}$:
\begin{align*}
\mathcal{C}^{\perp}:&=\{X\in \mathcal{D}\mid \mathrm{Hom}_{\mathcal{D}}(C,X[i])=0~\text{for all}~i\in\mathbb{Z}~\text{and}~C\in\mathcal{C}  \};\\
^{\perp}\mathcal{C}:&=\{X\in \mathcal{D}\mid \mathrm{Hom}_{\mathcal{D}}(X, C[i])=0~\text{for all}~i\in\mathbb{Z}~\text{and}~C\in\mathcal{C}  \};\\
\mathcal{C}^{\perp_{>0}}:&=\{X\in \mathcal{D}\mid \mathrm{Hom}_{\mathcal{D}}(U,X[i])=0~\text{for all}~i>0 ~\text{and}~C\in\mathcal{C}\};\\
^{\perp_{>0}}\mathcal{C}:&=\{X\in \mathcal{D}\mid \mathrm{Hom}_{\mathcal{D}}(X, C[i])=0~\text{for all}~i>0~\text{and}~C\in\mathcal{C}  \}.
\end{align*}
Consider an object $X \in \mathcal{D}$. Following \cite{we13}, we say that $X$ has the
$\mathcal{C}$-resolution dimension (respectively $\mathcal{C}$-coresolution dimension )$\leq n$, and we
write dim$_{\mathcal{C}} X \leq n$, (codim$_{\mathcal{C}} X \leq n$) provided that there is a sequence of
triangles
$$X_{i+1} \rightarrow C_{i} \rightarrow X_{i}\rightsquigarrow
~~\text{with}~~ 0 \leq i \leq n$$
$$(\text{respectively}~ X_{i}\rightarrow C_{i} \rightarrow X_{i+1}\rightsquigarrow
~~\text{with}~~ 0 \leq i \leq n)$$
in $D$, such that $C_{i} \in \mathcal{C}$, $X_{0 }= X$ and $X_{n+1} = 0$. We will write dim$_{\mathcal{C}} X < \infty$
(codim$_{\mathcal{C}} X < \infty$) if we can find a positive integer $n$ such that dim$_{\mathcal{C}} X \leq n$ (respectively, codim$_{\mathcal{C}} X \leq n$).

Given a class of objects $U$ in $\mathcal{D}$, we denote by Tria($U$) the smallest full triangulated
subcategory of $\mathcal{D}$ which contains $U$ and is closed under small coproducts. If $U$ consists of
only one single object $U$, then we simply write Tria($U$) for Tria($\{U\}$).

\bigskip
\hspace{-0.4cm}\textbf{2.3} \emph{Recollements and $\mathrm{TTF}$ triples.}
\bigskip

In this subsection we recall the notion of a recollement of triangulated categories and the TTF triples.  The standard reference is \cite{be82}.

Let $\mathcal{T}$, $\mathcal{T'}$ and $\mathcal{T''}$ be triangulated categories. A \emph{recollement} of $\mathcal{T}$ relative to $\mathcal{T}'$ and $\mathcal{T}''$ is defined by six triangulated functors as follows
$$\xymatrixcolsep{3pc}\xymatrix{\mathcal{T}'\ar[r]^{i_{\ast}=i_{!}}&\ar@<-3ex>[l]_{i^{\ast}}\ar@<3ex>[l]^{i^{!}}\mathcal{T}
\ar[r]^{j^{\ast}=j^{!}}&\ar@<-3ex>[l]_{j_{!}}\ar@<3ex>[l]^{j_{\ast}}\mathcal{T}''}$$
satisfying the following conditions:

(i) $(i^{\ast},i_{\ast}), (i_{!},i^{!}),(j_{!},j^{!})$ and $(j^{\ast}, j_{\ast})$ are adjoint pairs;

(ii) $i_{\ast}, j_{\ast}$ and $j_{!}$ are full embeddings;

(iii) $i^{!}j_{\ast}=0$, and hence, $j^{!}i_{!}=0$ and $i^{\ast}j_{!}=0$;

(iv) for any object $T \in \mathcal{T}$, there exist the following triangles in $\mathcal{T}$:
$$i_{!}i^{!}(T)\rightarrow T\rightarrow j_{\ast}j^{\ast}(T)\rightsquigarrow
 ~~~\ ~\text{and}~~~~\ \ j_{!}j^{!}(T)\rightarrow T\rightarrow i_{\ast}i^{\ast}(T)\rightsquigarrow.$$

Let $\mathcal{D}$ be a triangulated category. A \emph{torsion pair} in $\mathcal{D}$ is a pair $\mathcal{(X,Y)}$ of full subcategories $\mathcal{X}$ and $\mathcal{Y}$ of
$\mathcal{D}$ satisfying the following conditions:

(1) Hom$_{\mathcal{D}}\mathcal{(X,Y) }$= 0;

(2) $\mathcal{X}[1] \subseteq \mathcal{X}$ and $\mathcal{Y}[-1] \subseteq \mathcal{Y}$; and

(3) for each object $C \in \mathcal{D}$, there is a triangle
$$X^{C} \rightarrow C \rightarrow  Y^{C}\rightarrow X^{C}[1]$$
in $\mathcal{D}$ such that $X^{C} \in \mathcal{X}$ and $Y^{C} \in \mathcal{Y}$.

Let $\mathcal{D}$ be a triangulated category. A TTF \emph{triple} in $\mathcal{D}$ is a triple
 $\mathcal{(U,V,W)}$ of full subcategories $\mathcal{U}$, $\mathcal{V}$ and $\mathcal{W}$
 of $\mathcal{D}$ such that both $\mathcal{(U,V)}$ and $\mathcal{(V,W)}$ are torsion pairs.

\bigskip
\section { \bf Recollements induced by silting objects }

Let $A$ be a dg-algebra. Recall from \cite[Section 3]{br18} that an object $U \in \mathbf{D}(A, d)$ is called \emph{(pre)silting} provided that it satisfies (the first two of) the following conditions:

($S1$). dim$_{\mathrm{Add}(A)} U < \infty$;

($S2$). $U^{(I)} \in U^{\perp_{>0}}$ for every set $I$;

($S3$). codim$_{\mathrm{Add}(U)} A < \infty$.

An object $U \in \mathbf{D}(A, d)$ is called \emph{small (pre)silting} provided that it satisfies
(the first two of) the following conditions:

($s1$). dim$_{\mathrm{add}(A)} U < \infty$;

($s2$). $U \in U^{\perp_{>0}}$, for every set $I$;

($s3$). codim$_{\mathrm{add}(U)} A < \infty$.

Recall that an object $U \in \mathbf{D}(A, d)$ is called \emph{small} (or compact) if
Hom$_{ \mathbf{D}(A, d)}(X,-)$ commutes with coproducts. A small silting object is an object which is both silting and small. A silting object is called \emph{good} if the condition ($S3$) above can be replaced by $(s3)$.

For a silting object $U$ and an $n\in \mathbb{N}$, the conditions codim$_{\mathrm{Add}(U)} A \leq n$ and dim$_{\mathrm{Add}(A)} U \leq n$ are equivalent. Call
$n$-silting a silting object satisfying these equivalent conditions.

\begin{remark}
(1) Every small silting object is good.

(2) The notion of an $n$-silting object agrees to the $n$-semitilting complex in
\cite{we13} and to the $(n+1)$-silting complex in \cite{an16}.

(3) From \cite[Lemma 2.2]{br18}, we know that a good silting object $U$ is cofibrant both as an $A$ and a $B^{op}$ dg-module.

(4) Let $B=\mathrm{DgEnd}_{A}(U)$. By \cite[Section 3.3]{br18}, if $U$ is an $n$-silting dg-$A$-module, then $B$ is weak non-positive.
\end{remark}
\textbf{We are ready to fix some notations which will be used in this paper.}
Let $k$ be a commutative ring and let $A$ be a dg-algebra, $U\in \mathbf{D}(A,d)$  and $B=\mathrm{DgEnd}_{A}(U)$. Set
\begin{align*}
    & \mathcal{E}:=\{X\in \mathbf{D}(B,d)\mid
   \mathrm{Hom}_{\mathbf{D}(B,d)}(\mathbb{R}\mathrm{Hom}_{A}(U,A),X[i])=0~\text{for all}~i\in \mathbb{Z} \};\\
 &\mathbf{G}:=-\otimes^{\mathbb{L}}_{B}U: \mathbf{D}(B,d)\rightarrow \mathbf{D}(A,d); \  \ \ \   \    \mathbf{H}:=\mathrm{\mathbb{R}Hom}_{A}(U,-):\mathbf{D}(A,d)\rightarrow \mathbf{D}(B,d);\\
 &\mathcal{Y}:=\mathrm{Ker}(\mathbf{G}); \ \ \ \ \ \ \  \ \ \ \mathcal{Z}:=\mathrm{Im}(\mathbf{H}).
\end{align*}

First, we recall the following result, see \cite[Theorem 2.4]{br18}.
\begin{theorem}\label{the3.2} Consider a good silting object $U \in \mathbf{D}(A, d)$, denote $\mathcal{K} = \mathrm{Ker}(-\otimes^{\mathbb{L}}_{B}U)$. Then there is an equivalence of categories
$$\mathrm{\mathbb{R}Hom}_{A}(U,-) : \mathbf{D}(A, d)\leftrightarrows\mathcal{ K}^{\perp} : -\otimes^{\mathbb{L}}_{B}U$$
and the dg-algebra $B$ is weak non-positive.
Finally, if $U$ is a small silting object, then $\mathcal{ K}^{\perp} = \mathbf{D}(B, d)$.
\end{theorem}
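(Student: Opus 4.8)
The plan is to recall the proof strategy from \cite{br18}, since this is their result, and organize it around the adjoint pair $(\mathbf{G},\mathbf{H}) = (-\otimes^{\mathbb L}_B U,\ \mathbb R\mathrm{Hom}_A(U,-))$. First I would observe that because $U$ is good, Remark~(3) applies: $U$ is cofibrant both as an $A$-module and as a $B^{op}$-module, so $\mathbf{H}(X) = \mathrm{Hom}^\bullet_A(U,X')$ and $\mathbf{G}(Y) = Y\otimes^\bullet_B U$ can be computed without further replacements, and in particular $\mathbf{G}(B) = B\otimes^\bullet_B U \cong U$ while $\mathbf{H}(U) = \mathbb R\mathrm{Hom}_A(U,U) \simeq B$. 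The unit $\eta: \mathrm{id}_{\mathbf{D}(A,d)} \Rightarrow \mathbf{H}\mathbf{G}$... wait, rather the counit $\varepsilon: \mathbf{G}\mathbf{H} \Rightarrow \mathrm{id}_{\mathbf{D}(A,d)}$ and the unit $\eta: \mathrm{id}_{\mathbf{D}(B,d)} \Rightarrow \mathbf{H}\mathbf{G}$ are the structure maps to analyze.

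The key step is to show the counit $\varepsilon_X: \mathbf{G}\mathbf{H}(X) \to X$ is an isomorphism for every $X \in \mathbf{D}(A,d)$; this is what makes $\mathbf{H}$ fully faithful and identifies $\mathbf{D}(A,d)$ with a full subcategory of $\mathbf{D}(B,d)$. One checks $\varepsilon_U$ is an isomorphism directly (both sides are $U$, and the map is the natural one), hence $\varepsilon_X$ is an isomorphism for all $X \in \mathrm{Tria}(U)$ since the full subcategory on which $\varepsilon$ is invertible is triangulated and closed under coproducts (using that $\mathbf{G}$ preserves coproducts as a left adjoint, and $\mathbf{H}$ preserves coproducts because $U$ is compact — here is where goodness, i.e. condition $(s3)$, enters decisively, cf.\ Theorem~\ref{the3.2} for small silting). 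Then condition $(S1)$ (equivalently $\dim_{\mathrm{Add}(A)} U < \infty$) forces $\mathrm{Tria}(U)$ to contain $A$, hence all of $\mathbf{D}(A,d) = \mathrm{Tria}(A)$, by an induction along the resolution triangles $X_{i+1}\to C_i\to X_i \rightsquigarrow$ with $C_i \in \mathrm{Add}(A)$. Conversely $\mathbf{G}$ kills $\mathcal{K} = \mathrm{Ker}(\mathbf{G})$ by definition, and $(S2)$ together with the adjunction shows $\mathbf{H}(X) \in \mathcal{K}^\perp$ for all $X$: indeed $\mathrm{Hom}_{\mathbf{D}(B,d)}(K,\mathbf{H}(X)[i]) \cong \mathrm{Hom}_{\mathbf{D}(A,d)}(\mathbf{G}(K),X[i]) = 0$ for $K \in \mathcal{K}$. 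So $\mathbf{H}$ lands in $\mathcal{K}^\perp$, and combined with full faithfulness and the fact that $\mathbf{G}$ restricted to $\mathcal{K}^\perp$ is inverse to $\mathbf{H}$ (because any $Y \in \mathcal{K}^\perp$ fits in a triangle $\mathbf{G}\mathbf{H}(Y)\ \text{or rather}\ \eta_Y: Y \to \mathbf{H}\mathbf{G}(Y)$ with cone in $\mathcal{K}$, and $\mathcal{K} \cap \mathcal{K}^\perp$ forces the cone to vanish), we get the claimed equivalence.

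For the weak non-positivity of $B$: since $B \simeq \mathbf{H}(U) = \mathbb R\mathrm{Hom}_A(U,U)$, we must show $H^i(\mathbb R\mathrm{Hom}_A(U,U)) = \mathrm{Hom}_{\mathbf{D}(A,d)}(U,U[i]) = 0$ for $i > 0$; but this is exactly condition $(S2)$ applied with $I$ a singleton, i.e.\ $U \in U^{\perp_{>0}}$, which gives Remark~(4). Finally, if $U$ is small silting, then $\mathbf{H}$ preserves all coproducts (by compactness of $U$), so its essential image $\mathcal{K}^\perp$ is a coproduct-closed triangulated subcategory of $\mathbf{D}(B,d)$ containing $\mathbf{H}(U) \simeq B$; hence $\mathcal{K}^\perp \supseteq \mathrm{Tria}(B) = \mathbf{D}(B,d)$, giving $\mathcal{K}^\perp = \mathbf{D}(B,d)$.

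The main obstacle I anticipate is making the coproduct-preservation of $\mathbf{H} = \mathbb R\mathrm{Hom}_A(U,-)$ precise and seeing exactly how goodness supplies it — this is the crux that separates the general silting case (where one only gets $\mathcal{K}^\perp$, not all of $\mathbf{D}(B,d)$) from the tilting-module picture; the resolution-dimension bookkeeping in $(S1)$ and the verification that $\varepsilon_U$ is the \emph{identity} map up to the canonical identifications are routine but need care with cofibrant replacements.
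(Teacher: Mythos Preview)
First, note that the paper does not actually prove Theorem~\ref{the3.2}; it is quoted from \cite[Theorem 2.4]{br18}. So there is no in-paper proof to compare against directly.

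Your sketch contains a genuine gap at the crucial step. You write that ``$\mathbf{H}$ preserves coproducts because $U$ is compact --- here is where goodness, i.e.\ condition $(s3)$, enters decisively.'' This is false: a good silting object satisfies $(S1)$, $(S2)$, $(s3)$, and $(S1)$ only says $\dim_{\mathrm{Add}(A)}U<\infty$, not $\dim_{\mathrm{add}(A)}U<\infty$. So $U$ need \emph{not} be compact in $\mathbf{D}(A,d)$, and $\mathbf{H}=\mathbb R\mathrm{Hom}_A(U,-)$ need not preserve coproducts. What $(s3)$ actually buys you is that $\mathbf{H}(A)$ is compact in $\mathbf{D}(B,d)$ (this is Lemma~\ref{lem3.2}) and, equivalently, that $_BU$ is compact in $\mathbf{D}(B^{op},d)$; it says nothing about compactness of $U_A$. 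As written, your argument therefore only establishes the small-silting case (where $(s1)$ holds and $U$ really is compact), which is exactly the final clause of the theorem, not the good-silting case.

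The standard repair is to change the order of the argument: rather than showing $\varepsilon_X$ is an isomorphism on $\mathrm{Tria}(U)$ and needing coproduct-preservation of $\mathbf H$, show that $\mathbf H$ is fully faithful by proving that
\[
\mathbb R\mathrm{Hom}_A(A,Y)\longrightarrow \mathbb R\mathrm{Hom}_B(\mathbf H(A),\mathbf H(Y))
\]
is a quasi-isomorphism for every $Y$. This is clear when $A$ is replaced by any $U_i\in\mathrm{add}(U)$ (since $\mathbf H(U_i)\in\mathrm{add}(B)$), and then the finite $\mathrm{add}(U)$-coresolution of $A$ coming from $(s3)$ gives it for $A$ itself by induction on the triangles. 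Now the class of $X$ for which $\mathrm{Hom}(X,Y[i])\to\mathrm{Hom}(\mathbf H(X),\mathbf H(Y)[i])$ is bijective for all $Y,i$ is triangulated and closed under coproducts (because $\mathrm{Hom}(-,Z)$ sends coproducts to products on both sides --- no compactness needed), and it contains $A$, hence equals $\mathrm{Tria}(A)=\mathbf{D}(A,d)$. Full faithfulness of $\mathbf H$ is equivalent to the counit $\varepsilon$ being invertible, and the rest of your argument (image of $\mathbf H$ equals $\mathcal K^\perp$, weak non-positivity of $B$ from $(S2)$, and the small case) then goes through as you wrote. There is also a minor slip: it is $(S3)$/$(s3)$ that gives $A\in\mathrm{Tria}(U)$, not $(S1)$; your triangles ``$X_{i+1}\to C_i\to X_i$ with $C_i\in\mathrm{Add}(A)$'' resolve $U$, not $A$.
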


Before giving our main result in this section, we need the following useful lemmas.  Denote  by $\langle \mathcal{C}\rangle$ the smallest triangulated category which contains $\mathcal{C}$.

\begin{lemma}\label{lem3.2} The dg-$B$-module $\mathbf{H}(A)$ is compact and cofibrant in $\mathbf{D}(B,d)$.
\end{lemma}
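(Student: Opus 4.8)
The plan is to prove the two assertions separately: compactness of $\mathbf{H}(A)=\mathbb{R}\mathrm{Hom}_A(U,A)$ in $\mathbf{D}(B,d)$, and then cofibrancy.

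For compactness, I would exploit the silting condition $(S1)$, namely $\dim_{\mathrm{Add}(A)}U<\infty$. Since $U$ is good (or more precisely, since we are working with the equivalence from Theorem~\ref{the3.2}), the functor $\mathbf{H}=\mathbb{R}\mathrm{Hom}_A(U,-)$ sends $A$ to $B$ (as $B=\mathrm{DgEnd}_A(U)$), and more generally sends $\mathrm{Add}(A)$ into $\mathrm{Add}(B)$, because $\mathbf{H}$ commutes with coproducts on the relevant objects (this commutation uses that $U$ is compact or that $U$ is good enough; in any case $\mathbb{R}\mathrm{Hom}_A(U,A^{(I)})\cong B^{(I)}$ when $U$ is cofibrant with the required smallness, cf.\ $(s1)$ in the good case). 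Now take a finite $\mathrm{Add}(A)$-resolution of $U$
$$U_{i+1}\to A_i\to U_i\rightsquigarrow,\qquad 0\le i\le n,$$
with $A_i\in\mathrm{Add}(A)$, $U_0=U$, $U_{n+1}=0$. Dualizing is not quite what we want; instead I would use a finite $\mathrm{Add}(U)$-coresolution of $A$, which exists by $(s3)$ in the good case: there are triangles
$$A_i\to U^i\to A_{i+1}\rightsquigarrow,\qquad 0\le i\le m,$$
with $U^i\in\mathrm{add}(U)$, $A_0=A$, $A_{m+1}=0$. Applying the triangle functor $\mathbf{H}$ to these triangles and using $\mathbf{H}(\mathrm{add}(U))\subseteq\mathrm{add}(B)$ together with $\mathbf{H}(A)\in\langle\mathrm{add}(B)\rangle$ built up from finitely many triangles, we conclude $\mathbf{H}(A)$ lies in the thick subcategory generated by $B$ in $\mathbf{D}(B,d)$, hence is compact.

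For cofibrancy, the cleanest route is to observe that $\mathbf{H}(A)=\mathbb{R}\mathrm{Hom}_A(U,A)\cong \mathrm{Hom}^{\bullet}_A(U,A)$ where $U$ is taken to be a cofibrant dg-$A$-module (possible since a good silting object is cofibrant as an $A$-module, Remark~(3)), and $A$ itself is the natural choice with no fibrant replacement needed if $U$ is cofibrant; alternatively, one argues abstractly: a compact object in $\mathbf{D}(B,d)$ belongs to $\langle B\rangle$, the thick subcategory generated by the free module $B$, and $B$ is cofibrant, and the class of cofibrant dg-modules is closed under shifts, cones, and direct summands up to homotopy equivalence; hence every object of $\langle B\rangle$ has a cofibrant representative, and in fact any object of $\mathbf{D}(B,d)$ isomorphic in the derived category to such a bounded iterated cone of copies of $B$ is already cofibrant. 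So the compactness argument simultaneously yields cofibrancy, provided one tracks representatives rather than mere isomorphism classes.

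The main obstacle I anticipate is the bookkeeping needed to pass the finite resolutions through the functor $\mathbf{H}$ while controlling both compactness \emph{and} cofibrancy on the nose: one must check that $\mathbf{H}$ really does commute with the relevant (finite) coproducts and send $\mathrm{add}(U)$ to $\mathrm{add}(B)$, which rests on $U$ being good (so that condition $(s3)$, and hence the \emph{small} add-coresolution, is available) and on $U$ being cofibrant as an $A$-module so that $\mathbb{R}\mathrm{Hom}_A(U,-)$ is computed by the honest dg-Hom. Once these compatibilities are in hand, the triangle-by-triangle induction is routine.
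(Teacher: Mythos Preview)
Your proposal is correct and follows essentially the same route as the paper: use the $\mathrm{add}(U)$-coresolution of $A$ from $(s3)$, apply $\mathbf{H}$, observe $\mathbf{H}(\mathrm{add}(U))\subseteq\mathrm{add}(B)$, and conclude $\mathbf{H}(A)\in\langle\mathrm{add}(B)\rangle$, whence compactness and cofibrancy. Your opening detour through $(S1)$ and $\mathrm{Add}(A)$-resolutions is unnecessary (and you rightly abandon it); the paper goes straight to $(s3)$ and then cites a reference for the fact that the compact objects form a thick subcategory containing $B$, whereas you spell out the closure-under-cones-and-summands argument for cofibrancy in slightly more detail.
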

\begin{proof}Since $\mathrm{codim}_{\mathrm{add}(U)}A\leq n$, there is a sequence of triangles in $\mathbf{D}(A,d)$
 $$A_{i}\longrightarrow U_{i}\longrightarrow A_{i+1}\rightsquigarrow~~ \text{with}~~ 0\leq i\leq n, \eqno(3.1)$$
 such that $U_{i}\in \mathrm{add}(U)$, $A_{0}=A$ and $A_{i+1}=0$. Applying the triangle functor $\mathbf{H}$ to $(3.1)$, we get a sequence of triangles:
$$\mathbf{H}(A_{i})\rightarrow \mathbf{H}(U_{i})\rightarrow \mathbf{H}(A_{i+1})\rightsquigarrow ~~ \text{with}~~ 0\leq i\leq n. $$
 Since $\mathbf{H}(U_{i})\in \mathrm{add}(B)$, we have $\mathrm{codim}_{\mathrm{add}(B)}\mathbf{H}(A)\leq n$ and $\mathbf{H}(A)\in \langle\mathrm{add}(B)\rangle$. According \cite[Lemmas 21, 22]{mu06}, $\mathbf{D}(B,d)$ is compactly generated and the compact objects in $\mathbf{D}(B,d)$ form a thick and $\aleph_{0}$-localising subcategory. Then it is easy to see that $\mathbf{H}(A)$ is a compact and cofibrant object in $\mathbf{D}(B,d)$.
\end{proof}

\begin{lemma}\label{lem3.3a} Let $X$ and $Y$ be two dg-$A$-modules. If $X=A_{A}$ or $X\in \mathrm{add}(U_{A})$, then there is a quasi-isomorphism of dg-$k$-modules
$$\mathbb{R}\mathrm{Hom}_{A}(X,Y)\overset{\cong}{\rightarrow}
\mathbb{R}\mathrm{Hom}_{B}(\mathbb{R}\mathrm{Hom}_{A}(U,X),\mathbb{R}\mathrm{Hom}_{A}(U,Y)).$$
\end{lemma}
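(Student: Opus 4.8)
The plan is to exhibit the natural evaluation morphism and then check it is a quasi-isomorphism by reducing to the two stated cases. First I would recall that for any dg-$B$-$A$-bimodule $U$ and dg-$A$-modules $X,Y$ there is a natural (chain level, and hence derived) morphism of dg-$k$-modules
$$\varphi_{X,Y}:\mathbb{R}\mathrm{Hom}_{A}(X,Y)\longrightarrow \mathbb{R}\mathrm{Hom}_{B}\bigl(\mathbb{R}\mathrm{Hom}_{A}(U,X),\mathbb{R}\mathrm{Hom}_{A}(U,Y)\bigr),$$
induced by applying the functor $\mathbf{H}=\mathbb{R}\mathrm{Hom}_{A}(U,-)$ to morphisms and using functoriality; concretely one takes cofibrant/fibrant replacements and uses the chain map $f\mapsto \mathrm{Hom}^{\bullet}_{A}(U,f)$. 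The morphism $\varphi_{X,Y}$ is natural in both variables, and both sides are triangle functors (in each variable separately) that commute with the shift and send triangles to triangles.

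Next I would verify the claim for $X=A_{A}$. In this case $\mathbb{R}\mathrm{Hom}_{A}(A,Y)\cong Y$ and $\mathbb{R}\mathrm{Hom}_{A}(U,A)$ — call it $U^{\vee}$ — is, by Remark 3.1(3) and Lemma 3.3, a compact cofibrant dg-$B$-module; moreover the counit/unit of the adjunction $(\mathbf{G},\mathbf{H})$ together with the silting hypothesis gives that $\mathbf{H}$ is fully faithful on the relevant subcategory (Theorem 3.3), so that $\mathbb{R}\mathrm{Hom}_{B}\bigl(\mathbf{H}(A),\mathbf{H}(Y)\bigr)\cong \mathbb{R}\mathrm{Hom}_{A}(A,Y)\cong Y$ via $\varphi_{A,Y}$. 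The point is that $\mathbf{H}$ restricted to $\mathrm{Add}(A)$ (indeed to $\langle\mathrm{add}(A)\rangle$) is fully faithful with image in $\langle\mathrm{add}(B)\rangle$, because $\mathbf{H}(A)$ generates a copy of the relevant subcategory and $\mathrm{Hom}$-spaces are preserved — this is where Theorem 3.3 and Lemma 3.3 do the real work. For $X\in\mathrm{add}(U_{A})$ the statement is essentially the definition of $B=\mathrm{DgEnd}_{A}(U)$: $\mathbf{H}(U)\cong B$ as a dg-$B$-module (using that $U$ is cofibrant as an $A$-module, Remark 3.1(3)), so $\mathbb{R}\mathrm{Hom}_{B}(\mathbf{H}(U),\mathbf{H}(Y))\cong \mathbf{H}(Y)=\mathbb{R}\mathrm{Hom}_{A}(U,Y)$, and $\varphi_{U,Y}$ is this identification; the case of a general object of $\mathrm{add}(U_{A})$ follows since both sides are additive in $X$ and send finite direct sums and summands to the corresponding ones, and $\varphi$ is natural.

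I expect the main obstacle to be the bookkeeping needed to identify $\varphi_{X,Y}$ with these canonical isomorphisms in a way that is genuinely functorial at the derived level — i.e., keeping track of cofibrant replacements of $U$ (as an $A$- and as a $B^{op}$-module) and fibrant replacements of $Y$ so that all the maps compose correctly, and checking the sign conventions in $\mathrm{Hom}^{\bullet}$ and $\otimes^{\bullet}$. Once the two base cases $X=A_A$ and $X\in\mathrm{add}(U_A)$ are settled, there is nothing further to do for the stated lemma (it is not claimed for all $X$); but if one wished to extend it, one would note that the class of $X$ for which $\varphi_{X,Y}$ is a quasi-isomorphism is closed under shifts, cones and direct summands, hence contains $\langle\mathrm{add}(U)\rangle$ and, via the resolution $(3.1)$, also objects with finite $\mathrm{add}(U)$-coresolution dimension such as $A$ itself — giving an alternative route to the $X=A_A$ case that only uses the $\mathrm{add}(U)$ case plus dévissage.
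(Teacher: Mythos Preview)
Your proposal is correct and follows essentially the same approach as the paper: construct the natural chain-level map using cofibrancy of $U$ (and of $\mathbf{H}(A)$, via Lemma~\ref{lem3.2}), then verify it is a quasi-isomorphism using the full faithfulness of $\mathbf{H}$ from Theorem~\ref{the3.2}.

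One minor remark on organisation. You treat the two cases $X=A$ and $X\in\mathrm{add}(U)$ as requiring separate arguments for the quasi-isomorphism step (direct computation $\mathbf{H}(U)\cong B$ for the second, full faithfulness for the first). The paper is slightly more streamlined: the case distinction is used \emph{only} in the construction step, to ensure that $\mathbb{R}\mathrm{Hom}_{A}(U,X)$ is cofibrant over $B$ so that no further replacement is needed on the target side. Once the chain-level map $\alpha\colon h\mapsto[g\mapsto h\circ g]$ is in hand, the quasi-isomorphism check is done \emph{uniformly} for both cases by passing to cohomology and invoking the full faithfulness of $\mathbf{H}$ on all of $\mathbf{D}(A,d)$ (not just on $\langle\mathrm{add}(A)\rangle$), namely
\[
H^{i}\mathbb{R}\mathrm{Hom}_{A}(X,Y)\cong\mathrm{Hom}_{\mathbf{D}(A,d)}(X,Y[i])\cong\mathrm{Hom}_{\mathbf{D}(B,d)}(\mathbf{H}(X),\mathbf{H}(Y)[i]).
\]
Your d\'evissage alternative for $X=A$ (reduce to $X\in\mathrm{add}(U)$ via the coresolution $(3.1)$) also works and is a valid route that avoids citing Theorem~\ref{the3.2} for that case, though it is longer than simply invoking full faithfulness once.
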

\begin{proof}In the first step we want to define a natural map
$$\alpha:\mathbb{R}\mathrm{Hom}_{A}(X,Y)\longrightarrow
\mathbb{R}\mathrm{Hom}_{B}(\mathbb{R}\mathrm{Hom}_{A}(U,X),\mathbb{R}\mathrm{Hom}_{A}(U,Y))$$
in $\mathbf{D}(B,d)$ as follows: Since $U$ is cofibrant, so is $X$. We obtain
$$\mathbb{R}\mathrm{Hom}_{A}(X,Y)=\mathrm{Hom}^{\bullet}_{A}(X,Y)=\bigoplus_{n\in \mathbb{Z}}\mathrm{Hom}^{n}_{A}(X,Y).$$
If $X=A$, by Lemma \ref{lem3.2}, $\mathbb{R}\mathrm{Hom}_{A}(U,A)\in \mathrm{add}(B)$ is a cofibrant dg-$B$-module.  If $X\in \mathrm{add}(U_{A})$, then $\mathbb{R}\mathrm{Hom}_{A}(U,X)\in \mathrm{add}(B)$ is a cofibrant dg-$B$-module. So in the both case, we have
$$\mathbb{R}\mathrm{Hom}_{B}(\mathbb{R}\mathrm{Hom}_{A}(U,X),\mathbb{R}\mathrm{Hom}_{A}(U,Y))=
\mathrm{Hom}^{\bullet}_{B}(\mathrm{Hom}^{\bullet}_{A}(U,X),\mathrm{Hom}^{\bullet}_{A}(U,Y)).$$
It is straightforward to check that the assignment $\alpha:h\mapsto[g \mapsto h \circ g]$ defines a map from $\mathrm{Hom}^{\bullet}_{A}(X,Y)$ to $\mathrm{Hom}^{\bullet}_{B}(\mathrm{Hom}^{\bullet}_{A}(U,X),\mathrm{Hom}^{\bullet}_{A}(U,Y)).$

For the second step, we show that $\alpha$ is a quasi-isomorphism. By Theorem \ref{the3.2}, the functor
$\mathbb{R}\mathrm{Hom}_{A}(U,-)$ is fully faithful. Then our claim follows from the isomorphisms
\begin{align*}
H^{i}\mathbb{R}\mathrm{Hom}_{A}(X,Y)\cong \mathrm{Hom}_{\mathbf{D}(A,d)}(X,Y[i])& \cong \mathrm{Hom}_{\mathbf{D}(B,d)}(\mathbb{R}\mathrm{Hom}_{A}(U,X),\mathbb{R}\mathrm{Hom}_{A}(U,Y)[i])\\
&\cong H^{i}\mathbb{R}\mathrm{Hom}_{B}(\mathbb{R}\mathrm{Hom}_{A}(U,X),\mathbb{R}\mathrm{Hom}_{A}(U,Y))
\end{align*}
for all $i\in \mathbb{Z}$.
\end{proof}
\begin{lemma}\label{lem3.3} The pair $\mathcal{(Y,Z)}$ is a torsion pair in $\mathbf{D}(B,d)$. Moreover, $\mathcal{Y}=\mathcal{E}$.
\end{lemma}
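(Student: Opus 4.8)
We must prove two things: that $(\mathcal{Y},\mathcal{Z})$ is a torsion pair in $\mathbf{D}(B,d)$, and that $\mathcal{Y}=\mathcal{E}$. For the torsion pair, recall $\mathbf{G}=-\otimes^{\mathbb{L}}_B U$ is left adjoint to $\mathbf{H}=\mathbb{R}\mathrm{Hom}_A(U,-)$, and by Theorem \ref{the3.2} the functor $\mathbf{H}$ is fully faithful with essential image $\mathcal{Z}=\mathcal{K}^{\perp}$, where $\mathcal{K}=\mathcal{Y}=\mathrm{Ker}(\mathbf{G})$. So the adjunction $(\mathbf{G},\mathbf{H})$ restricts to an equivalence $\mathbf{D}(A,d)\simeq\mathcal{Z}$, and $\mathbf{H}\mathbf{G}$ is (up to this equivalence) a coreflection onto $\mathcal{Z}$. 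First I would verify conditions (1)--(3) of the definition of a torsion pair. Condition (2) (closure of $\mathcal{Y}$ under $[1]$ and $\mathcal{Z}$ under $[-1]$) is immediate since both are defined by vanishing of $\mathrm{Hom}$'s in all degrees, hence are in fact triangulated subcategories. Condition (1), $\mathrm{Hom}_{\mathbf{D}(B,d)}(\mathcal{Y},\mathcal{Z})=0$: given $Y\in\mathcal{Y}$ and $\mathbf{H}(X)\in\mathcal{Z}$, adjunction gives $\mathrm{Hom}_{\mathbf{D}(B,d)}(Y,\mathbf{H}(X))\cong\mathrm{Hom}_{\mathbf{D}(A,d)}(\mathbf{G}(Y),X)=\mathrm{Hom}_{\mathbf{D}(A,d)}(0,X)=0$.

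For condition (3), I would take any $N\in\mathbf{D}(B,d)$ and use the counit $\varepsilon_N\colon \mathbf{G}\mathbf{H}\,?$ — more precisely, the unit of the adjunction $\eta_N\colon N\to\mathbf{H}\mathbf{G}(N)$. Complete $\eta_N$ to a triangle $Y^N\to N\xrightarrow{\eta_N}\mathbf{H}\mathbf{G}(N)\rightsquigarrow$ in $\mathbf{D}(B,d)$. Then $\mathbf{H}\mathbf{G}(N)\in\mathcal{Z}$ by definition. To see $Y^N\in\mathcal{Y}$, apply $\mathbf{G}$ to the triangle: since $\mathbf{G}$ is a triangle functor, $\mathbf{G}(Y^N)\to\mathbf{G}(N)\xrightarrow{\mathbf{G}(\eta_N)}\mathbf{G}\mathbf{H}\mathbf{G}(N)\rightsquigarrow$ is a triangle, and by a standard adjunction/triangular identity $\mathbf{G}(\eta_N)$ is a split monomorphism (it has retraction $\varepsilon_{\mathbf{G}(N)}$), in fact an isomorphism because $\mathbf{H}$ is fully faithful so the counit $\varepsilon$ is an isomorphism; hence $\mathbf{G}(Y^N)=0$, i.e. $Y^N\in\mathcal{Y}$. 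This gives the required torsion triangle. (Alternatively one can invoke the general fact that a fully faithful right adjoint with triangulated essential image produces a torsion pair whose torsion-free class is that image; but spelling it out as above is cleaner.)

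For the equality $\mathcal{Y}=\mathcal{E}$: by definition $\mathcal{E}=\{X\in\mathbf{D}(B,d)\mid \mathrm{Hom}_{\mathbf{D}(B,d)}(\mathbf{H}(A),X[i])=0\ \forall i\}$, i.e. $\mathcal{E}=\mathbf{H}(A)^{\perp}$. For any $X\in\mathbf{D}(B,d)$, the adjunction $(\mathbf{G},\mathbf{H})$ together with the canonical isomorphism $\mathbf{G}(\mathbf{H}(A))=\mathbb{R}\mathrm{Hom}_A(U,A)\otimes^{\mathbb{L}}_B U\cong A$ (this is part of the content of Theorem \ref{the3.2} / \cite{br18}: the counit $\mathbf{G}\mathbf{H}(A)\to A$ is an isomorphism since $A\in\mathcal{Z}$, as $A=\mathbf{H}$ applied to something, or directly since $\mathbf{H}$ is fully faithful) gives
\[
\mathrm{Hom}_{\mathbf{D}(B,d)}(\mathbf{H}(A),X[i])\cong \mathrm{Hom}_{\mathbf{D}(A,d)}(A,\mathbf{G}(X)[i])\cong H^i(\mathbf{G}(X)).
\]
Hence $X\in\mathcal{E}$ iff $H^i(\mathbf{G}(X))=0$ for all $i$, iff $\mathbf{G}(X)=0$ in $\mathbf{D}(A,d)$, iff $X\in\mathrm{Ker}(\mathbf{G})=\mathcal{Y}$. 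So $\mathcal{Y}=\mathcal{E}$.

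The main obstacle is making sure the isomorphism $\mathbf{G}\mathbf{H}(A)\cong A$ (equivalently $A\in\mathcal{Z}$) is legitimately available; this is where goodness of $U$ and Theorem \ref{the3.2} are genuinely used, and one should check that $A$ lies in the essential image $\mathcal{Z}=\mathcal{K}^{\perp}$ — equivalently that $A\in\mathcal{Y}^{\perp}$, i.e. $\mathrm{Hom}_{\mathbf{D}(A,d)}(\mathbf{G}(Y),A[i])=0$ whenever $\mathbf{G}(Y)=0$, which is automatic. Everything else is formal manipulation of the adjunction $(\mathbf{G},\mathbf{H})$ and the triangulated structure; no hard computation is needed, and Lemma \ref{lem3.2}, which identifies $\mathbf{H}(A)$ as a compact cofibrant generator-type object, is what makes the orthogonality class $\mathcal{E}$ tractable.
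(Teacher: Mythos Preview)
Your argument for the torsion pair is essentially the paper's: both complete the unit $\eta_N:N\to\mathbf{H}\mathbf{G}(N)$ to a triangle, apply $\mathbf{G}$, and use invertibility of the counit (from Theorem~\ref{the3.2}) to conclude the cone lies in $\mathcal{Y}$.

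Your proof of $\mathcal{Y}=\mathcal{E}$, however, has a genuine gap. You claim the adjunction $(\mathbf{G},\mathbf{H})$ together with $\mathbf{G}\mathbf{H}(A)\cong A$ yields
\[
\mathrm{Hom}_{\mathbf{D}(B,d)}(\mathbf{H}(A),X[i])\;\cong\;\mathrm{Hom}_{\mathbf{D}(A,d)}(A,\mathbf{G}(X)[i]),
\]
but the adjunction runs the wrong way for this. Since $\mathbf{G}$ is the \emph{left} adjoint, it gives $\mathrm{Hom}_{\mathbf{D}(B,d)}(Y,\mathbf{H}(Z))\cong\mathrm{Hom}_{\mathbf{D}(A,d)}(\mathbf{G}(Y),Z)$; substituting $Y=\mathbf{H}(A)$ produces an isomorphism only when the target has the form $\mathbf{H}(Z)$, not for an arbitrary $X\in\mathbf{D}(B,d)$. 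What you are really asserting is that $\mathbf{G}\cong\mathbb{R}\mathrm{Hom}_B(\mathbf{H}(A),-)$, equivalently that $\mathbf{G}$ admits a left adjoint sending $A\mapsto\mathbf{H}(A)$. That is true, but it is exactly the nontrivial content here and does not follow formally from the pair $(\mathbf{G},\mathbf{H})$. (Trying to repair this via the torsion triangle $Y^X\to X\to\mathbf{H}\mathbf{G}(X)\rightsquigarrow$ is circular: one would need $\mathrm{Hom}(\mathbf{H}(A),Y^X[i])=0$, i.e.\ $\mathcal{Y}\subseteq\mathcal{E}$, which is part of the claim.)

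The paper fills this gap by an explicit comparison argument that genuinely uses the \emph{good} hypothesis. From the $\mathrm{add}(U)$-coresolution of $A$ (triangles $A_i\to U_i\to A_{i+1}\rightsquigarrow$) one applies $\mathbb{R}\mathrm{Hom}_A(-,U)$ to obtain triangles in $\mathbf{D}(B^{op},d)$ with terms in $\mathrm{add}(B)$; then one compares, via Lemma~\ref{lem3.3a} and compactness of $\mathbf{H}(A)$ (Lemma~\ref{lem3.2}), the two towers $X\otimes_B^{\mathbb{L}}(-)$ and $\mathbb{R}\mathrm{Hom}_B(\mathbf{H}(-),X)$, arriving at the natural isomorphism $X\otimes_B^{\mathbb{L}}U\cong\mathbb{R}\mathrm{Hom}_B(\mathbf{H}(A),X)$. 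Taking cohomology then gives $\mathcal{Y}=\mathcal{E}$. Note that your formal argument never uses the finite $\mathrm{add}(U)$-coresolution of $A$; that omission is another signal that the step cannot be purely formal.
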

\begin{proof}In fact, for any object $Y\in\mathcal{Y}$ and $W\in \mathbf{D}(A,d)$, we have $\mathrm{Hom}_{\mathbf{D}(B,d)}(Y,\mathbb{R}\mathrm{Hom}_{A}(U,W))\cong\mathrm{Hom}_{\mathbf{D}(A,d)}(Y\otimes^{\mathbb{L}}_{B}U,W)
=\mathrm{Hom}_{\mathbf{D}(A,d)}(0,W)=0$ because the pair $(-\otimes^{\mathbb{L}}_{B}U,\mathbb{R}\mathrm{Hom}_{A}(U,-))$ is an adjoint pair of triangle functors. This shows $\mathrm{Hom}_{\mathbf{D}(B,d)}(\mathcal{Y,Z})=0$. Let $\eta:\mathrm{Id}_{\mathbf{D}(B,d)}\rightarrow HG$ be the unit adjunction, and let $\varepsilon:GH\rightarrow \mathrm{Id}_{\mathbf{D}(A,d)}$ be the counit adjunction. By Theorem \ref{the3.2}, we know that $\varepsilon$ is invertible. Then for any $M\in \mathbf{D}(B,d)$, the canonical morphism $\eta_{M}:M\rightarrow \mathbf{HG}(M)$ can be extended to a triangle in $\mathbf{D}(B,d)$:
$$M\stackrel{\eta_{M}}\longrightarrow \mathbf{HG}(M)\longrightarrow N\rightsquigarrow.$$
By applying the functor $\mathbf{G}$ to the above triangle, we obtain a triangle in $\mathbf{D}(A,d)$:
$$\mathbf{G}(M)\stackrel{\mathbf{G}(\eta_{M})}\longrightarrow \mathbf{GHG}(M)\longrightarrow \mathbf{G}(N)\rightsquigarrow.$$
Since $\varepsilon$ is invertible, we see that $\mathbf{G}(\eta_{M})$ is an isomorphism. This shows $\mathbf{G}(N)=0$, that is $N\in \mathcal{Y}$. Furthermore, since $\mathcal{Y}$ is a triangulated subcategory of $\mathbf{D}(B,d)$, we have $N[-1]\in\mathcal{Y}$. Then the following triangle
$$N[-1]\longrightarrow M\longrightarrow \mathbf{HG}(M)\rightsquigarrow$$
in $\mathbf{D}(B,d)$ with $\mathbf{HG}(M)\in \mathcal{Z}$ shows that $\mathcal{(Y,Z)}$ is a torsion pair.

 In the following, we shall prove that $\mathcal{Y=E}$.  Applying the exact functor $\mathbb{R}\mathrm{Hom}_{A}(-,U)$ to (3.1), we get a sequence of triangles in $\mathbf{D}(B^{op},d)$ of the form
 $$V_{i+1}\longrightarrow B_{i}\longrightarrow V_{i}\rightsquigarrow~~ \text{with}~~ 0\leq i\leq n, $$
 such that $B_{i}=\mathbb{R}\mathrm{Hom}_{A}(U_{i},U)\in \mathrm{add}(B)$, $V_{0}=U$ and $V_{n+1}=0$. For any $X\in \mathbf{D}(B,d)$, applying the exact functor $X\otimes^{\mathbb{L}}_{B}-$, we get  a sequence of triangles
  $$X\otimes^{\mathbb{L}}_{B}V_{i+1}\longrightarrow X\otimes^{\mathbb{L}}_{B}B_{i}\longrightarrow X\otimes^{\mathbb{L}}_{B}V_{i}\rightsquigarrow~~ \text{with}~~ 0\leq i\leq n. \eqno(3.2)$$
  Note that $X\otimes^{\mathbb{L}}_{B}\mathbb{R}\mathrm{Hom}_{B}(B,B)\cong \mathbb{R}\mathrm{Hom}_{B}(B,X)$. Because  $U_{i}\in \mathrm{add}(U)$, it is easy to see  $\mathbb{R}\mathrm{Hom}_{A}(U,U_{i})\in \mathrm{add}(B)$. Thus we deduce the existence of natural isomorphisms in $\mathbf{D}(A,d)$ by \cite[Lemma 1.2]{br18}:
    $$X\otimes^{\mathbb{L}}_{B}\mathbb{R}\mathrm{Hom}_{B}(\mathbb{R}\mathrm{Hom}_{A}(U,U_{i}),B)\cong \mathbb{R}\mathrm{Hom}_{B}( \mathbb{R}\mathrm{Hom}_{A}(U,U_{i}),X).$$
 By Lemma \ref{lem3.3a}, it is easy to check that $\mathbb{R}\mathrm{Hom}_{B}( \mathbb{R}\mathrm{Hom}_{A}(U,U_{i}),B)\cong \mathbb{R}\mathrm{Hom}_{A}(U_{i},U)=B_{i}$. Therefore, we have natural isomorphisms  $X\otimes^{\mathbb{L}}_{B}B_{i}\cong\mathbb{R}\mathrm{Hom}_{B}(\mathbf{H}(U_{i}),X)$.

 On the other hand, applying the exact functor $\mathbf{H}$ and $\mathbb{R}\mathrm{Hom}_{B}(-,X)$ to ($3.1$), we get  a sequence of triangles:
 $$\mathbb{R}\mathrm{Hom}_{B}(\mathbf{H}(A_{i+1}),X)\rightarrow \mathbb{R}\mathrm{Hom}_{B}(\mathbf{H}(U_{i}),X)\rightarrow \mathbb{R}\mathrm{Hom}_{B}(\mathbf{H}(A_{i}),X)\rightsquigarrow \eqno(3.3)$$
 with $0\leq i \leq n$. Compare the triangles in $(3.2)$ with $(3.3)$, we see that there exist natural isomorphisms $X\otimes^{\mathbb{L}}_{B}V_{i}\cong \mathbb{R}\mathrm{Hom}_{B}(\mathbf{H}(A_{i}),X)$ for all $0\leq i \leq n$. For $n=0$, we have $X\otimes^{\mathbb{L}}_{B}U\cong \mathbb{R}\mathrm{Hom}_{B}(\mathbf{H}(A),X)$. Note that $H^{i}(\mathbb{R}\mathrm{Hom}_{B}(\mathbf{H}(A),X))\cong \mathrm{Hom}_{\mathbf{D}(B,d)}(\mathbb{R}\mathrm{Hom}_{A}(U,A),X[i])$ for any $i \in \mathbb{Z}$. Thus $\mathcal{Y=E}$ follows.
\end{proof}

\begin{lemma} \label{lem3.4}Let $U$ be a good silting object
 in $\mathbf{D}(A,d)$. Denote $Q:=\mathbf{H}(A)$. Then the triple $(\mathrm{Tria}(Q), \mathrm{Ker}(G), \mathrm{Im}(H))$ is a $\mathrm{TTF}$ triple in $\mathbf{D}(B,d)$.
\end{lemma}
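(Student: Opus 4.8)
\emph{Proof proposal.} The plan is to reduce the statement to Lemma~\ref{lem3.3} together with a standard Bousfield-localization argument for the compact object $Q$. Recall that a $\mathrm{TTF}$ triple $(\mathcal{U},\mathcal{V},\mathcal{W})$ is precisely a pair of torsion pairs, namely $(\mathcal{U},\mathcal{V})$ and $(\mathcal{V},\mathcal{W})$. Since Lemma~\ref{lem3.3} already gives that $(\mathcal{Y},\mathcal{Z})=(\mathrm{Ker}(\mathbf{G}),\mathrm{Im}(\mathbf{H}))$ is a torsion pair in $\mathbf{D}(B,d)$, the only thing left to prove is that $(\mathrm{Tria}(Q),\mathrm{Ker}(\mathbf{G}))$ is a torsion pair.

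The first step is to identify $\mathrm{Ker}(\mathbf{G})$ with $Q^{\perp}$. By definition $Q=\mathbf{H}(A)=\mathbb{R}\mathrm{Hom}_{A}(U,A)$, so the subcategory $\mathcal{E}$ fixed above is exactly $Q^{\perp}$ (perpendicular taken with respect to all shifts), and Lemma~\ref{lem3.3} asserts $\mathcal{Y}=\mathcal{E}$; hence $\mathrm{Ker}(\mathbf{G})=Q^{\perp}$. Next I would check $\mathrm{Tria}(Q)^{\perp}=Q^{\perp}$: the inclusion ``$\subseteq$'' is immediate since $Q\in\mathrm{Tria}(Q)$, and for ``$\supseteq$'' note that for a fixed $X\in Q^{\perp}$ the full subcategory $\{T\mid \mathrm{Hom}_{\mathbf{D}(B,d)}(T,X[i])=0\text{ for all }i\in\mathbb{Z}\}$ is triangulated and closed under small coproducts (because $\mathrm{Hom}_{\mathbf{D}(B,d)}(-,X[i])$ sends coproducts to products) and contains $Q$, hence contains $\mathrm{Tria}(Q)$.

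It then remains to verify the torsion-pair axioms for $(\mathrm{Tria}(Q),Q^{\perp})$. Orthogonality $\mathrm{Hom}_{\mathbf{D}(B,d)}(\mathrm{Tria}(Q),Q^{\perp})=0$ and the closure conditions $\mathrm{Tria}(Q)[1]\subseteq\mathrm{Tria}(Q)$, $Q^{\perp}[-1]\subseteq Q^{\perp}$ are formal. The substantive point is to produce, for each $M\in\mathbf{D}(B,d)$, a triangle $M'\to M\to M''\rightsquigarrow$ with $M'\in\mathrm{Tria}(Q)$ and $M''\in Q^{\perp}=\mathrm{Tria}(Q)^{\perp}$; this is exactly where the compactness of $Q$ from Lemma~\ref{lem3.2} is used. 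Since $\mathbf{D}(B,d)$ is compactly generated (as already invoked via \cite{mu06} in the proof of Lemma~\ref{lem3.2}), the localizing subcategory $\mathrm{Tria}(Q)$ generated by the single compact object $Q$ is again compactly generated, hence satisfies Brown representability, so the inclusion $\mathrm{Tria}(Q)\hookrightarrow\mathbf{D}(B,d)$ admits a right adjoint; applying the counit of that adjunction to $M$ and completing to a triangle yields the required decomposition (equivalently, one may quote Neeman's localization theorem directly). I expect this invocation of Bousfield localization for $\mathrm{Tria}(Q)$ to be the only non-formal step; once it is in place, combining it with Lemma~\ref{lem3.3} shows at once that $(\mathrm{Tria}(Q),\mathrm{Ker}(\mathbf{G}),\mathrm{Im}(\mathbf{H}))$ is a $\mathrm{TTF}$ triple in $\mathbf{D}(B,d)$.
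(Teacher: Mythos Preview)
Your argument is correct and, in fact, more streamlined than the paper's. Both proofs reduce via Lemma~\ref{lem3.3} to showing that $(\mathrm{Tria}(Q),\mathrm{Ker}(\mathbf{G}))$ is a torsion pair, and both use the compactness of $Q$ from Lemma~\ref{lem3.2} as the essential input. The difference is organizational: you go directly, observing that $\mathrm{Ker}(\mathbf{G})=\mathcal{E}=Q^{\perp}=\mathrm{Tria}(Q)^{\perp}$ and then invoking Bousfield localization (Brown representability for the compactly generated localizing subcategory $\mathrm{Tria}(Q)$) to obtain the decomposition triangles. The paper instead first appeals to \cite[Proposition~5.14]{be04} to produce an abstract $\mathrm{TTF}$ triple $(\mathcal{X},\mathcal{Y},\mathcal{Z})$ with middle term $\mathcal{Y}$ (this requires the extra observation that $\mathcal{Y}$ is closed under coproducts), separately cites \cite{be07} for the torsion pair $(\mathrm{Tria}(Q),\mathcal{Y}')$, and then identifies $\mathcal{X}=\mathrm{Tria}(Q)$ by transporting the equivalence $\mathbf{H}:\mathbf{D}(A,d)\to\mathcal{Z}$ across the $\mathrm{TTF}$ to an equivalence $\mathbf{D}(A,d)\to\mathcal{X}$ sending $A$ to $Q$. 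Your route sidesteps both the abstract $\mathrm{TTF}$ existence result and the equivalence argument; the paper's route, while longer, makes the identification $\mathcal{X}=\mathrm{Tria}(Q)$ a consequence of the equivalence $\mathbf{D}(A,d)\simeq\mathcal{X}$, which is perhaps conceptually suggestive for the recollement constructed immediately afterwards in Theorem~\ref{the3.5}.
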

\begin{proof} By Lemma \ref{lem3.3}, we only need to show that $(\mathrm{Tria}(Q), \mathrm{Ker}(G))$ is a torsion pair. From Lemma \ref{lem3.2}, we get that $\mathbb{R}\mathrm{Hom}_{A}(U,X)\in \mathrm{add}(B)$ is a compact dg-$B$-module. Therefore, the subcategory $\mathcal{Y}=\{X\in \mathbf{D}(B,d)\mid
   \mathrm{Hom}_{\mathbf{D}(B,d)}(\mathbb{R}\mathrm{Hom}_{A}(U,A),X[i])=0~\text{for all}~i\in \mathbb{Z} \}$ is closed under direct sums and products. Thus, the existence of the TTF triple $\mathcal{(X,Y,Z)}$ in $\mathbf{D}(B,d)$ follows straightforward from \cite[Proposition 5.14]{be04}. Moreover, $\mathcal{X}=\mathrm{Ker}(\mathrm{Hom}_{\mathbf{D}(B,d)}(-,\mathcal{Y}))$ and $\mathcal{Y}=\mathrm{Ker}(\mathrm{Hom}_{\mathbf{D}(B,d)}(\mathcal{X},-)$.

   Next we shall prove $\mathcal{X}=\mathrm{Tria}(Q)$. Let $\mathcal{Y}':=\mathrm{Ker}(\mathrm{Hom}_{\mathbf{D}(B,d)}(\mathrm{Tria}(Q),-))$. By
    \cite[ Chapter III, Theorem 2.3, Chapter IV, Proposition 1.1]{be07}, we see that $(\mathrm{Tria}(Q),\mathcal{Y}')$ is a torsion pair in $\mathbf{D}(B,d)$. It is easy to see $\mathcal{Y}\subseteq \mathcal{Y'}$. In particular, we have $\mathrm{Hom}_{\mathbf{D}(B,d)}(Q,\mathcal{Y}')=0$, which yields $Q\in \mathcal{X}$. Therefore, $\mathrm{Tria}(Q)\subseteq \mathcal{X}$ since $\mathcal{X}$ is a full triangulated subcategory of $\mathbf{D}(B,d)$. On the other side, let $j_{!}:\mathcal{X}\rightarrow \mathbf{D}(B,d)$ and $k:\mathcal{Z}\rightarrow \mathbf{D}(B,d)$ be the canonical inclusions. Then the functor $j_{!}$ has a right adjoint functor $R:\mathbf{D}(B,d)\rightarrow\mathcal{X}$. Since $\mathcal{(X,Y,Z)}$ is a TTF triple in $\mathbf{D}(B,d)$, the functor $Rk:\mathcal{Z}\rightarrow \mathcal{X}$ is an equivalence. So the composition functor $Rk\mathbf{H}:\mathbf{D}(A,d)\rightarrow \mathcal{X}$ is an equivalence because $\mathbf{H}:\mathbf{D}(A,d)\rightarrow \mathcal{Z}$ is an equivalence. Since $\mathbf{H}(A)=Q\in \mathcal{X}$, we have $Rk\mathbf{H}(A)\rightarrow \mathcal{X}$ is an equivalence under which $\mathrm{Tria}(A)$ has the image $\mathrm{Tria}(Q)$. Since $Rk\mathbf{H}$ commutes with coproducts, we get that $\mathcal{X}=\mathrm{Tria}(Q)$ and $\mathcal{Y=Y}'$. Hence $(\mathrm{Tria}(Q), \mathrm{Ker}(\mathbf{G}), \mathrm{Im}(\mathbf{H}))$ is a TTF triple in $\mathbf{D}(B,d)$.
\end{proof}

In the following, we show that good silting objects also induce a recollement of derived category of dg-algebras.

\begin{theorem}\label{the3.5} Let $A$ be a dg-algebra, $U$ a good silting object
 in $\mathbf{D}(A,d)$. Let $j_{!}:\mathcal{X}\rightarrow \mathbf{D}(B,d)$ and $i_{\ast}:\mathrm{Ker}(\mathbf{G})\rightarrow \mathbf{D}(B,d)$ be the inclusions.  Then we have a recollement of triangulated categories
$$\xymatrixcolsep{3pc}\xymatrix{\mathbf{D}(E,d)\ar[r]^{}&\ar@<-3ex>[l]_{}
\ar@<3ex>[l]^{}\mathbf{D}(B,d)
\ar[r]^-{j^{!}}&\ar@<-3ex>[l]_-{j_{!}=inc}\ar@<3ex>[l]^-{j_{\ast}}}\mathrm{Tri}(Q)$$
together with a triangle equivalence $\mathbf{G}j_{\ast}: \mathrm{Tria}(Q)\rightarrow \mathbf{D}(A,d)$, such that $\mathbf{G}j_{\ast}j^{!}$ is naturally isomorphic to $\mathbf{G}$, where $E=\mathbb{R}\mathrm{Hom}_{B}(i^{\ast}(B),i^{\ast}(B))$.
\end{theorem}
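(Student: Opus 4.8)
The plan is to read off the recollement from the TTF triple produced in Lemma~\ref{lem3.4}, and then to recognise its leftmost term as the derived category of a dg-algebra via a dg-Morita argument.

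First I would invoke the standard passage from a TTF triple to a recollement (see \cite{be04}). From the TTF triple $(\mathrm{Tria}(Q),\mathrm{Ker}(\mathbf{G}),\mathrm{Im}(\mathbf{H}))$, the torsion pair $(\mathrm{Tria}(Q),\mathrm{Ker}(\mathbf{G}))$ supplies a left adjoint $i^{\ast}$ to the inclusion $i_{\ast}\colon\mathrm{Ker}(\mathbf{G})\hookrightarrow\mathbf{D}(B,d)$ and a right adjoint $j^{!}$ to the inclusion $j_{!}\colon\mathrm{Tria}(Q)\hookrightarrow\mathbf{D}(B,d)$, while the torsion pair $(\mathrm{Ker}(\mathbf{G}),\mathrm{Im}(\mathbf{H}))$ supplies a right adjoint $i^{!}$ to $i_{\ast}$ and realises the Verdier quotient $\mathbf{D}(B,d)/\mathrm{Ker}(\mathbf{G})$ as $\mathrm{Im}(\mathbf{H})$, so that, after transporting along the equivalence $\mathrm{Tria}(Q)\xrightarrow{\sim}\mathbf{D}(B,d)/\mathrm{Ker}(\mathbf{G})\xrightarrow{\sim}\mathrm{Im}(\mathbf{H})$, the inclusion $\mathrm{Im}(\mathbf{H})\hookrightarrow\mathbf{D}(B,d)$ becomes a right adjoint $j_{\ast}$ of $j^{!}$. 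The two gluing triangles demanded by a recollement are exactly the approximation triangles $j_{!}j^{!}M\to M\to i_{\ast}i^{\ast}M\rightsquigarrow$ and $i_{!}i^{!}M\to M\to j_{\ast}j^{!}M\rightsquigarrow$ of the two torsion pairs, and the remaining axioms (full faithfulness of $i_{\ast},j_{!},j_{\ast}$ and the vanishing $i^{!}j_{\ast}=0$) hold because torsion-free objects have vanishing torsion part and conversely. This yields a recollement of $\mathbf{D}(B,d)$ relative to $\mathrm{Ker}(\mathbf{G})$ and $\mathrm{Tria}(Q)$ in which $j_{!}$ is the inclusion of $\mathrm{Tria}(Q)$.

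Next I would identify $\mathrm{Ker}(\mathbf{G})$ with $\mathbf{D}(E,d)$. By Lemmas~\ref{lem3.2} and~\ref{lem3.3} we have $\mathrm{Ker}(\mathbf{G})=\mathcal{E}=\mathrm{Tria}(Q)^{\perp}$ with $Q=\mathbf{H}(A)$ compact in $\mathbf{D}(B,d)$; hence $\mathrm{Ker}(\mathbf{G})$ is closed under small coproducts, so $i_{\ast}$ preserves coproducts, its left adjoint $i^{\ast}$ preserves compactness, and $i^{\ast}(B)$ is compact in $\mathrm{Ker}(\mathbf{G})$. Since $i_{\ast}$ is fully faithful we have $i^{\ast}i_{\ast}\cong\mathrm{id}$, so $i^{\ast}$ is essentially surjective; combining this with the facts that $i^{\ast}$ preserves coproducts and triangles, that $B$ generates $\mathbf{D}(B,d)$, and that $\mathrm{Hom}_{\mathrm{Ker}(\mathbf{G})}(i^{\ast}(B),Y[n])\cong\mathrm{Hom}_{\mathbf{D}(B,d)}(B,i_{\ast}Y[n])\cong H^{n}(i_{\ast}Y)$ for $Y\in\mathrm{Ker}(\mathbf{G})$, one checks that $i^{\ast}(B)$ is a compact generator of $\mathrm{Ker}(\mathbf{G})$. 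Keller's Morita theorem for dg-algebras \cite{ke94} then yields a triangle equivalence $\mathrm{Ker}(\mathbf{G})\xrightarrow{\sim}\mathbf{D}(E,d)$, implemented by $\mathbb{R}\mathrm{Hom}_{B}(i^{\ast}(B),-)$, with $E=\mathbb{R}\mathrm{Hom}_{B}(i^{\ast}(B),i^{\ast}(B))$; replacing the left term of the recollement by $\mathbf{D}(E,d)$ along this equivalence gives the displayed diagram.

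For the last two assertions I would use that $j_{\ast}$ has essential image $\mathrm{Im}(\mathbf{H})$, and that by Theorem~\ref{the3.2} the functor $\mathbf{H}\colon\mathbf{D}(A,d)\to\mathrm{Im}(\mathbf{H})$ is an equivalence whose quasi-inverse is the restriction of $\mathbf{G}$ (the counit $\mathbf{G}\mathbf{H}\to\mathrm{id}$ being invertible); hence $\mathbf{G}j_{\ast}\colon\mathrm{Tria}(Q)\to\mathbf{D}(A,d)$ is a composite $\mathrm{Tria}(Q)\xrightarrow{\sim}\mathrm{Im}(\mathbf{H})\xrightarrow{\sim}\mathbf{D}(A,d)$ of triangle equivalences. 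Applying the triangle functor $\mathbf{G}$ to $i_{!}i^{!}M\to M\to j_{\ast}j^{!}M\rightsquigarrow$ and using $\mathbf{G}(i_{!}i^{!}M)=0$ (since $i_{!}i^{!}M\in\mathrm{Ker}(\mathbf{G})$) shows that $\mathbf{G}(M)\to\mathbf{G}(j_{\ast}j^{!}M)$ is an isomorphism, natural in $M$, i.e. $\mathbf{G}j_{\ast}j^{!}\cong\mathbf{G}$. The crux, I expect, will be the dg-Morita step: verifying carefully that $i^{\ast}(B)$ is a compact generator of $\mathrm{Ker}(\mathbf{G})$ and that its derived endomorphism dg-algebra is exactly the $E$ in the statement; everything else is formal bookkeeping with the two torsion pairs and their adjoints.
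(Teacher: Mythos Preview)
Your proposal is correct and follows essentially the same route as the paper: pass from the TTF triple of Lemma~\ref{lem3.4} to a recollement, identify $\mathrm{Ker}(\mathbf{G})$ with $\mathbf{D}(E,d)$ via Keller's theorem applied to the compact generator $i^{\ast}(B)$, and read off the equivalence $\mathbf{G}j_{\ast}$ and the isomorphism $\mathbf{G}j_{\ast}j^{!}\cong\mathbf{G}$ from the gluing triangle. The only point the paper makes explicit that you leave implicit is that $\mathrm{Ker}(\mathbf{G})$, as a full triangulated subcategory of the algebraic triangulated category $\mathbf{D}(B,d)$, is itself algebraic (by \cite[Lemma~7.5(3)]{he06}), which is the hypothesis required for Keller's theorem \cite[Theorem~8.7]{ke07}; your compact-generator argument via the adjunction $\mathrm{Hom}(i^{\ast}(B),Y[n])\cong H^{n}(i_{\ast}Y)$ is a clean alternative to the paper's citation of \cite[Chapter~IV, Proposition~1.1]{be07}.
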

\begin{proof}By Lemma \ref{lem3.4},  $(\mathrm{Tria}(Q), \mathrm{Ker}(\mathbf{G}), \mathrm{Im}(\mathbf{H}))$ is a TTF triple in $\mathbf{D}(B,d)$. By the correspondence between recollements and TTF triples (see, for example, \cite[Section 9.2]{ne01}), there exists a recollement:
$$\xymatrix{\mathcal{Y}\ar[r]^-{i_{\ast}}&\ar@<-3ex>[l]_-{i^{\ast}}
\ar@<3ex>[l]^-{i^{!}}\mathbf{D}(B,d)
\ar[r]^{j^{!}}&\ar@<-3ex>[l]_{j_{!}=inc}\ar@<3ex>[l]^{j_{\ast}}\mathrm{Tria}(Q).}\eqno(3.4)$$
We infer from ($3.4$) that $\mathrm{Im}(j_{\ast})=\mathrm{Ker}(\mathrm{Hom}_{\mathbf{D}(B,d)}(\mathrm{Ker}(\mathbf{G}),-))$ and that the functor $j_{\ast}:\mathrm{Tria}(Q)\rightarrow \mathrm{Im}(j_{\ast})$ is a triangle equivalence with the restriction of $j^{!}$ to $\mathrm{Im}(j_{\ast})$ as its quasi-inverse. On the other hand, it follows from Lemma \ref{lem3.3} that $\mathrm{Im}(\mathbf{H})=\mathrm{Ker}(\mathrm{Hom}_{\mathbf{D}(B,d)}(\mathrm{Ker}(\mathbf{G}),-))$ and the functor $H:\mathbf{D}(A,d)\rightarrow \mathrm{Im}(\mathbf{H})$ is a triangle equivalence with the restriction of $\mathbf{G}$ to $\mathrm{Im}(\mathbf{H})$ as its quasi-inverse. Consequently, $\mathrm{Im}(j_{\ast})=\mathrm{Im}(\mathbf{H})$ and the composition $\mathbf{G}j_{\ast}:\mathrm{Tria}(Q)\rightarrow \mathbf{D}(A,d)$ of $j_{\ast}$ with $G$ is a triangle equivalence. Therefore, we get the following diagram of functors:
$$\xymatrixcolsep{3pc}\xymatrix{
  \mathbf{D}(B,d)\ar[r]^{j^{!}} &  \mathrm{Tria}(Q) \ar@/_1.8pc/[l]_{j_{!}=inc}\ar@/^1.8pc/[l]^{j_{\ast}} \ar[r]^{j_{\ast}} &  \mathrm{Im}\mathbf{(H)}=\mathrm{Im}(j_{\ast})\ar[r]^{\mathbf{G}} \ar@/_1.8pc/[l]_{j^{!}}&\mathbf{D}(A,d)
  \ar@/^1.8pc/[l]^{\mathbf{H}}}.$$

For any $X\in \mathbf{D}(B,d)$, by the recollement ($3.4$), there exists a canonical triangle in $\mathbf{D}(B,d)$:
$$i_{\ast}i^{!}(X)\rightarrow X\rightarrow j_{\ast}j^{!}(X)\rightsquigarrow.$$
Since $\mathrm{Im}(i_{\ast}i^{!})=\mathrm{Im}(i_{\ast})=\mathrm{Ker}(\mathbf{G})$, $\mathbf{G}(X)\cong \mathbf{G}j_{\ast}j^{!}(X)$ in $\mathbf{D}(A,d)$.

On the other hand, since $\mathcal{Y}$ is closed under coproducts and $\mathbf{D}(B,d)$ is compact generated by $B$, by \cite[Chapter IV, Proposition 1.1]{be07}, we get that $i^{\ast}(B)$ is a compact generator of $\mathcal{Y}$. Furthermore, it is well known that the derived category $\mathbf{D}(B,d)$ of dg-algebra $B$ is an algebraic triangulated categories, that is, it is triangle equivalent to the stable category of some Frobenius category. Since the triangle functor $i_{\ast}:\mathcal{Y}\rightarrow \mathbf{D}(B,d)$ is fully faithful, we see that $\mathcal{Y}$ is an algebraic triangulated category by \cite[Lemma 7.5(3)]{he06}. Let $E:=\mathbb{R}\mathrm{Hom}_{B}(i^{\ast}(B),i^{\ast}(B))$. Thus, by Keller's theorem \cite[Theorem 8.7]{ke07}, there is a derived equivalence $\mathbf{D}(E,d)\simeq \mathcal{Y}$ which can be illustrated by the following diagram
$$\xymatrixcolsep{6pc}\xymatrix{\mathbf{D}(E,d)\ar@<1.2ex>[r]^-{i^{\ast}(B)\otimes_{E}^{\mathbb{L}}-}&
\ar@<1.2ex>[l]^-{\mathbb{R}\mathrm{Hom}_{B}(i^{\ast}(B),-)}
 \mathcal{Y}. }
$$
Combining the statements above, we obtain the desired recollement.
\end{proof}

Before ending this section, we shall give a necessary and sufficient criterion for the existence of a recollement of the derived category of a dg-algebra relative to two derived
categories of weak non-positive dg-algebras. We will need the following notations.

\begin{definition}Let $\mathcal{T}$ be a triangulated category with set indexed coproducts. An object $P$ of $\mathcal{T}$ will be called \emph{self-compact} if the restricted functor
$\mathrm{Hom}_{\mathcal{T}}(P,-)|_{\mathrm{Tria}(P)}$ respects set indexed coproducts.
\end{definition}
\begin{definition}Let $B$ be a dg-algebra. A right dg-$B$-module $P$ is
called\emph{ partial silting} if it is self-compact and semi-self orthogonal, i.e.
$$\mathrm{Hom}_{\mathbf{D}(B,d)}(P, P[n]) = 0~\text{for every~ } n>0.$$
\end{definition}

There are also versions of \cite[Theorem 1.6]{jo91} and \cite[Proposition 3.2]{jo91} from the perspective of silting theory.

\begin{lemma}\label{lem5.6}Let $B$ be a dg-algebra with a cofibrant right dg-module $P$ which is partial silting
in $\mathbf{D}(B,d)$, and let $E$ be the endomorphism dg-algebra of $P_{B}$. Then $E$ is weak non-positive, and there is an adjoint pair of functors
$$\xymatrixcolsep{6pc}\xymatrix{\mathbf{D}(E,d)\ar@<1.2ex>[r]^-{\iota_{\ast}={-\otimes_{E}^{\mathbb{L}}P}}&
\ar@<1.2ex>[l]^-{\iota^{!}=\mathbb{R}\mathrm{Hom}_{B}(P,-)}
 \mathbf{D}(B,d). }
$$
where $\iota_{\ast}$ is a full embedding with essential image
$\mathrm{Im}\iota_{\ast} = \mathrm{Tria}(P)$.
\end{lemma}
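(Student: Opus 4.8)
\textbf{Proof proposal for Lemma \ref{lem5.6}.}

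The plan is to follow the template of \cite[Theorem 1.6, Proposition 3.2]{jo91}, adapting the k-flatness hypotheses away by working with a cofibrant $P$. First I would establish that $E=\mathrm{DgEnd}_B(P)$ is weak non-positive. Since $P$ is partial silting, it is semi-self orthogonal, i.e. $\mathrm{Hom}_{\mathbf{D}(B,d)}(P,P[n])=0$ for all $n>0$; because $P$ is cofibrant we may identify $\mathrm{Hom}_{\mathbf{D}(B,d)}(P,P[n])$ with $H^{n}(\mathrm{Hom}^{\bullet}_{B}(P,P))=H^{n}(E)$. Hence $H^{n}(E)=0$ for all $n>0$, which is exactly the definition of weak non-positivity of $E$.

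Next I would produce the adjunction. Since $P$ is a dg-$E$-$B$-bimodule (with $E$ acting on the left through its definition as the endomorphism dg-algebra), the general machinery of Section 2.1 gives an adjoint pair $(-\otimes^{\mathbb{L}}_{E}P,\ \mathbb{R}\mathrm{Hom}_{B}(P,-))$ of triangle functors between $\mathbf{D}(E,d)$ and $\mathbf{D}(B,d)$; set $\iota_{\ast}=-\otimes^{\mathbb{L}}_{E}P$ and $\iota^{!}=\mathbb{R}\mathrm{Hom}_{B}(P,-)$. To see $\iota_{\ast}$ is a full embedding it suffices, by the standard adjunction criterion, to check that the unit $\mathrm{Id}_{\mathbf{D}(E,d)}\to\iota^{!}\iota_{\ast}$ is an isomorphism; since both functors commute with coproducts and shifts and $\mathbf{D}(E,d)=\mathrm{Tria}(E)$, it is enough to verify this on $E$ itself. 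There the unit map is the canonical morphism $E\to\mathbb{R}\mathrm{Hom}_{B}(P,P)$, which is a quasi-isomorphism because $P$ is cofibrant (so $\mathbb{R}\mathrm{Hom}_{B}(P,P)\cong\mathrm{Hom}^{\bullet}_{B}(P,P)=E$ on the nose). Thus $\iota_{\ast}$ is fully faithful.

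For the essential image, note $\iota_{\ast}(E)\cong P$, so $\mathrm{Im}\,\iota_{\ast}\supseteq\mathrm{Tria}(P)$ since $\iota_{\ast}$ commutes with coproducts and triangles and $\mathbf{D}(E,d)=\mathrm{Tria}(E)$. For the reverse inclusion, one uses that $P$ is self-compact: the restricted functor $\mathrm{Hom}_{\mathbf{D}(B,d)}(P,-)|_{\mathrm{Tria}(P)}$ respects coproducts, which forces the fully faithful $\iota_{\ast}$ to have image exactly $\mathrm{Tria}(P)$ (an object in the image is a homotopy colimit of shifts and cones of copies of $P$, hence lies in $\mathrm{Tria}(P)$; conversely $\mathrm{Tria}(P)$ is generated by $P=\iota_{\ast}(E)$ and $\iota_{\ast}$ is exact and coproduct-preserving).

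The main obstacle I anticipate is the essential-image identification, specifically handling coproducts: showing $\mathrm{Im}\,\iota_{\ast}=\mathrm{Tria}(P)$ rather than some larger localizing subcategory requires precisely the self-compactness hypothesis, and one must be careful that $\iota_{\ast}$ restricted to $\mathbf{D}(E,d)$ lands in $\mathrm{Tria}(P)$ (not merely in the localizing subcategory generated by $P$ in $\mathbf{D}(B,d)$), using that $E$ is a compact generator of $\mathbf{D}(E,d)$ and that $\iota_{\ast}$ preserves compactness of $E$ into self-compactness of $P$. The other routine points — bimodule structure, the adjunction, and the unit computation — follow the pattern already used in the proof of Theorem \ref{the3.5} and in \cite{jo91}, so I would only sketch them.
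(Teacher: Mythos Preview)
Your proposal is correct and follows the same route as the paper, which simply records the weak non-positivity computation and then cites \cite[Theorem 1.6]{jo91} together with $\mathbf{D}(E,d)=\mathrm{Tria}(E)$; you have essentially unpacked that citation.

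One small logical reordering is worth making explicit. In your full-faithfulness step you assert that ``both functors commute with coproducts'' so that it suffices to check the unit on $E$. The functor $\iota^{!}=\mathbb{R}\mathrm{Hom}_{B}(P,-)$ does \emph{not} commute with coproducts in general; what is true is that its restriction to $\mathrm{Tria}(P)$ does, by self-compactness of $P$. So the clean order is: first observe (without using full faithfulness) that $\mathrm{Im}\,\iota_{\ast}\subseteq\mathrm{Tria}(P)$, since $\iota_{\ast}$ is a coproduct-preserving triangle functor sending the generator $E$ of $\mathbf{D}(E,d)=\mathrm{Tria}(E)$ to $P$; then self-compactness gives that $\iota^{!}\iota_{\ast}$ preserves coproducts, and your unit check on $E$ yields full faithfulness; finally, full faithfulness makes $\mathrm{Im}\,\iota_{\ast}$ a localizing subcategory containing $P$, whence $\mathrm{Im}\,\iota_{\ast}\supseteq\mathrm{Tria}(P)$. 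In particular, self-compactness is really needed for the full-faithfulness step rather than for the essential-image inclusion you flagged as the main obstacle.
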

\begin{proof}Since $P$ is partial silting
in $\mathbf{D}(B,d)$, we have
$H^{i}(E)\cong \mathrm{Hom}_{\mathbf{D}(B,d)}(P,P[i])=0$ for all $i>0$, thus $E$ is weak non-positive. Moreover, as a consequence of Neeman-Thomason localization \cite[Theorem 2.1.2]{ne96}, we have  $\mathbf{D}(E,d) = \mathrm{Tria}(E)$. The rest statements follows by \cite[Theorem 1.6]{jo91}.
\end{proof}

\begin{lemma}\label{lem5.7}Let $B$ be a dg-algebra and let $Q$ be an cofibrant and  compact partial silting right dg-$B$-module  in $\mathbf{D}(B,d)$.
Consider the endomorphism dg-algebra $A$ of $Q$.
Then $A$ is weak non-positive and $Q$ becomes a dg-$A$-$B$-bimodule. Let $S = Q^{\ast} = \mathbb{R}\mathrm{Hom}_{B}(Q,B)$. Then there is a recollement
$$\xymatrixcolsep{7pc}\xymatrix{Q^{\perp}\ar[r]^-{i=inc}&\ar@<-4ex>[l]_-{l}
\ar@<4ex>[l]^-{r}\mathbf{D}(B,d)
\ar[r]^{\mathbb{R}\mathrm{Hom}_{B}(Q,-)\cong{-\otimes_{B}^{\mathbb{L}}S}}
&\ar@<-4ex>[l]_{-\otimes_{A}^{\mathbb{L}}Q}\ar@<4ex>[l]^{\mathbb{R}\mathrm{Hom}_{A}(S,-)}
\mathbf{D}(A,d).}\eqno(3.5)$$
\end{lemma}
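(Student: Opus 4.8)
The plan is to build the recollement in two stages, combining Lemma \ref{lem5.6} (applied to the compact partial silting module $Q$) with a general localization argument. First I would invoke Lemma \ref{lem5.6} with $P=Q$: since $Q$ is cofibrant and partial silting, its endomorphism dg-algebra $A$ is weak non-positive, and there is an adjoint pair $(\iota_\ast,\iota^!)=(-\otimes^{\mathbb{L}}_A Q,\ \mathbb{R}\mathrm{Hom}_B(Q,-))$ with $\iota_\ast$ a full embedding onto $\mathrm{Tria}(Q)$. The extra input here is compactness: because $Q$ is a \emph{compact} object of $\mathbf{D}(B,d)$, the functor $\mathbb{R}\mathrm{Hom}_B(Q,-)$ commutes with arbitrary coproducts, so $A$ is also compact as an object of $\mathbf{D}(A,d)$ under $\iota_\ast$, and more importantly $\mathrm{Tria}(Q)$ is a \emph{compactly generated} localizing subcategory of $\mathbf{D}(B,d)$. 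I would also identify $S=Q^\ast=\mathbb{R}\mathrm{Hom}_B(Q,B)$ as a dg-$B$-$A$-bimodule, and check the standard adjunction isomorphism $\mathbb{R}\mathrm{Hom}_B(Q,-)\cong -\otimes^{\mathbb{L}}_B S$ on $\mathbf{D}(B,d)$; this is the usual compact-object duality, valid precisely because $Q$ is compact.

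Next I would produce the right-hand side of the recollement. Since $\mathrm{Tria}(Q)$ is generated by the compact object $Q$ inside the compactly generated category $\mathbf{D}(B,d)$, Neeman's Brown representability / Bousfield localization theory (as in \cite{ne01}, or \cite{ne96}) gives a smashing localization: the inclusion $Q^{\perp}\hookrightarrow\mathbf{D}(B,d)$ has both a left adjoint $l$ and a right adjoint $r$, and $(\mathrm{Tria}(Q),Q^{\perp})$ together with a further orthogonal class forms a TTF triple, equivalently a recollement
$$\xymatrix{Q^{\perp}\ar[r]^-{i}&\mathbf{D}(B,d)\ar[r]&\mathrm{Tria}(Q)}$$
with the three functors on each side as required. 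Composing the equivalence $\iota_\ast:\mathbf{D}(A,d)\xrightarrow{\ \sim\ }\mathrm{Tria}(Q)$ with this recollement — i.e. transporting the quotient functor $\mathbf{D}(B,d)\to\mathrm{Tria}(Q)$ along $\iota^!$ to get $\mathbb{R}\mathrm{Hom}_B(Q,-):\mathbf{D}(B,d)\to\mathbf{D}(A,d)$, and transporting its two adjoints to $-\otimes^{\mathbb{L}}_A Q$ and $\mathbb{R}\mathrm{Hom}_A(S,-)$ — yields exactly the recollement (3.5). The identification of the left and right adjoints of $\mathbb{R}\mathrm{Hom}_B(Q,-)$ as $-\otimes^{\mathbb{L}}_A Q$ (its left adjoint, automatic from the tensor-Hom adjunction) and $\mathbb{R}\mathrm{Hom}_A(S,-)=\mathbb{R}\mathrm{Hom}_A(\mathbb{R}\mathrm{Hom}_B(Q,B),-)$ (its right adjoint, using $\mathbb{R}\mathrm{Hom}_B(Q,-)\cong-\otimes^{\mathbb{L}}_B S$ and the adjunction for $S$) is then a formal check.

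I expect the main obstacle to be the bookkeeping that makes the two halves fit together into a genuine recollement rather than just two separate adjoint triples: one must verify the vanishing condition $i^! j_\ast=0$ (equivalently that the essential image of $\mathbb{R}\mathrm{Hom}_A(S,-)$ lands in $Q^{\perp}$) and that the canonical triangles of the recollement hold, which comes down to showing $\mathrm{Tria}(Q)=\,^{\perp}(Q^{\perp})$ — this is where compact generation of $\mathbf{D}(B,d)$ and compactness of $Q$ are used essentially, via \cite[Proposition 5.14]{be04} or the standard theory of \cite{ne01}. The weak non-positivity of $A$ is the easy part, already isolated in Lemma \ref{lem5.6}: $H^i(A)\cong\mathrm{Hom}_{\mathbf{D}(B,d)}(Q,Q[i])=0$ for $i>0$ by the semi-self-orthogonality of $Q$. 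Once these pieces are assembled, (3.5) follows, and the description of the three functors on the right as $-\otimes^{\mathbb{L}}_A Q$, $\mathbb{R}\mathrm{Hom}_B(Q,-)\cong-\otimes^{\mathbb{L}}_B S$ and $\mathbb{R}\mathrm{Hom}_A(S,-)$ is read off from the adjunctions.
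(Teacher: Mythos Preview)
Your proposal is correct and follows essentially the same route as the paper. The paper's own proof is a two-line citation: weak non-positivity of $A$ is immediate from the partial silting condition, and the recollement is referred to \cite[Lemmas 3.1 and 3.2]{ba13}; your argument simply unpacks what those lemmas contain (compact duality $\mathbb{R}\mathrm{Hom}_B(Q,-)\cong -\otimes_B^{\mathbb{L}}S$, the TTF triple $(\mathrm{Tria}(Q),Q^{\perp},\ldots)$ from compact generation, and transport along the equivalence $\mathbf{D}(A,d)\simeq\mathrm{Tria}(Q)$ of Lemma \ref{lem5.6}).
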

\begin{proof}Since $Q$ is partial silting
in $\mathbf{D}(B,d)$, we get that $A$ is weak non-positive. The recollement follows from \cite[Lemmas 3.1 and 3.2]{ba13}.
\end{proof}
The preceding material allows me to prove the Theorem \ref{the1.2}. The idea of the proof of this theorem is
essentially taken from \cite[Theorem 3.3]{jo91}.

\bigskip
{PROOF OF THEOREM 1.2.}
$(i)\Rightarrow(ii)$. We claim that $P$ and $Q$ are partial silting. Since $C$ is a weak non-positive dg-algebra, we have $\mathrm{Hom}_{\mathbf{D}(B,d)}(P,P[i])\cong\mathrm{Hom}_{\mathbf{D}(C,d)}(C,C[i])\cong H^{i}\mathrm{Hom}^{\bullet}(C,C)=H^{i}(C)=0$ for all $i>0$. Thus $P$ is partial silting by \cite[Lemma 1.7]{jo91}. Similarly, $Q$ is a partial silting dg-module. The rest statements follows from the proof of \cite[Theorem 3.3]{jo91}.

$(ii)\Rightarrow(i)$.
One can clearly replace $P$ and $Q$ with cofibrant objects. Let $C$ and $A$ be the endomorphism
dg-algebras of $P_{B}$ and $Q_{B}$ so we have the full embedding of Lemma \ref{lem5.6} because $P_{B}$ is self-compact, and the recollement of Lemma \ref{lem5.7} because $Q_{B}$ is compact.
If we could prove
$$Q^{\perp}=\mathrm{Tria}(P),\eqno(3.6)$$
then we could replace $Q^{\perp}$ by $\mathrm{Tria}(P)$  which could again be replaced by $\mathbf{D}(C,d)$ using the
full embedding of Lemma \ref{lem5.6}, and this would give (3.5) which implies the desired recollement. In this case, we can get
$i_{\ast}(C) = i\iota_{\ast} (C)\cong C\otimes_{C}^{\mathbb{L}}P\cong P$ and $j_{!}(A)=A\otimes_{A}^{\mathbb{L}}Q\cong Q$.

To show (3.6), note that one part $``\supseteq"$  is clear since $P$ is in $Q^{\perp}$ by assumption while $Q$ is compact. To prove the other part $``\subseteq"$.
let $X$ be in $Q^{\perp}$. The adjunction in Lemma \ref{lem5.6} gives a counit
morphism $\iota_{\ast}\iota^{!}X\stackrel{\varepsilon}\rightarrow X$ (where $\iota_{\ast}$ and $\iota^{!}$ are now used in the sense of Lemma \ref{lem5.6}) which
can be extended to a distinguished triangle
$$\iota_{\ast}\iota^{!}X\stackrel{\varepsilon}\rightarrow X\rightarrow Y\rightsquigarrow.$$
By adjoint functor theory, $\iota^{!}(\varepsilon)$ is an isomorphism, so $\iota^{!}Y=0$, that is, $\mathbb{R}\mathrm{Hom}_{B}(P,Y) = 0$,
so $Y$ is in $P^{\perp}$.
Moreover, $\iota_{\ast}\iota^{!}X$ is in the essential image of $\iota_{\ast}$ which equals $\mathrm{Tria}(P)$ by Lemma \ref{lem5.6}. Since $\mathrm{Tria}(P)\subseteq Q^{\perp}$ it follows that $\iota_{\ast}\iota^{!}X$
 is in $Q^{\perp}$. But $X$ is in $Q^{\perp}$ by assumption,
and it follows that also $Y$ is in $Q^{\perp}$.
So $Y$ is in $P^{\perp}\cap Q^{\perp}$ which is 0 by assumption, thus $Y$ = 0. Therefore, the distinguished
triangle shows $X\cong\iota_{\ast}\iota^{!}X$ and it is in the essential image of $\iota_{\ast}$ which is equal to $\mathrm{Tria}(P)$.\hfill$\Box$

\bigskip
\section { \bf Homological epimorphisms and weak non-positive dg-algebras }

\textbf{4.1. The case of homological epimorphism of dg-algebras }
\bigskip

Chen and Xi \cite{ch12} considered the case of a good 1-tilting module $_{A}T$ with the endomorphism ring $B$. They showed that the left two terms of
a recollement as in the statement of \cite[Theorem 1.1]{ch12} are induced by a homological epimorphism of rings.
In the following, we recall the definition of homological epimorphisms of dg-algebras and its characterization at the level of derived categories.

\begin{definition}(see \cite[Theorem 3.9]{pa09}) Let $\lambda:C\rightarrow D$ be a morphism between two dg-algebras $C$ and $D$. Then $\lambda$ is called a homological epimorphism of dg-algebras if the canonical map $D\otimes^{\mathbb{L}}_{C}D\rightarrow D$ is an isomorphism, or equivalently, if the induced functor $\lambda_{\ast}:\mathbf{D}(D,d)\rightarrow \mathbf{D}(C,d)$ is fully faithful.
\end{definition}

In \cite[Theorem 5]{ni09}, it is proved that for a flat small dg category $\mathcal{B}$ there are bijections between
equivalence classes of recollements of $\mathbf{D}(\mathcal{B})$, TTF triples on $\mathbf{D}(\mathcal{B})$ and equivalence classes of homological epimorphisms of dg categories
$\lambda : \mathcal{B}\rightarrow\mathcal{C}$. Recall that a dg-$k$-algebra $B$ is flat if the functor $-\otimes_{k}B$ preserves acyclicity. Moreover, we have the following result in the case of a flat dg-$k$-algebra.

\begin{lemma}\label{lem4.2}(see \cite[Theorem 5]{ni09} or \cite[Lemma 2.14]{ba13}) Let $B$ be a dg-algebra flat as a $k$-module and $\mathcal{(X,Y,Z)}$ be a $\mathrm{TTF}$ triple in $\mathbf{D}(B,d)$. Then there is a
dg-algebra $C$ and a homological epimorphism $F:B\rightarrow C$ such that $\mathcal{Y}$ is the essential image of the restriction of scalars functor $F_{\ast}:\mathbf{D}(C,d)\rightarrow \mathbf{D}(B,d)$.
\end{lemma}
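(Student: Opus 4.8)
\emph{Proof proposal.} The plan is to pass from the TTF triple to its associated recollement, to identify the kernel term $\mathcal{Y}$ with the derived category of a dg-endomorphism algebra via Keller's Morita theorem, and finally to rigidify this identification into one induced by a genuine morphism of dg-algebras $F\colon B\to C$; the hypothesis that $B$ be $k$-flat will be used only in this last, technical, step, which is exactly the content of \cite[Theorem 5]{ni09} (see also \cite[Lemma 2.14]{ba13}).

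First I would invoke the standard dictionary between TTF triples and recollements (see \cite[Section 9.2]{ne01}): the triple $(\mathcal{X},\mathcal{Y},\mathcal{Z})$ gives a recollement in which $\mathcal{Y}=\mathrm{Im}(i_{\ast})$ is the left-hand term, $i_{\ast}\colon\mathcal{Y}\hookrightarrow\mathbf{D}(B,d)$ being the inclusion, which has a left adjoint $i^{\ast}$ (the reflection attached to the torsion pair $(\mathcal{X},\mathcal{Y})$) and a right adjoint $i^{!}$ (the coreflection attached to $(\mathcal{Y},\mathcal{Z})$), with $i^{\ast}i_{\ast}\cong\mathrm{Id}_{\mathcal{Y}}$. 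In particular $\mathcal{Y}$ is a full triangulated subcategory closed under coproducts.

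The next step is to construct $C$. Since $\mathbf{D}(B,d)=\mathrm{Tria}(B)$, since $i^{\ast}$ is a coproduct-preserving triangle functor, and since $i^{\ast}$ is essentially surjective onto $\mathcal{Y}$ (as $i^{\ast}i_{\ast}\cong\mathrm{Id}$ because $i_{\ast}$ is fully faithful), one gets $\mathcal{Y}=\mathrm{Tria}(i^{\ast}B)$; moreover $i^{\ast}B$ is self-compact in $\mathcal{Y}$, because the $(i^{\ast},i_{\ast})$-adjunction identifies $\mathrm{Hom}_{\mathcal{Y}}(i^{\ast}B,-)$ with $H^{0}(i_{\ast}(-))$, and $i_{\ast}$ preserves the coproducts of $\mathcal{Y}$ while cohomology commutes with coproducts. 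Just as in the proof of Theorem~\ref{the3.5}, $\mathcal{Y}$ is an algebraic triangulated category (a full triangulated subcategory of the algebraic category $\mathbf{D}(B,d)$, cf.\ \cite[Lemma 7.5(3)]{he06}), so Keller's theorem \cite[Theorem 8.7]{ke07} applied to the self-compact generator $i^{\ast}B$ yields, with $C:=\mathbb{R}\mathrm{End}_{B}(i^{\ast}B)$, mutually quasi-inverse equivalences $\mathbb{R}\mathrm{Hom}_{B}(i^{\ast}B,-)\colon\mathcal{Y}\leftrightarrows\mathbf{D}(C,d)\colon{-\otimes^{\mathbb{L}}_{C}i^{\ast}B}$. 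Composing the second one with $i_{\ast}$ produces a fully faithful, coproduct-preserving triangle functor $G'\colon\mathbf{D}(C,d)\to\mathbf{D}(B,d)$ with $\mathrm{Im}(G')=i_{\ast}(\mathcal{Y})=\mathcal{Y}$ and $G'(C)\cong i_{\ast}i^{\ast}B$.

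The remaining and hardest step is to check that $G'$ coincides, up to natural isomorphism, with the restriction of scalars along a homological epimorphism $F\colon B\to C$. Its left adjoint is $\Phi:=\mathbb{R}\mathrm{Hom}_{B}(i^{\ast}B,i^{\ast}(-))\colon\mathbf{D}(B,d)\to\mathbf{D}(C,d)$, a coproduct-preserving triangle functor with $\Phi(B)\cong C$ as a dg-$C$-module. By the Eilenberg--Watts-type theorem for derived categories of dg-algebras such a $\Phi$ is isomorphic to $-\otimes^{\mathbb{L}}_{B}X$ for a dg-$B$-$C$-bimodule $X\simeq C$, and applying $\mathbb{R}\mathrm{End}$ together with a choice of strict models produces a morphism of dg-algebras $F\colon B\to C$ for which $X\cong C$ as $B$-$C$-bimodules, whence $\Phi\cong{-\otimes^{\mathbb{L}}_{B}C}$ and so $G'\cong F_{\ast}$. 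This is the point where $k$-flatness of $B$ is indispensable: it allows one to replace $B$, $B^{op}$ and $B\otimes_{k}B^{op}$ by cofibrant resolutions that compute all the derived functors in sight, so that the dg-algebra $\mathbb{R}\mathrm{End}_{B}(i^{\ast}B)$ and the bimodule $X$ admit strict models over $k$ and the homotopy-coherent algebra map $B\to C$ can be rectified to an honest one; this rectification is precisely \cite[Theorem 5]{ni09} (respectively \cite[Lemma 2.14]{ba13}). Granting it, $F_{\ast}\cong G'$ is fully faithful, hence by the characterisation of homological epimorphisms recalled above $F$ is a homological epimorphism, and $\mathcal{Y}=\mathrm{Im}(G')=\mathrm{Im}(F_{\ast})$ is the essential image of $F_{\ast}\colon\mathbf{D}(C,d)\to\mathbf{D}(B,d)$, as required.
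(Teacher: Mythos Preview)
The paper does not actually prove Lemma~\ref{lem4.2}: it is stated with a parenthetical citation to \cite[Theorem~5]{ni09} and \cite[Lemma~2.14]{ba13} and then used as a black box in Theorem~\ref{the4.3}. There is therefore no ``paper's own proof'' to compare against; your proposal is effectively an attempt to reconstruct the argument of those references.

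As such a reconstruction, your outline is accurate in its broad strokes: the passage from the TTF triple to a recollement, the identification of $\mathcal{Y}$ as $\mathrm{Tria}(i^{\ast}B)$ with $i^{\ast}B$ self-compact, and the appeal to Keller's theorem are all correct and match the strategy in \cite{ni09}. You also correctly isolate the genuine difficulty, namely rectifying the abstract equivalence $\mathbf{D}(C,d)\simeq\mathcal{Y}$ into one induced by an honest morphism of dg-algebras $F\colon B\to C$, and you correctly locate the use of $k$-flatness there. However, your treatment of that last step is not really a proof: the sentence ``applying $\mathbb{R}\mathrm{End}$ together with a choice of strict models produces a morphism of dg-algebras $F\colon B\to C$'' is the whole content of the lemma, and you immediately defer it back to \cite[Theorem~5]{ni09} and \cite[Lemma~2.14]{ba13}. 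If the goal is to give a self-contained argument, this is circular; if the goal is to explain what the cited references do, then the outline is fine but the actual construction of $F$ (via a cofibrant replacement of $B$ in the model category of dg-algebras and an explicit model for $i^{\ast}B$ as a bimodule) remains unexplained.
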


By the lemma above, we have the following result, which shows that  the left two terms of
a recollement as in the statement of Theorem \ref{the3.5} can be induced by a homological epimorphism of dg-algebras. From Lemma \ref{lem3.3a}, $\mathrm{DgEnd}_{B}(\mathbf{H}(A))\cong A$, then $\mathbf{H}(A)$ acquires the structure of dg-$A^{op}$-module.
\begin{theorem}\label{the4.3} Let $A$ be a dg-algebra, $U$ a good silting object
 in $\mathbf{D}(A,d)$. If $B$ is a $k$-flat dg-algebra, then there exist a dg-algebra $C$ and a recollement of triangulated categories
$$\xymatrixcolsep{7pc}\xymatrix{\mathbf{D}(C,d)\ar[r]^{\lambda_{\ast}}&\ar@<-4ex>[l]_{C\otimes_{B}^{\mathbb{L}}-}
\ar@<4ex>[l]^{\mathbb{R}\mathrm{Hom}_{B}(C, -)}\mathbf{D}(B,d)
\ar[r]^{\mathbf{G}\cong \mathbb{R}\mathrm{Hom}_{B}(\mathbf{H}(A),-)}&\ar@<-4ex>[l]_{-\otimes^{\mathbb{L}}_{A}\mathbf{H}(A)}\ar@<4ex>[l]^{\mathbf{H}}\mathbf{D}(A,d).}$$
such that $\lambda:B\rightarrow C$ is a homological epimorphism.
\end{theorem}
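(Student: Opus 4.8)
The plan is to feed the TTF triple of Lemma~\ref{lem3.4} into the $k$-flat realization of Lemma~\ref{lem4.2}, and then to identify the six functors of the resulting recollement by transporting the constructions already used in the proof of Theorem~\ref{the3.5}. Throughout write $Q:=\mathbf{H}(A)$, $\mathcal{Y}:=\mathrm{Ker}(\mathbf{G})$, $\mathcal{Z}:=\mathrm{Im}(\mathbf{H})$. First I would recall from Lemma~\ref{lem3.4} that $(\mathrm{Tria}(Q),\mathcal{Y},\mathcal{Z})$ is a TTF triple in $\mathbf{D}(B,d)$, hence by the standard correspondence (e.g.\ \cite[Section~9.2]{ne01}) it gives a recollement of $\mathbf{D}(B,d)$ with left term $\mathcal{Y}$ and right term the Verdier quotient $\mathbf{D}(B,d)/\mathcal{Y}$. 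Since $B$ is $k$-flat, I would then apply Lemma~\ref{lem4.2} to this TTF triple to obtain a dg-algebra $C$ and a homological epimorphism $\lambda:B\rightarrow C$ whose restriction-of-scalars functor $\lambda_{\ast}:\mathbf{D}(C,d)\rightarrow\mathbf{D}(B,d)$ is a full embedding with essential image precisely $\mathcal{Y}$. Because $\lambda_{\ast}$ is restriction of scalars, it comes with its left adjoint $C\otimes_{B}^{\mathbb{L}}-$ and right adjoint $\mathbb{R}\mathrm{Hom}_{B}(C,-)$, and these are forced to be the functors $i^{\ast},i_{\ast},i^{!}$ on the left of the recollement; the relation $i^{\ast}j_{!}=0$ is then automatic, since $\mathrm{Ker}(C\otimes_{B}^{\mathbb{L}}-)={}^{\perp}\mathcal{Y}=\mathrm{Tria}(Q)$.

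Next I would pin down the right-hand side. By Theorem~\ref{the3.2} the functor $\mathbf{H}:\mathbf{D}(A,d)\rightarrow\mathcal{Z}$ is a triangle equivalence with quasi-inverse the restriction of $\mathbf{G}$, so transporting the recollement along $\mathbf{H}$ turns its right term into $\mathbf{D}(A,d)$ with section $j_{\ast}=\mathbf{H}$. As recalled before the statement, $Q=\mathbf{H}(A)$ carries a dg-$A^{op}$-module structure with $\mathrm{DgEnd}_{B}(Q)\cong A$ (Lemma~\ref{lem3.3a}); hence $-\otimes_{A}^{\mathbb{L}}\mathbf{H}(A):\mathbf{D}(A,d)\rightarrow\mathbf{D}(B,d)$ is a full embedding (its unit at $A$ is the isomorphism $A\cong\mathbb{R}\mathrm{Hom}_{B}(Q,Q)$, and both functors commute with coproducts while $\mathbf{D}(A,d)=\mathrm{Tria}(A)$) with essential image $\mathrm{Tria}(Q)$, so $j_{!}=-\otimes_{A}^{\mathbb{L}}\mathbf{H}(A)$. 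Its right adjoint is $\mathbb{R}\mathrm{Hom}_{B}(Q,-)$, which by the $i=0$ instance of the natural isomorphisms established in the proof of Lemma~\ref{lem3.3} is isomorphic to $\mathbf{G}=-\otimes_{B}^{\mathbb{L}}U$; thus $j^{!}=j^{\ast}\cong\mathbf{G}\cong\mathbb{R}\mathrm{Hom}_{B}(\mathbf{H}(A),-)$. Assembling $(\lambda_{\ast},\,C\otimes_{B}^{\mathbb{L}}-,\,\mathbb{R}\mathrm{Hom}_{B}(C,-))$ on the left with $(-\otimes_{A}^{\mathbb{L}}\mathbf{H}(A),\,\mathbf{G},\,\mathbf{H})$ on the right produces exactly the displayed recollement, and $\lambda$ is a homological epimorphism by construction.

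I expect the only genuine work to be the bookkeeping of the second paragraph: checking that transporting the abstract TTF-recollement along $\mathbf{H}$ really sends the quotient functor and its two adjoints to $\mathbf{G}$, $-\otimes_{A}^{\mathbb{L}}\mathbf{H}(A)$ and $\mathbf{H}$ respectively, i.e.\ reconciling these functorial descriptions with the ones obtained in Theorem~\ref{the3.5}. Everything else reduces to direct citation: the TTF triple (Lemma~\ref{lem3.4}), the $k$-flat replacement of Keller's dg-algebra $E$ by an honest homological epimorphism $\lambda:B\rightarrow C$ (Lemma~\ref{lem4.2}), and the equivalence $\mathbf{H}:\mathbf{D}(A,d)\simeq\mathcal{Z}$ (Theorem~\ref{the3.2}). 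The conceptual content, compared with Theorem~\ref{the3.5}, is solely that $k$-flatness upgrades the left-hand dg-algebra from an abstract derived endomorphism algebra to the target of a homological epimorphism out of $B$.
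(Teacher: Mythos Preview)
Your proposal is correct and follows essentially the same route as the paper: feed the TTF triple of Lemma~\ref{lem3.4} into Lemma~\ref{lem4.2} to obtain the homological epimorphism $\lambda:B\to C$, then invoke the recollement already built in Theorem~\ref{the3.5} and identify the right-hand functors. The only cosmetic difference is in verifying $\mathbf{G}\cong\mathbb{R}\mathrm{Hom}_{B}(\mathbf{H}(A),-)$: you cite the natural isomorphism $X\otimes_{B}^{\mathbb{L}}U\cong\mathbb{R}\mathrm{Hom}_{B}(\mathbf{H}(A),X)$ extracted from the proof of Lemma~\ref{lem3.3}, whereas the paper passes through the compactness of $\mathbf{H}(A)$ to write $\mathbb{R}\mathrm{Hom}_{B}(\mathbf{H}(A),-)\cong-\otimes_{B}^{\mathbb{L}}P$ with $P=\mathbb{R}\mathrm{Hom}_{B}(\mathbf{H}(A),B)$ and then uses Lemma~\ref{lem3.3a} to identify $P\cong U$.
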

\begin{proof}By Lemma \ref{lem4.2}, there exist a dg-algebra $C$ and a homological epimorphism $\lambda:B\rightarrow C$, such that $\mathbf{D}(C,d)\overset{\sim}{\rightarrow} \mathcal{Y}$ is a triangulated equivalence. Then the existence of the recollement follows from  Theorem \ref{the3.5}.

In the following, we claim that $\mathbf{G}\cong \mathbb{R}\mathrm{Hom}_{B}(\mathbf{H}(A),-)$.
Let $P:=\mathbb{R}\mathrm{Hom}_{B}(\mathbf{H}(A),B)$. By Lemma \ref{lem3.2}, we get that $\mathbf{H}(A)$ is a compact and cofibrant object in $\mathbf{D}(B,d)$. Then it follows that the functors
$\mathbb{R}\mathrm{Hom}_{B}(\mathbf{H}(A),-)$ and $-\otimes^{\mathbb{L}}_{B}P$ are isomorphic by \cite[Section 2.1]{jo91}. So it is suffices to prove  $-\otimes^{\mathbb{L}}_{B}P\cong \mathbf{G}={-\otimes^{\mathbb{L}}_{B}U}$. In fact, by Lemma \ref{lem3.3a}, we have $P=\mathbb{R}\mathrm{Hom}_{B}(\mathbf{H}(A),B)\cong\mathbb{R}\mathrm{Hom}_{A}(A,U)\cong U$ and the claim follows.
\end{proof}

Next, we show that we can also get a homological epimorphism $F:B\rightarrow C$ for dg-algebras without assume that $B$ is a $k$-flat dg-algebra.

It is known that there is a projective model structure on the category Dga($k$) of dg-algebras over $k$ (see \cite[Theorem 4.1]{sc00} and \cite[Proposition 1.3.5(1)]{be14}). Denote by $\mathrm{HoDga(k)}$ the homotopy category of this model category. Note that every morphism $B \rightarrow C$ in the homotopy category $\mathrm{HoDga(k)}$ of dg-algebras over $k$ is by \cite[Remark A.6]{st17} and \cite[Proposition A.10]{st17} represented by a fraction
$$\xymatrix{
                & E \ar[dr]^{f} \ar[dl]_{\sigma}            \\
 B & &     C        }$$
where $\sigma:E\rightarrow B$ is a surjective quasi-isomorphism of dg-algebras and $E$ is cofibrant. Then the quasi-isomorphism $\sigma:E\rightarrow B$ induces a triangle equivalence $\sigma_{\ast}:\mathbf{D}(B,d)\rightarrow \mathbf{D}(E,d)$, whose quasi-inverse is $\sigma^{\ast}=- \otimes ^{\mathbb{L}}_{E}B:\mathbf{D}(E,d)\rightarrow \mathbf{D}(B,d)$ (see \cite[Lemma 6.1(a)]{ke94}). Hence we have a triangle functor
$$\xymatrixcolsep{4pc}\xymatrix{\mathbf{D}(C,d)\ar[r]^{\sigma^{\ast}f_{\ast}}& \mathbf{D}(B,d),}$$
which takes the role of the functor induced by the restriction of scalars. Then \cite[Theorem 3.9]{pa09} says that $\sigma^{\ast}f_{\ast}$ is full. Therefore the morphism $f\sigma^{-1}:B \rightarrow C$ in HoDga($k$) is a homological epimorphism. Moreover, in this case, $E$ is a $k$-flat dg-algebra.

Let $\mathcal{T}$ be a triangulated category. Recall that a \emph{Bousfield localization functor} \cite{kr10} is a triangulated endofunctor $L:\mathcal{T}\rightarrow \mathcal{T}$ together with a natural transformation $\eta:\mathrm{Id}_{T}\rightarrow L$ with $L\eta:L\rightarrow L^{2}$ being invertible and $L\eta=\eta L$. The objects in Ker$L$ are called $L$-acyclic. A Bousfield localization functor $L:\mathcal{T}\rightarrow \mathcal{T}$ is called \emph{smashing} if it preserves coproducts. A localizing class $\mathcal{X}\subseteq \mathcal{T}$ is called \emph{smashing} if it is the class of acyclic objects for a smashing localization functor.
We have the following useful lemma.

\begin{lemma}\label{lem4.4}(see \cite[Proposition 2.5]{st17}) Let $B$ be a dg-algebra over a commutative ring $k$ and $\mathcal{X}\subseteq \mathbf{D}(B,d)$ be a smashing localizing class. Then there is a homological epimorphism $g=f\sigma^{-1}:B \rightarrow C$ in $\mathrm{HoDga}(k)$, represented by homomorphisms of dg-algebras $\sigma:E\rightarrow B$ and $f:E\rightarrow C$, such that
$$\sigma^{\ast}f_{\ast} : \mathbf{D}(C,d)\rightarrow \mathbf{D}(B,d)$$
is fully faithful, its essential image coincides with $\mathcal{X}^{\perp}$ and
$\mathcal{X}=\{X\in \mathbf{D}(B,d) \mid X\otimes ^{\mathbb{L}}_{E}C=0\}$.
\end{lemma}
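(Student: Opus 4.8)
The plan is to reduce the statement to the $k$-flat case already recorded in Lemma \ref{lem4.2}, by first replacing $B$ with a cofibrant (hence $k$-flat) resolution. Using the projective model structure on $\mathrm{Dga}(k)$, I would choose a surjective quasi-isomorphism $\sigma:E\rightarrow B$ with $E$ cofibrant, exactly as in the discussion preceding this lemma. Then $\sigma$ induces a triangle equivalence $\sigma_{\ast}:\mathbf{D}(B,d)\rightarrow \mathbf{D}(E,d)$ with quasi-inverse $\sigma^{\ast}=-\otimes^{\mathbb{L}}_{E}B$, and both functors commute with small coproducts, being restriction and extension of scalars. Transporting the given class, set $\mathcal{X}'=\sigma_{\ast}(\mathcal{X})$. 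Since $\sigma_{\ast}$ is a coproduct-preserving triangle equivalence, $\mathcal{X}'$ is again a smashing localizing class in $\mathbf{D}(E,d)$, its associated localization functor being the conjugate of the one attached to $\mathcal{X}$. Moreover $\sigma_{\ast}$ carries right orthogonals to right orthogonals, so $\mathcal{X}'^{\perp}=\sigma_{\ast}(\mathcal{X}^{\perp})$ and hence $\sigma^{\ast}(\mathcal{X}'^{\perp})=\mathcal{X}^{\perp}$.

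The next step is to promote the smashing localizing class $\mathcal{X}'$ to a TTF triple in $\mathbf{D}(E,d)$, so that Lemma \ref{lem4.2} applies. The smashing localization functor $L'$ attached to $\mathcal{X}'$ produces a torsion pair $(\mathcal{X}',\mathcal{X}'^{\perp})$, and \emph{smashing} means precisely that $\mathcal{X}'^{\perp}=\mathrm{Im}(L')$ is closed under small coproducts. Since $\mathbf{D}(E,d)$ is compactly generated by $E$, the localizing subcategory $\mathcal{X}'^{\perp}$ is generated by a set and its inclusion preserves coproducts, so by Brown representability the inclusion admits a right adjoint. This yields a second torsion pair $(\mathcal{X}'^{\perp},\mathcal{W})$ with $\mathcal{W}=(\mathcal{X}'^{\perp})^{\perp}$, and therefore a TTF triple $(\mathcal{X}',\mathcal{X}'^{\perp},\mathcal{W})$ in $\mathbf{D}(E,d)$.

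Now I would invoke the flat case. As $E$ is $k$-flat, Lemma \ref{lem4.2} applied to $(\mathcal{X}',\mathcal{X}'^{\perp},\mathcal{W})$ produces a dg-algebra $C$ and a homological epimorphism $f:E\rightarrow C$ for which $f_{\ast}:\mathbf{D}(C,d)\rightarrow \mathbf{D}(E,d)$ is fully faithful with essential image exactly the middle term $\mathcal{X}'^{\perp}$. Composing with the equivalence, $\sigma^{\ast}f_{\ast}:\mathbf{D}(C,d)\rightarrow \mathbf{D}(B,d)$ is then fully faithful with essential image $\sigma^{\ast}(\mathcal{X}'^{\perp})=\mathcal{X}^{\perp}$; this is precisely the condition for $g=f\sigma^{-1}:B\rightarrow C$ in $\mathrm{HoDga}(k)$ to be a homological epimorphism, by the characterization via fullness of the induced functor recorded in \cite[Theorem 3.9]{pa09}. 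This settles full faithfulness and the identification of the essential image with $\mathcal{X}^{\perp}$.

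Finally, for the description of $\mathcal{X}$ itself: the left adjoint of the fully faithful $f_{\ast}$ is $-\otimes^{\mathbb{L}}_{E}C$, and the composite $f_{\ast}(-\otimes^{\mathbb{L}}_{E}C)$ is the reflection $L'$ onto $\mathcal{X}'^{\perp}$. Since $f_{\ast}$ is faithful, an object $Y\in\mathbf{D}(E,d)$ lies in $\mathcal{X}'=\mathrm{Ker}(L')$ if and only if $Y\otimes^{\mathbb{L}}_{E}C=0$. Pulling this back through $\sigma_{\ast}$, and noting that for a $B$-module $X$ the restriction $\sigma_{\ast}X$ is just $X$ regarded as an $E$-module, so that $\sigma_{\ast}X\otimes^{\mathbb{L}}_{E}C\cong X\otimes^{\mathbb{L}}_{E}C$, I obtain $\mathcal{X}=\{X\in\mathbf{D}(B,d)\mid X\otimes^{\mathbb{L}}_{E}C=0\}$, as required. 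The main obstacle is the second step, the passage from a smashing localizing class to a genuine TTF triple: it rests on having a colocalization for $\mathcal{X}'^{\perp}$, which I obtain from Brown representability in the compactly generated category $\mathbf{D}(E,d)$, while the remaining bookkeeping through the quasi-isomorphism $\sigma$ is routine.
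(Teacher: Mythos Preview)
The paper does not actually prove this lemma; it is stated with a citation to \cite[Proposition 2.5]{st17} and used as a black box. Your argument is a correct reconstruction of that result and follows precisely the strategy the paper sketches in the paragraph immediately preceding the lemma: take a cofibrant replacement $\sigma:E\to B$ to reduce to the $k$-flat situation, transport the smashing class along the equivalence $\sigma_{\ast}$, and then invoke the flat case (Lemma~\ref{lem4.2} / \cite[Theorem~5]{ni09}) to produce the homological epimorphism $f:E\to C$.

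One small point worth tightening is your justification that $(\mathcal{X}',\mathcal{X}'^{\perp},(\mathcal{X}'^{\perp})^{\perp})$ is a TTF triple. The phrase ``$\mathcal{X}'^{\perp}$ is generated by a set'' is not quite the right hypothesis to quote. What you actually need is that $\mathcal{X}'^{\perp}$ satisfies Brown representability so that its inclusion has a right adjoint. This follows because $\mathcal{X}'^{\perp}$ is itself compactly generated: the localization $L'$ preserves coproducts, its right adjoint is the inclusion, so for $c$ compact in $\mathbf{D}(E,d)$ the object $L'(c)$ is compact in $\mathcal{X}'^{\perp}$, and these objects generate. With that in place, the rest of your bookkeeping through $\sigma$ is routine and matches the intended argument.
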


Next, we are ready to give a main result of this subsection, which shows that we can get a  recollemment among derived categories of dg-algebras, where the left two terms of the recollement can be induced by a homological epimorphism of dg-algebras moving the hypotheses of $k$-flatness. Furthermore, the existence of such a recollement implies that the given silting object $U$ is good.
 Recall that if $\mathrm{Add} \mathcal{C}\subseteq \mathcal{C}^{\perp_{i>0}}$, then by \cite[Proposition 2.5]{we13} we have
$$\langle \mathcal{C}\rangle = \{ X \in \mathcal{D}\mid  \mathrm{dim}_{\mathcal{C}}X < \infty\}[\mathbb{Z}] = \{ X \in \mathcal{D}\mid  \mathrm{codim}_{\mathcal{C}}X < \infty\}[\mathbb{Z}]$$
Hence this is the smallest thick subcategory (that is triangulated and closed under direct summands) containing $\mathcal{C}$.

\begin{theorem} \label{the4.5}
Let $A$ be a dg-algebra, $U$ a silting object
 in $\mathbf{D}(A,d)$, and let $B=\mathrm{DgEnd}_{A}(U)$.

(1) If $U$ is good, then there is a homological epimorphism $g=f\sigma^{-1}:B \rightarrow C$ in $\mathrm{HoDga}(k)$, represented by homomorphisms of dg-algebras $\sigma:E\rightarrow B$ and $f:E\rightarrow C$, such that the following is a recollement of triangulated categories:
$$\xymatrixcolsep{4pc}\xymatrix{\mathbf{D}(C,d)\ar[r]^{\sigma^{\ast}f_{\ast}}&\ar@<-3ex>[l]_{C\otimes_{E}^{\mathbb{L}}-}
\ar@<3ex>[l]^{\mathbb{R}\mathrm{Hom}_{E}(C, -)}\mathbf{D}(B,d)
\ar[r]^{\mathbf{G}}&\ar@<-3ex>[l]_{-\otimes^{\mathbb{L}}_{A}\mathbf{H}(A)}\ar@<3ex>[l]^{\mathbf{H}}\mathbf{D}(A,d).}$$

(2) If the triangle functor $\mathbf{G}:=-\otimes^{\mathbb{L}}_{B}U: \mathbf{D}(B,d)\rightarrow \mathbf{D}(A,d)$ admits a fully faithful left adjoint $j_{!} :\mathbf{D}(A,d) \rightarrow \mathbf{D}(B,d)$, then the given silting object is good.
\end{theorem}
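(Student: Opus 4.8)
The plan is to treat the two parts quite differently. For part (1), I would first invoke Theorem \ref{the3.5} (or, equivalently, Lemma \ref{lem3.4}) to obtain the TTF triple $(\mathrm{Tria}(Q), \mathrm{Ker}(\mathbf{G}), \mathrm{Im}(\mathbf{H}))$ in $\mathbf{D}(B,d)$, where $Q = \mathbf{H}(A)$. The essential point is to upgrade the left-hand equivalence $\mathbf{D}(E,d) \simeq \mathcal{Y} = \mathrm{Ker}(\mathbf{G})$ from Theorem \ref{the3.5} into one induced by a genuine homological epimorphism of dg-algebras, but now \emph{without} assuming $B$ is $k$-flat. For this I would check that $\mathcal{Y} = \mathrm{Ker}(\mathbf{G})$ is a smashing localizing class: it is localizing because it is the kernel of the coproduct-preserving triangle functor $\mathbf{G}$, and it is smashing because, by Lemma \ref{lem3.2}, $Q = \mathbf{H}(A)$ is compact, so the colocalization functor $i_\ast i^{!}$ attached to the torsion pair $(\mathrm{Tria}(Q), \mathcal{Y})$ preserves coproducts (equivalently $\mathcal{Y} = \mathrm{Tria}(Q)^{\perp_{>0}} \cap \dots = \{Q\}^{\perp}$ is closed under coproducts, as already noted in the proof of Lemma \ref{lem3.4}). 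Then Lemma \ref{lem4.4} applies verbatim: it produces the cofibrant $k$-flat $E$, the surjective quasi-isomorphism $\sigma : E \to B$, the map $f : E \to C$, and the fully faithful functor $\sigma^{\ast} f_{\ast} : \mathbf{D}(C,d) \to \mathbf{D}(B,d)$ with essential image $\mathcal{Y}^{\perp} = \mathcal{Y}$... more precisely, we must be careful: Lemma \ref{lem4.4} gives essential image $\mathcal{X}^{\perp}$ for a smashing class $\mathcal{X}$, so I would take $\mathcal{X} := {}^{\perp}\mathcal{Y} = \mathrm{Tria}(Q)$ (which is smashing because $\mathcal{Y}$ is closed under coproducts, again via compactness of $Q$), so that $\mathcal{X}^{\perp} = \mathrm{Tria}(Q)^{\perp} = \mathcal{Y}$ by Lemma \ref{lem3.4}. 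Splicing this equivalence into the recollement $(3.4)$ in place of the abstract $\mathbf{D}(E,d)$ gives the displayed recollement, with adjoints $C \otimes_E^{\mathbb L} -$ and $\mathbb{R}\mathrm{Hom}_E(C,-)$ on the left and $-\otimes_A^{\mathbb L}\mathbf{H}(A)$, $\mathbf{H}$ on the right, exactly as in Theorem \ref{the4.3} but now free of the flatness hypothesis. I would also record $\mathbf{G} \cong \mathbb{R}\mathrm{Hom}_B(\mathbf{H}(A),-)$ and $\mathbb{R}\mathrm{Hom}_B(\mathbf{H}(A),B) \cong U$ as in the proof of Theorem \ref{the4.3}, using Lemma \ref{lem3.3a}.

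For part (2), I would argue that a fully faithful left adjoint $j_{!}$ to $\mathbf{G}$ forces condition $(s3)$, i.e. $\mathrm{codim}_{\mathrm{add}(U)} A < \infty$ (equivalently $A \in \langle \mathrm{add}(U)\rangle$ in $\mathbf{D}(A,d)$), which is precisely what distinguishes a good silting object from an arbitrary one. The natural candidate is to look at $j_{!}(A) \in \mathbf{D}(B,d)$. Since $(j_{!}, \mathbf{G})$ is an adjoint pair with $\mathbf{G} = -\otimes_B^{\mathbb L} U$, and $\mathbf{G}$ sends $B \mapsto U$, while $j_{!}$ being fully faithful means the counit $j_{!}\mathbf{G} \to \mathrm{Id}$... rather the unit $\mathrm{Id} \to \mathbf{G} j_{!}$ is invertible, so $\mathbf{G} j_{!}(A) \cong A$. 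The key structural fact I want is that $j_{!}(A)$ is a \emph{compact} object of $\mathbf{D}(B,d)$: a fully faithful left adjoint of a coproduct-preserving functor sends compact objects to compact objects (because $\mathbf{G}$ preserves coproducts, $j_{!}$ sends a compact generator of $\mathbf{D}(A,d)$, namely $A$, to a compact object). Then $j_{!}(A) \in \mathbf{D}(B,d)^{c} = \langle \mathrm{add}(B)\rangle$ (using that $B$ is weak non-positive when $U$ is merely silting — wait, that needs care; $B$ weak non-positivity in Theorem \ref{the3.2} is stated for good $U$, so instead I should use that $\mathbf{D}(B,d)$ is compactly generated by $B$ with $B$ compact, hence $\mathbf{D}(B,d)^c$ is the thick subcategory generated by $B$, i.e. $j_{!}(A) \in \mathrm{thick}(B)$, so $\dim_{\mathrm{add}(B)} j_{!}(A) < \infty$ up to shift). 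Applying the triangle functor $\mathbf{G} = -\otimes_B^{\mathbb L} U$, which sends $B \mapsto U$ and hence $\mathrm{add}(B) \mapsto \mathrm{add}(U)$ and preserves the finite resolution, I get $\dim_{\mathrm{add}(U)} \mathbf{G} j_{!}(A) = \dim_{\mathrm{add}(U)} A < \infty$ (up to shift), which after dualizing to a coresolution is exactly $\mathrm{codim}_{\mathrm{add}(U)} A < \infty$, i.e. $(s3)$ holds and $U$ is good.

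The main obstacle, and the step I would spend the most care on, is the compactness claim in part (2): it is not quite automatic that a fully faithful left adjoint preserves compactness, since that direction of adjunction is the wrong one for the usual "$\mathbf{G}$ preserves coproducts $\Rightarrow$ right adjoint... " slogan. What actually saves us is that the \emph{right} adjoint $\mathbf{G}$ itself preserves coproducts (it is a left-derived tensor functor), so its left adjoint $j_{!}$ preserves compact objects by the standard argument $\mathrm{Hom}(j_{!}A, \coprod_i X_i) \cong \mathrm{Hom}(A, \mathbf{G}\coprod_i X_i) \cong \mathrm{Hom}(A, \coprod_i \mathbf{G} X_i) \cong \coprod_i \mathrm{Hom}(A, \mathbf{G} X_i) \cong \coprod_i \mathrm{Hom}(j_{!}A, X_i)$, where compactness of $A$ in $\mathbf{D}(A,d)$ is used in the middle. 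I would also need to be slightly careful about the finiteness-of-dimension versus membership-in-$\langle\mathrm{add}\rangle$ bookkeeping and the $[\mathbb{Z}]$-shift ambiguity, invoking the characterization $\langle \mathcal{C}\rangle = \{X \mid \dim_{\mathcal{C}} X < \infty\}[\mathbb{Z}] = \{X \mid \mathrm{codim}_{\mathcal{C}} X < \infty\}[\mathbb{Z}]$ recalled just before the theorem (applicable since $\mathrm{Add}(U) \subseteq U^{\perp_{>0}}$ by $(S2)$) to pass freely between resolution and coresolution dimension and thereby land on condition $(s3)$ in its stated form.
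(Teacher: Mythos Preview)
Your approach to part (1) is correct and essentially identical to the paper's: take $\mathcal{X}=\mathrm{Tria}(\mathbf{H}(A))$, use compactness of $\mathbf{H}(A)$ (Lemma \ref{lem3.2}) to see $\mathcal{X}$ is a smashing localizing class, apply Lemma \ref{lem4.4}, and splice the resulting equivalence $\mathbf{D}(C,d)\simeq\mathcal{Y}$ into the recollement (3.4). The paper cites \cite[4.4.16]{ni07} for the smashing property where you argue it directly; otherwise the arguments match.

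For part (2) your strategy is also the paper's: set $Q:=j_!(A)$, observe that $\mathbf{G}$ preserves coproducts so $j_!$ preserves compacts, hence $Q$ is compact in $\mathbf{D}(B,d)$ and lies in $\mathrm{thick}(B)$; apply $\mathbf{G}$ (sending $B\mapsto U$) to conclude $A\cong\mathbf{G}(Q)\in\langle\mathrm{add}(U)\rangle$. (Your route via $\mathrm{thick}(B)$ is in fact slightly cleaner than the paper's, which unpacks compactness through an explicit finite filtration from \cite[Tag 09QZ]{st16}.)

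However, there is a genuine gap in your final step. You correctly flag the $[\mathbb{Z}]$-shift ambiguity, but the characterization $\langle\mathcal{C}\rangle=\{X\mid\mathrm{codim}_{\mathcal{C}}X<\infty\}[\mathbb{Z}]$ you plan to invoke does \emph{not} remove it: it only yields $\mathrm{codim}_{\mathrm{add}(U)}A[m]<\infty$ for some $m$, which is not $(s3)$. The paper supplies the missing ingredient: from $(S1)$ one has $\dim_{\mathrm{Add}(A)}U\le n$, hence $H^i(U)=0$ for $i>0$, so
\[
\mathrm{Hom}_{\mathbf{D}(A,d)}(A,U[i])\cong H^i(U)=0\quad\text{for all }i>0,
\]
i.e.\ $A\in{}^{\perp_{>0}}\mathrm{add}(U)$. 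Combining this with $A\in\langle\mathrm{add}(U)\rangle$ and invoking the `small' version of \cite[Corollary~2.6(1)]{we13} (not just Proposition~2.5) gives $\mathrm{codim}_{\mathrm{add}(U)}A<\infty$ on the nose. Note that $(S2)$ alone, which you cite, does not give you $A\in{}^{\perp_{>0}}\mathrm{add}(U)$; you really need $(S1)$ here.
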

\begin{proof}

(1) Define $\mathcal{X}:=\mathrm{Tria}(\mathbf{H}(A))$. By Lemma \ref{lem3.2}, $\mathbf{H}(A)$ is compact in $\mathbf{D}(B,d)$. Then $\mathbf{H}(A)$ is compact in $\mathcal{X}$, and $\mathcal{X}^{\perp}$ is closed under small coproducts by \cite[4.4.3]{ni07}. From Lemma \ref{lem3.4},  $(\mathcal{X},\mathcal{Y})$ is a torsion pair in $\mathbf{D}(B,d)$, then by \cite[4.4.16]{ni07}, there is a smashing localization functor $L$, such that $\mathcal{X}$ is the kernel of $L$. Applying Lemma \ref{lem4.4}, we obtain a homological epimorphism $g=f\sigma^{-1}:B \rightarrow C$ in $\mathrm{HoDga}(k)$, represented by homomorphisms of dg-algebras $\sigma:E\rightarrow B$ and $f:E\rightarrow C$, such that $\sigma^{\ast}f_{\ast} : \mathbf{D}(C,d)\rightarrow \mathcal{Y}$ is an triangle equivalence. Furthermore, it is not difficult to convince oneself that then $C\otimes_{E}^{\mathbb{L}}-$ and $\mathbb{R}\mathrm{Hom}_{E}(C,-)$ are, respectively, left and right adjoint of the functor $\sigma^{\ast}f_{\ast}$. Thus, the recollement follows from $(3.4)$ in Theorem \ref{the3.5} and Theorem \ref{the4.3}.

(2) Let $U$ be a silting object. Recall that $\mathbf{G}$ and $\mathbf{H}$ stand for the triangle functors $-\otimes^{\mathbb{L}}_{B}U: \mathbf{D}(B,d)\rightarrow \mathbf{D}(A,d)$ and $\mathbb{R}\mathrm{Hom}_{A}(U,-):\mathbf{D}(A,d)\rightarrow \mathbf{D}(B,d)$, respectively. Suppose that $\mathbf{G}$ admits a fully faithful left adjoint
$j_{!} :\mathbf{D}(A,d)\rightarrow \mathbf{D}(B,d)$. We want to show that $U$ is a good silting object.
To prove that $U$ is good, it suffices to show $\mathrm{codim}_{\mathrm{add}(U)}A< \infty$.

First, we observe some consequences of the assumption that $j_{!}$ is fully faithful. Set
$Q :=j_{!}(A)$. Since the left derived functor $\mathbf{G}$ commutes with coproducts, we can easily show that the functor $j_{!}$ preserves compact objects. In particular, the complex $Q$ is compact in $\mathbf{D}(B,d)$. Since the functor $j_{!}$ is fully faithful, we conclude from \cite[Chapter IV, Section 3,
Theorem 1, p.90]{ma97} that the unit adjunction morphism $\eta : \mathrm{Id}_{\mathbf{D}(A,d)}
\rightarrow \mathbf{G}j^{!}$ is invertible. Thus, $A \cong \mathbf{G}(Q)$ in $\mathbf{D}(A,d)$. It is known that $Q$ is compact if and only if $Q$ is a direct summand of an object of $\mathbf{D}(B,d)$ which is represented by a dg-module $P$ which has a finite filtration $F_{\bullet}$ by dg-submodules such that $F_{i}P/F_{i-1}P$ are finite direct sums of shifts of $B$ (see \cite[Tag 09QZ]{st16}). Consider the distinguished triangle
$$\bigoplus F_{i}P\rightarrow \bigoplus F_{i}P\rightarrow P\rightsquigarrow\eqno(4.1)$$
by \cite[Lemma 13.1]{st16}. For $j\geq 0$ there is a triangle
$$F_{j-1}P\rightarrow F_{j}P\rightarrow F_{j}P/F_{j-1}P\rightsquigarrow, $$
where $F_{-1}P=0$ and $F_{j}P/F_{j-1}P\cong \bigoplus^{r}_{i=1}B[k]$ for some $r\in\mathbb{N}$ and $k\in \mathbb{Z}$. Since $\langle \mathrm{add}(U)\rangle$ is closed under extensions, shift, direct summands and finite direct sums, we get that $\mathbf{G}(F_{i}P)=F_{i}P\otimes^{\mathbb{L}}_{B}U\in \langle \mathrm{add}(U)\rangle$ for all $i\geq 0$. Therefore, applying the functor $\mathbf{G}$ to $(4.1)$, we get a triangle
$$\bigoplus \mathbf{G}(F_{i}P)\rightarrow \bigoplus \mathbf{G}(F_{i}P)\rightarrow \mathbf{G}(P)\rightsquigarrow,$$
which implies that $\mathbf{G}(P)\in\langle \mathrm{add}(U)\rangle$.
Since $A$ is isomorphic to a direct summand of $\mathbf{G}(P)$, we get
$A\in \langle \mathrm{add}(U)\rangle$ by the `small' version of \cite[Corollary 4.2]{we13}. On the other hand, we can w.l.o.g. assume $\mathrm{dim}_{\mathrm{Add}(A)}U\leq n$, then $H^{i}(U)=0$ for $i \notin \{-n+1,\ldots,-1,0\}$ by \cite[Remark 2.1]{br18}. Consequently, we have $\mathrm{Hom}_{\mathbf{D}(A,d)}(A ,U[i])\cong \mathrm{H}^{i}\mathbb{R}\mathrm{Hom}_{A}(A,U)\cong \mathrm{H}^{i}(U)=0$ for all $i>0$, that is, $A\in{^{\perp_{>0}}\mathrm{add}(U)}\cap \langle \mathrm{add}(U)\rangle$. Therefore,  $\mathrm{codim}_{\mathrm{add}(U)}A< \infty$ by the `small' version of \cite[Corollary 2.6(1)]{we13}.
\end{proof}

\bigskip
\textbf{4.2. When the induced dg-algebra is weak non-positive }
\bigskip

Let $R$ be a dg-algebra, $\mathcal{T}$  a full triangulated subcategory of $\mathbf{D}(R,d)$. From \cite[Section 3]{ch19}, $\mathcal{T}$ is said to be \emph{bireflective}
if the inclusion $\mathcal{T} \rightarrow \mathbf{D}(R,d)$ admits both a left adjoint and a right adjoint, and \emph{homological}
if there is a homological  epimorphism $\lambda : R \rightarrow C$ of dg-algebras such that $\lambda_{\ast}$ induces a triangle equivalence from $\mathbf{D}(C,d)$ to $\mathcal{T}$.
In this subsection, we consider when the induced dg-algebra $C$ is weak non-positive.

First, we have the following easy observation.

\begin{lemma} \label{lem4.7} Let $A$ be a dg-algebra, and let $i^{\ast}$ be the left adjoint of the inclusion $i_{\ast}:\mathcal{Y}\rightarrow \mathbf{D}(B,d)$. Then the induced  dg-algebra $C$ in Theorem \ref{the4.5} is weak non-positive if and only if $H^{i}(i^{\ast}(B))=0$ for every $i\geq1$.

\end{lemma}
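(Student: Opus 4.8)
The plan is to read off the cohomology of $C$ directly from that of $i^{\ast}(B)$ by using the homological epimorphism $g=f\sigma^{-1}:B\rightarrow C$ produced in Theorem \ref{the4.5}, together with the fact that $\sigma^{\ast}f_{\ast}:\mathbf{D}(C,d)\rightarrow \mathcal{Y}$ is a triangle equivalence sending $C$ to an object isomorphic to $i^{\ast}(B)$. First I would recall from Lemma \ref{lem4.4} that the essential image of $\sigma^{\ast}f_{\ast}$ is exactly $\mathcal{Y}=\mathcal{X}^{\perp}$, where $\mathcal{X}=\mathrm{Tria}(\mathbf{H}(A))$, and that under this equivalence the free module $C$ of rank one corresponds to $\sigma^{\ast}f_{\ast}(C)=C\otimes^{\mathbb{L}}_{E}B$. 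Since $\mathbf{D}(B,d)$ is compactly generated by $B$ and $i^{\ast}$ is the left adjoint of $i_{\ast}:\mathcal{Y}\rightarrow\mathbf{D}(B,d)$, the object $i^{\ast}(B)$ is a compact generator of $\mathcal{Y}$ (as already used in the proof of Theorem \ref{the3.5}), and by the universal property of the reflection $i^{\ast}$ it is the image of $B$ under the localization onto $\mathcal{Y}$; hence $i^{\ast}(B)\cong \sigma^{\ast}f_{\ast}(C)$ in $\mathbf{D}(B,d)$.

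Next I would translate the weak non-positivity of $C$ into a statement about $\mathrm{Hom}$ groups in $\mathbf{D}(C,d)$: by definition $C$ is weak non-positive iff $H^{i}(C)=0$ for all $i\geq 1$, and since $H^{i}(C)\cong H^{i}\mathbb{R}\mathrm{Hom}_{C}(C,C)\cong \mathrm{Hom}_{\mathbf{D}(C,d)}(C,C[i])$, this is equivalent to $\mathrm{Hom}_{\mathbf{D}(C,d)}(C,C[i])=0$ for all $i\geq 1$. Because $\sigma^{\ast}f_{\ast}$ is a triangle equivalence it is in particular fully faithful, so
$$\mathrm{Hom}_{\mathbf{D}(C,d)}(C,C[i])\cong \mathrm{Hom}_{\mathbf{D}(B,d)}\bigl(\sigma^{\ast}f_{\ast}(C),\sigma^{\ast}f_{\ast}(C)[i]\bigr)\cong \mathrm{Hom}_{\mathbf{D}(B,d)}\bigl(i^{\ast}(B),i^{\ast}(B)[i]\bigr).$$
Finally I would identify the right-hand side with $H^{i}$ of an appropriate $\mathbb{R}\mathrm{Hom}$ complex: since $i^{\ast}(B)$ may be replaced by a cofibrant representative, $\mathrm{Hom}_{\mathbf{D}(B,d)}(i^{\ast}(B),i^{\ast}(B)[i])\cong H^{i}\mathbb{R}\mathrm{Hom}_{B}(i^{\ast}(B),i^{\ast}(B))\cong H^{i}(i^{\ast}(B))$, using that $\mathbb{R}\mathrm{Hom}_{B}(i^{\ast}(B),i^{\ast}(B))$ is (quasi-isomorphic to) the underlying dg-module $i^{\ast}(B)$ itself because $i^{\ast}(B)$ carries a left action making it $\mathrm{DgEnd}_{B}(i^{\ast}(B))$-$B$-bimodule whose left self-$\mathrm{Hom}$ recovers $i^{\ast}(B)$; more cleanly, one can simply note that $i^{\ast}(B)\cong C$ as objects and that $H^{i}\mathbb{R}\mathrm{Hom}_B(i^\ast(B),i^\ast(B))\cong H^i(C)\cong H^i(i^\ast(B))$ under the equivalence. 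Chaining these isomorphisms gives $H^{i}(C)\cong H^{i}(i^{\ast}(B))$ for every $i$, whence the stated equivalence.

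The one point that needs genuine care — the main obstacle — is the identification $i^{\ast}(B)\cong\sigma^{\ast}f_{\ast}(C)$ and, relatedly, the claim that $H^{i}$ of the derived endomorphism complex of $i^{\ast}(B)$ computes $H^{i}(i^{\ast}(B))$ rather than merely its self-extension groups. The first is handled by observing that both objects represent the reflection of $B$ into the bireflective subcategory $\mathcal{Y}$: $\sigma^{\ast}f_{\ast}$ being fully faithful with image $\mathcal{Y}$ means $\sigma^{\ast}f_{\ast}(C)$ together with the unit $B\rightarrow\sigma^{\ast}f_{\ast}(C)$ is the coreflection $B\rightarrow i_{\ast}i^{\ast}(B)$, by uniqueness of adjoints. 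The second is automatic once we know $\mathrm{Hom}_{\mathbf{D}(C,d)}(C,C[i])\cong H^i(C)$ and $i^\ast(B)\cong C$; no separate computation of $\mathbb{R}\mathrm{Hom}_B$ is actually required, so I would phrase the argument to route entirely through the equivalence $\mathbf{D}(C,d)\simeq\mathcal{Y}$ and avoid that subtlety altogether.
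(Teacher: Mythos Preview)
Your proposal and the paper's proof share the same core idea: show that $i^{\ast}(B)\cong C$ as objects of $\mathbf{D}(B,d)$, after which $H^{i}(i^{\ast}(B))\cong H^{i}(C)$ and the statement is immediate. The difference lies in how this identification is established. The paper constructs explicit mutually inverse morphisms $f:i^{\ast}(B)\rightarrow C$ and $g:C\rightarrow i^{\ast}(B)$ by exploiting the unit $\varphi:B\rightarrow i^{\ast}(B)$ together with the homological epimorphism property of $\lambda$, and then invokes \cite[Proposition 1.4]{lv18} (that $\mathcal{Y}$ is both covering and enveloping) to conclude $fg=\mathrm{id}$ and $gf=\mathrm{id}$. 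Your uniqueness-of-adjoints argument is more conceptual: since $\sigma^{\ast}f_{\ast}$ is fully faithful with essential image $\mathcal{Y}$, the composite $\sigma^{\ast}f_{\ast}\circ(f^{\ast}\sigma_{\ast})$ is the reflection $i_{\ast}i^{\ast}$, and $(f^{\ast}\sigma_{\ast})(B)\cong C$ because $\sigma$ is a quasi-isomorphism. This avoids the external reference and is arguably cleaner.

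One wobble worth tightening: your chain through $\mathrm{Hom}_{\mathbf{D}(B,d)}(i^{\ast}(B),i^{\ast}(B)[i])$ does \emph{not} compute $H^{i}(i^{\ast}(B))$ in general---that would be $\mathrm{Hom}_{\mathbf{D}(B,d)}(B,i^{\ast}(B)[i])$---and you rightly flag this and retreat to the direct route. To make that direct route airtight, just say explicitly that $\sigma^{\ast}f_{\ast}$ preserves cohomology of underlying complexes: $f_{\ast}$ is restriction of scalars (so does not change the complex), and $\sigma^{\ast}=-\otimes^{\mathbb{L}}_{E}B$ is quasi-inverse to the restriction $\sigma_{\ast}$ along the quasi-isomorphism $\sigma$, hence also preserves cohomology. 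With that sentence in place your argument is complete and the self-$\mathrm{Hom}$ detour can be deleted entirely.
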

\begin{proof} Let $Y:=i^{\ast}(B)$. We only need to prove that $i^{\ast}(B)\cong C$. Since $\mathbf{D}(C,d)\rightarrow \mathcal{Y}$ is a triangle equivalence, we can w.l.o.g., view $Y$ as a dg-$C$-module.
Let $B\stackrel{\varphi}\rightarrow Y$ be the unit adjunction morphism associated to the adjoint pair $(i^{\ast},i_{\ast})$. Since $C$ belongs to $\mathcal{Y}$ we have that
$\mathrm{Hom}_{\mathcal{Y}}(Y,C)\cong \mathrm{Hom}_{\mathbf{D}(B,d)}(B,C)$. So there is a unique morphism $f:Y\rightarrow C$ such that $\lambda=f\circ\varphi$. On the other hand, since $\lambda:B\rightarrow C$ is a  homological epimorphism, the natural morphism $\mathbb{R}\mathrm{Hom}_{C}(C,Y)\rightarrow \mathbb{R}\mathrm{Hom}_{B}(C,Y)$ is an isomorphism. Therefore, we get that $$\mathrm{Hom}_{\mathbf{D}(B,d)}(C,Y)\cong\mathrm{Hom}_{\mathbf{D}(C,d)}(C,Y)\cong H^{0}Y\cong\mathrm{Hom}_{\mathbf{D}(B,d)}(B,Y).$$
 Hence, we conclude that there is $g\in \mathrm{Hom}_{\mathbf{D}(B,d)}(C,Y)$, such that $g\circ\lambda=\varphi$. Consequently, $g\circ f\circ\varphi=\varphi$ and $\lambda=f\circ g\circ\lambda$. Since $\mathcal{Y}$ is a triangulated subcategory and the inclusion functor of $\mathcal{Y}$ into $\mathbf{D}(B,d)$ admits both a left and a right adjoint, by \cite[Proposition 1.4]{lv18}, we get that $\mathcal{Y}$ is both covering and enveloping. We conclude that $f\circ g=\mathrm{id}_{C}$ and $g\circ f=\mathrm{id}_{Y}$. Thus $C\cong Y= i^{\ast}(B)$.
\end{proof}

The following is the another main result in this section.

\begin{theorem}\label{the4.8} Let $A$ be a dg-algebra, $U$ a good $n$-silting object
 in $\mathbf{D}(A,d)$, and let $B=\mathrm{DgEnd}_{A}(U)$. Then there exist a dg-algebra $C$ and a recollement of triangulated categories
$$\xymatrixcolsep{3pc}\xymatrix{\mathbf{D}(C,d)\ar[r]^{\lambda_{\ast}}
&\ar@<-3ex>[l]_{}
\ar@<3ex>[l]^{}\mathbf{D}(B,d)
\ar[r]^{\mathbf{G}}&\ar@<-3ex>[l]_{-\otimes^{\mathbb{L}}_{A}\mathbf{H}(A)}\ar@<3ex>[l]^{\mathbf{H}}\mathbf{D}(A,d)}$$
such that $\lambda:B\rightarrow C$ is a homological epimorphism. Moreover, $C$ is weak non-positive if and only if $H^{i}(U\otimes^{\mathbb{L}}_{A}\mathbb{R}\mathrm{Hom}_{A}(U,A))=0$ for $i\geq 2$, or equivalently, $H^{i}(U\otimes^{\mathbb{L}}_{A}\mathbb{R}\mathrm{Hom}_{B^{op}}(U,B))=0$ for $i\geq 2$.
\end{theorem}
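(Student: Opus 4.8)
The recollement and the homological epimorphism $\lambda:B\to C$ are already supplied by Theorem \ref{the4.5}(1) (a good $n$-silting object is in particular a good silting object, and $B$ is weak non-positive by Theorem \ref{the3.2}), so the plan is to establish only the ``moreover'' part. By Lemma \ref{lem4.7} we have $C\cong i^{\ast}(B)$, where $i^{\ast}$ is the left adjoint of the inclusion $i_{\ast}:\mathcal{Y}\to\mathbf{D}(B,d)$, so it suffices to decide when $H^{i}(i^{\ast}(B))=0$ for all $i\geq 1$.

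First I would place $B$ in the canonical triangle of the torsion pair $(\mathrm{Tria}(Q),\mathcal{Y})$ coming from the TTF triple of Lemma \ref{lem3.4}, say
$$X\longrightarrow B\longrightarrow i_{\ast}i^{\ast}(B)\rightsquigarrow,\qquad X\in\mathrm{Tria}(Q).$$
Since $i_{\ast}i^{\ast}(B)\cong i_{\ast}(C)$ has the same cohomology groups as $C$, and $H^{i}(B)=0$ for $i>0$ by Theorem \ref{the3.2}, the long exact cohomology sequence of this triangle gives $H^{i}(C)\cong H^{i+1}(X)$ for every $i\geq 1$. Hence $C$ is weak non-positive if and only if $H^{j}(X)=0$ for all $j\geq 2$, and everything now reduces to identifying $X$.

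The claim I would prove is $X\cong U\otimes^{\mathbb{L}}_{A}\mathbb{R}\mathrm{Hom}_{A}(U,A)$. By Theorem \ref{the3.5}, $\mathbf{G}$ restricts to a triangle equivalence $\mathrm{Tria}(Q)\to\mathbf{D}(A,d)$ (from the facts that $\mathbf{G}j_{\ast}$ is an equivalence, $\mathbf{G}j_{\ast}j^{!}\cong\mathbf{G}$ and $j^{!}j_{!}\cong\mathrm{Id}$ with $j_{!}$ the inclusion), and in particular it reflects isomorphisms between objects of $\mathrm{Tria}(Q)$. Applying $\mathbf{G}$ to the triangle above and using $i_{\ast}(C)\in\mathcal{Y}=\mathrm{Ker}(\mathbf{G})$ gives $\mathbf{G}(X)\cong\mathbf{G}(B)\cong U$. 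On the other hand, regarding $Q=\mathbb{R}\mathrm{Hom}_{A}(U,A)=\mathbf{H}(A)$ as the $A$-$B$-bimodule that it is, the coproduct-preserving functor $-\otimes^{\mathbb{L}}_{A}Q:\mathbf{D}(A,d)\to\mathbf{D}(B,d)$ sends $A$ to $Q$, hence sends $\mathrm{Tria}(A)=\mathbf{D}(A,d)$ into $\mathrm{Tria}(Q)$; so $U\otimes^{\mathbb{L}}_{A}Q\in\mathrm{Tria}(Q)$. Moreover
$$\mathbf{G}(U\otimes^{\mathbb{L}}_{A}Q)\cong U\otimes^{\mathbb{L}}_{A}(Q\otimes^{\mathbb{L}}_{B}U)\cong U\otimes^{\mathbb{L}}_{A}\mathbb{R}\mathrm{Hom}_{B}(\mathbf{H}(A),\mathbf{H}(A))\cong U\otimes^{\mathbb{L}}_{A}\mathbb{R}\mathrm{Hom}_{A}(A,A)\cong U,$$
using the isomorphism $Z\otimes^{\mathbb{L}}_{B}U\cong\mathbb{R}\mathrm{Hom}_{B}(\mathbf{H}(A),Z)$ established in the proof of Lemma \ref{lem3.3} and then Lemma \ref{lem3.3a}. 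Thus $X$ and $U\otimes^{\mathbb{L}}_{A}Q$ both lie in $\mathrm{Tria}(Q)$ and have isomorphic image under $\mathbf{G}$, whence $X\cong U\otimes^{\mathbb{L}}_{A}Q$; this yields the first cohomological criterion.

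It remains to see that $\mathbb{R}\mathrm{Hom}_{B^{op}}(U,B)$ may replace $\mathbb{R}\mathrm{Hom}_{A}(U,A)$ in the condition. For this I would note that Lemma \ref{lem3.3a}, applied with $X=A$, $Y=U$ and using $\mathbf{H}(U)=\mathbb{R}\mathrm{Hom}_{A}(U,U)=B$, gives $\mathbb{R}\mathrm{Hom}_{B}(\mathbf{H}(A),B)\cong\mathbb{R}\mathrm{Hom}_{A}(A,U)=U$ as $B$-$A$-bimodules, while $\mathbf{H}(A)$ is compact in $\mathbf{D}(B,d)$ by Lemma \ref{lem3.2}; applying the duality $\mathbb{R}\mathrm{Hom}_{B^{op}}(-,B)$ on perfect complexes to this isomorphism yields $\mathbb{R}\mathrm{Hom}_{B^{op}}(U,B)\cong\mathbf{H}(A)=\mathbb{R}\mathrm{Hom}_{A}(U,A)$ as $A$-$B$-bimodules, and tensoring with $U$ over $A$ gives $U\otimes^{\mathbb{L}}_{A}\mathbb{R}\mathrm{Hom}_{B^{op}}(U,B)\cong U\otimes^{\mathbb{L}}_{A}\mathbb{R}\mathrm{Hom}_{A}(U,A)$, so the two cohomological conditions coincide. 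The step I expect to be most delicate is the identification $X\cong U\otimes^{\mathbb{L}}_{A}\mathbb{R}\mathrm{Hom}_{A}(U,A)$: it requires extracting from Theorem \ref{the3.5} that the restriction of $\mathbf{G}$ to $\mathrm{Tria}(Q)$ is an equivalence, so that an isomorphism after applying $\mathbf{G}$ descends to the objects, together with careful bookkeeping of the left/right module structures so that every derived tensor and Hom is formed over the correct ring.
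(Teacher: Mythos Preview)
Your argument is correct, but it takes a detour the paper avoids. The recollement you invoke from Theorem~\ref{the4.5}(1) already specifies the functors $j^{!}=\mathbf{G}$ and $j_{!}=-\otimes^{\mathbb{L}}_{A}\mathbf{H}(A)$, so the canonical triangle for $B$ reads
\[
\mathbf{G}(B)\otimes^{\mathbb{L}}_{A}\mathbf{H}(A)\longrightarrow B\longrightarrow i_{\ast}i^{\ast}(B)\rightsquigarrow
\]
and the first term is \emph{literally} $U\otimes^{\mathbb{L}}_{A}\mathbb{R}\mathrm{Hom}_{A}(U,A)$. This is what the paper does: one line, no need to pass through the equivalence $\mathbf{G}|_{\mathrm{Tria}(Q)}$ and compare images. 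Your indirect identification of $X$ is valid (the restriction of $\mathbf{G}$ to $\mathrm{Tria}(Q)$ is indeed an equivalence for the reasons you give), but it reproves something already encoded in the recollement diagram.

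For the equivalence $\mathbb{R}\mathrm{Hom}_{A}(U,A)\cong\mathbb{R}\mathrm{Hom}_{B^{op}}(U,B)$, your approach differs genuinely from the paper's and is arguably slicker. The paper runs an induction along the sequence of triangles coming from $\mathrm{codim}_{\mathrm{add}(U)}A\le n$: apply $\Phi=\mathbb{R}\mathrm{Hom}_{A}(-,U)$ and compare $\mathbb{R}\mathrm{Hom}_{A}(U,-)$ with $\mathbb{R}\mathrm{Hom}_{B^{op}}(\Phi(-),\Phi(U))$ via five-lemma style diagrams. You instead use that $\mathbf{H}(A)$ is perfect over $B$ and invoke double duality $\mathbb{R}\mathrm{Hom}_{B^{op}}(\mathbb{R}\mathrm{Hom}_{B}(-,B),B)\cong\mathrm{Id}$ on perfect objects, combined with the identification $\mathbb{R}\mathrm{Hom}_{B}(\mathbf{H}(A),B)\cong U$ from Lemma~\ref{lem3.3a}. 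Your route is shorter but, as you note yourself, it requires checking that these isomorphisms are compatible with the left $A$-action needed to tensor with $U$; the paper's inductive argument makes that compatibility automatic since every step is functorial in the second variable of $\mathbb{R}\mathrm{Hom}$.
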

\begin{proof} From Theorem \ref{the3.5} and its proof, there exists a recollement of derived categories
$$\xymatrixcolsep{4pc}\xymatrix{\mathcal{Y}\ar[r]^-{i_{\ast}=inc}
&\ar@<-3ex>[l]_-{i^{\ast}}
\ar@<3ex>[l]^-{i^{!}}\mathbf{D}(B,d)
\ar[r]^{\mathbf{G}}&\ar@<-3ex>[l]_{-\otimes^{\mathbb{L}}_{A}\mathbf{H}(A)}\ar@<3ex>[l]^{\mathbf{H}}\mathbf{D}(A,d)}.$$
Hence there is a triangle both in $\mathbf{D}(B,d)$ and $\mathbf{D}(B^{op},d)$:
$$\mathbf{G}(B)\otimes^{\mathbb{L}}_{A}\mathbf{H}(A)\rightarrow B\rightarrow i_{\ast}i^{\ast}(B_{B})\rightsquigarrow.$$
Applying the cohomology functor $H^{j}$ to this triangle, since $B$ is non-positive, one gets $H^{j}(i^{\ast}(B))\cong H^{j+1}(\mathbf{G}(B)\otimes^{\mathbb{L}}_{A}\mathbf{H}(A))\cong H^{j+1}(U\otimes^{\mathbb{L}}_{A}\mathbb{R}\mathrm{Hom}_{A}(U,A))$ for $j\geq 1$.

In the following we show that $\mathbb{R}\mathrm{Hom}_{A}(U,A)\simeq \mathbb{R}\mathrm{Hom}_{B^{op}}(U,B)$ in $\mathbf{D}(A^{op},d)$. In fact, since $U$ is a good $n$-silting object, there is a sequence of triangles in $\mathbf{D}(A,d)$
 $$A_{i}\longrightarrow U_{i}\longrightarrow A_{i+1}\rightsquigarrow~~ \text{with}~~ 0\leq i\leq n, $$
 such that $U_{i}\in \mathrm{add}(U)$, $A_{0}=A$ and $A_{n+1}=0$. Applying the functor $\Phi: \mathbb{R}\mathrm{Hom}_{A}(-,U_{A})$ to these triangles, we obtain another sequence of triangles
  $\Phi(A_{i+1})\longrightarrow \Phi(U_{i})\longrightarrow \Phi(A_{i})\rightsquigarrow~~ \text{with}~~ 0\leq i\leq n$. Therefore, we can construct the commutative diagram:
$$\scalebox{0.9}[0.9]{\xymatrix{
  \mathbb{R}\mathrm{Hom}_{A}(U,A_{n-1}) \ar[d]_{} \ar[r]^{} & \mathbb{R}\mathrm{Hom}_{A}(U,U_{n-1}) \ar[d]_{\simeq} \ar[r]^{} & \mathbb{R}\mathrm{Hom}_{A}(U,U_{n}) \ar[d]_{\simeq} \ar@{~>}[r]^{} &  \\
  \mathbb{R}\mathrm{Hom}_{B^{op}}(\Phi(A_{n-1}),\Phi(U)) \ar[r]^{} & \mathbb{R}\mathrm{Hom}_{B^{op}}(\Phi(U_{n-1}),\Phi(U)) \ar[r]^{} & \mathbb{R}\mathrm{Hom}_{B^{op}}(\Phi(U_{n}),\Phi(U)) \ar@{~>}[r]^{} &.    }}$$
 Note that we always have natural isomorphisms $B\cong\mathbb{R}\mathrm{Hom}_{A}(U,U)\cong \mathbb{R}\mathrm{Hom}_{B^{op}}(\Phi(U),\Phi(U))\cong\mathbb{R}\mathrm{Hom}_{B^{op}}(B,B)$. Then the isomorphisms in the second and third columns are due to  $U_{i}\in \mathrm{add}(U)$ for $0\leq i\leq n$. This implies that $ \mathbb{R}\mathrm{Hom}_{A}(U,A_{n-1})\cong \mathbb{R}\mathrm{Hom}_{B^{op}}(\Phi(A_{n-1}),\Phi(U))$. So we conclude that
$\mathbb{R}\mathrm{Hom}_{A}(U,A)\cong \mathbb{R}\mathrm{Hom}_{B^{op}}(\Phi(A),\Phi(U))=\mathbb{R}\mathrm{Hom}_{B^{op}}(U,B)$.
Thus, the equivalence follows from Lemma \ref{lem4.7}.
\end{proof}

If we specialising Theorem \ref{the4.8} to the case that $U$ is a good 1-silting object, then it is easy to check $H^{i}(U\otimes^{\mathbb{L}}_{A}\mathbb{R}\mathrm{Hom}_{A}(U,A))=0$ for $i\geq 2$, and we obtain the following corollary.
\begin{corollary}\label{cor4.6} Let $A$ be a dg-algebra, $U$ a good $1$-silting object
 in $\mathbf{D}(A,d)$, and let $B=\mathrm{DgEnd}_{A}(U)$. Then the dg-algebra $C$ induced in Theorem \ref{the4.5} is weak non-positive.
\end{corollary}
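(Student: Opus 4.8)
The plan is to read everything off Theorem \ref{the4.8}. That theorem already supplies the recollement together with the homological epimorphism $\lambda : B \to C$, and it asserts that $C$ is weak non-positive if and only if $H^{i}(U\otimes^{\mathbb{L}}_{A}\mathbb{R}\mathrm{Hom}_{A}(U,A))=0$ for all $i\geq 2$. So the entire content of the corollary is the verification of this vanishing under the extra hypothesis that $U$ is $1$-silting. Throughout, write $P:=\mathbb{R}\mathrm{Hom}_{A}(U,A)=\mathbf{H}(A)$, and recall from Theorem \ref{the3.2} that $B$ is weak non-positive, so that every object of $\mathrm{add}(B)$ has cohomology concentrated in degrees $\leq 0$.

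The first step is to bound the cohomological amplitude of $P$. Since $U$ is good $1$-silting we have $\mathrm{codim}_{\mathrm{add}(U)}A\leq 1$, hence a triangle $A\to C_{0}\to C_{1}\rightsquigarrow$ in $\mathbf{D}(A,d)$ with $C_{0},C_{1}\in\mathrm{add}(U)$. Applying the triangle functor $\mathbf{H}$ (and using $\mathbf{H}(U)\cong B$, so that $\mathbf{H}(C_{i})\in\mathrm{add}(B)$, just as in the proof of Lemma \ref{lem3.2}) gives a triangle $P\to\mathbf{H}(C_{0})\to\mathbf{H}(C_{1})\rightsquigarrow$. Since $H^{j}(\mathbf{H}(C_{0}))=H^{j}(\mathbf{H}(C_{1}))=0$ for all $j\geq 1$, the long exact cohomology sequence of a rotation of this triangle forces $H^{i}(P)=0$ for all $i\geq 2$; that is, $P$ has cohomology concentrated in degrees $\leq 1$.

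The second step pushes this bound through $-\otimes^{\mathbb{L}}_{A}P$. Since $U$ is $1$-silting we also have $\dim_{\mathrm{Add}(A)}U\leq 1$, hence a triangle $C'_{1}\to C'_{0}\to U\rightsquigarrow$ with $C'_{0},C'_{1}\in\mathrm{Add}(A)$. Applying $-\otimes^{\mathbb{L}}_{A}P$ produces the triangle $C'_{1}\otimes^{\mathbb{L}}_{A}P\to C'_{0}\otimes^{\mathbb{L}}_{A}P\to U\otimes^{\mathbb{L}}_{A}P\rightsquigarrow$. As each $C'_{j}$ is, in $\mathbf{D}(A,d)$, a direct summand of a coproduct $A^{(I_{j})}$, the object $C'_{j}\otimes^{\mathbb{L}}_{A}P$ is a direct summand of $P^{(I_{j})}$, so it again has cohomology concentrated in degrees $\leq 1$. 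One final application of the long exact cohomology sequence yields $H^{i}(U\otimes^{\mathbb{L}}_{A}P)=0$ for all $i\geq 2$, and Theorem \ref{the4.8} then gives that $C$ is weak non-positive.

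I do not expect any serious obstacle: the argument introduces nothing beyond Theorem \ref{the4.8} and the weak non-positivity of $B$. The only points that need to be handled with a little care --- and the only place I would slow down in a full write-up --- are the degree shifts in the two long exact sequences and the remark that passing to $\mathrm{Add}(A)$-coproducts and to direct summands does not enlarge the amplitude of $P$ under $-\otimes^{\mathbb{L}}_{A}P$, which is true precisely because these objects are assembled from the free module $A$.
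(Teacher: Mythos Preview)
Your proposal is correct and follows exactly the approach the paper takes: reduce to Theorem \ref{the4.8} and verify the vanishing $H^{i}(U\otimes^{\mathbb{L}}_{A}\mathbb{R}\mathrm{Hom}_{A}(U,A))=0$ for $i\geq 2$ in the $1$-silting case. The paper simply asserts this vanishing is ``easy to check'' without further detail; your two-step cohomological amplitude argument (first bounding $\mathbf{H}(A)$ via $\mathrm{codim}_{\mathrm{add}(U)}A\leq 1$ and the weak non-positivity of $B$, then bounding $U\otimes^{\mathbb{L}}_{A}\mathbf{H}(A)$ via $\dim_{\mathrm{Add}(A)}U\leq 1$) is the natural way to fill in what the paper omits.
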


Let $R$ be a ring. From \cite[Lemma 5.5]{ch19}, if a left $R$-module $T$ is a good $n$-tilting module, then $T$ as a right $B$-module is an $n$-weak tilting module (see \cite[Definition 4.1]{ch19}), where $B$ is the endomorphism ring of $T$. Similarly, we introduce here the notation of $n$-weak silting objects, and show that if $U_{A}$ is a good $n$-silting object, then $_{B}U$ is $n$-weak silting whenever $A$ is weak non-positive, where $B=\mathrm{DgEnd}_{A}(U)$.

\begin{definition}\label{def4.9} Let $R$ be a dg-algebra.
An object $M$ in $\mathbf{D}(R^{op},d)$, or equivalently, a dg-$R^{op}$-module $M$ is called \emph{$n$-weak silting} provided that it satisfies the following conditions:

$(w1)$. dim$_{\mathrm{add}(R)} (M) \leq n$;

$(w2)$. M$^{(I)} \in M^{\perp_{>0}}$ for every set $I$,  and

$(w3)$. codim$_{\mathrm{Prod}(M)} R \leq n$.
\end{definition}

If an $n$-weak silting object $M$ satisfies $\mathrm{Prod}(_{R}M)=\mathrm{add}(_{R}M)$, then $_{R}M$ is a small $n$-silting object. On the other hand, small $n$-silting objects are always $n$-weak silting.

Let $S:=\mathrm{DgEnd}_{R^{op}}(M)$
\begin{align*}
 & \mathcal{Y}':=\{Y\in \mathbf{D}(R^{op},d)\mid
   \mathrm{Hom}_{\mathbf{D}(R^{op},d)}(M,Y[i])=0~\text{for all}~i\in \mathbb{Z} \}\\
 &G:={_{R}M\otimes^{\mathbb{L}}_{S}-}: \mathbf{D}(S^{op},d)\rightarrow \mathbf{D}(R^{op},d) \  \ \ \   \    H:=\mathbb{R}\mathrm{Hom}_{R^{op}}(M,-):\mathbf{D}(R^{op},d)\rightarrow \mathbf{D}(S^{op},d)
\end{align*}
Then $\mathcal{Y}'=\mathrm{Ker}(H)$. If $_{R}M$ satisfies $(w1)$, then $M$ is a compact and cofibrant object in $\mathbf{D}(R^{op},d))$. It follows that $\mathcal{Y}'$ is a bireflective subcategory of  $\mathbf{D}(R^{op},d))$ by \cite[Chapter III, Theorem 2.3; Chapter IV, Proposition 1.1]{be07}. If $_{R}M$ satisfies both (w1) and (w2), then $S$ is weak non-positive and the pair $(G,H)$ induces a triangle equivalence: $\mathbf{D}(S^{op},d)\overset{\simeq}{\rightarrow} \mathrm{Tria}(_{R}M)$. Thus we have the following result which shows that an $n$-weak silting object in $\mathbf{D}(R^{op},d)$ will always induce a recollement among derived categories of dg-algebras such that $\mathcal{Y}'$ is homological.

\begin{proposition}\label{pro4.10} Suppose the dg-$R^{op}$-module $M\in \mathbf{D}(R^{op},d)$  satisfies $(w1)$ and $(w2)$.  Then there exist a dg-algebra $C$ and a recollement of triangulated categories
$$\xymatrixcolsep{3pc}\xymatrix{\mathbf{D}(C^{op},d)\ar[r]^{\lambda_{\ast}}
&\ar@<-3ex>[l]_{}
\ar@<3ex>[l]^{}\mathbf{D}(R^{op},d)
\ar[r]^{H}&\ar@<-3ex>[l]_{G}\ar@<3ex>[l]^{\mathbb{R}\mathrm{Hom}_{S^{op}}(H(R),-)}\mathbf{D}(S^{op},d)}$$
such that $\lambda:R\rightarrow C$ is a homological epimorphism.
\end{proposition}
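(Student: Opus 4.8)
The plan is to run, inside $\mathbf{D}(R^{op},d)$, the same chain of arguments as in Lemma~\ref{lem3.4}, Theorem~\ref{the3.5} and Theorem~\ref{the4.5}(1), now with $R^{op}$, $S^{op}$ and $M$ playing the roles that $B$, $A$ and $\mathbf{H}(A)$ played there. The key observation is that, by the discussion immediately preceding the statement, $M$ sits inside $\mathbf{D}(R^{op},d)$ in exactly the way $\mathbf{H}(A)$ sat inside $\mathbf{D}(B,d)$: condition $(w1)$ makes $M$ compact and cofibrant, conditions $(w1)$ and $(w2)$ together make $S=\mathrm{DgEnd}_{R^{op}}(M)$ weak non-positive and promote $G$ to a triangle equivalence $\mathbf{D}(S^{op},d)\overset{\simeq}{\to}\mathrm{Tria}(M)$ whose quasi-inverse is the restriction of $H$, and $\mathcal{Y}'=\mathrm{Ker}(H)=M^{\perp}$ is bireflective. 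So essentially all the ingredients are already at hand.

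First I would assemble a TTF triple in $\mathbf{D}(R^{op},d)$. Since $M$ is compact, $\mathcal{Y}'=M^{\perp}$ is closed under small coproducts and under small products, so, exactly as in the proof of Lemma~\ref{lem3.4}, \cite[Proposition 5.14]{be04} furnishes a TTF triple $(\mathcal{X},\mathcal{Y}',\mathcal{Z}')$; using that $G$ is a triangle equivalence onto $\mathrm{Tria}(M)$ one then identifies the left-hand class $\mathcal{X}$ with $\mathrm{Tria}(M)$. Passing from this TTF triple to a recollement through the standard correspondence \cite[Section 9.2]{ne01}, and transporting the right-hand term $\mathrm{Tria}(M)$ to $\mathbf{D}(S^{op},d)$ along $G$, I obtain a recollement of $\mathbf{D}(R^{op},d)$ relative to $\mathcal{Y}'$ and $\mathbf{D}(S^{op},d)$; computing the six functors — by the same argument as in Theorems~\ref{the3.5} and~\ref{the4.3}, using that $G$ sends the rank-one free module to $M$ and that $H(R)=\mathbb{R}\mathrm{Hom}_{R^{op}}(M,R)$ — shows that the quotient functor is $H$, that its left adjoint is $G$, and that its right adjoint is $\mathbb{R}\mathrm{Hom}_{S^{op}}(H(R),-)$.

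Next I would realise $\mathcal{Y}'$ as the derived category of a dg-algebra by a homological epimorphism, following Theorem~\ref{the4.5}(1). Since $\mathrm{Tria}(M)$ is generated by the single compact object $M$, it is a smashing localizing class by \cite[4.4.3 and 4.4.16]{ni07}, so Lemma~\ref{lem4.4}, applied to the dg-algebra $R^{op}$ and to the class $\mathrm{Tria}(M)$, produces a homological epimorphism $g=f\sigma^{-1}:R^{op}\to C^{op}$ in $\mathrm{HoDga}(k)$, represented by dg-algebra maps $\sigma:E\to R^{op}$ and $f:E\to C^{op}$, for which $\sigma^{\ast}f_{\ast}:\mathbf{D}(C^{op},d)\to\mathcal{Y}'$ is a triangle equivalence with left adjoint $C^{op}\otimes^{\mathbb{L}}_{E}-$ and right adjoint $\mathbb{R}\mathrm{Hom}_{E}(C^{op},-)$. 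Taking opposites, $\lambda:=g^{op}:R\to C$ is again a homological epimorphism, and substituting $\mathbf{D}(C^{op},d)$ for $\mathcal{Y}'$ — with $\lambda_{\ast}=\sigma^{\ast}f_{\ast}$ as the inclusion-type functor — in the recollement built above gives the asserted recollement.

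The step I expect to be the main obstacle is the second one: checking that, once the equivalence $\mathbf{D}(S^{op},d)\simeq\mathrm{Tria}(M)$ has been glued onto the recollement coming from \cite[Section 9.2]{ne01}, the six resulting functors coincide with the ones named in the statement, so that in particular the right adjoint of $H$ really is $\mathbb{R}\mathrm{Hom}_{S^{op}}(H(R),-)$. This is the only point at which the handedness conventions and the identifications $G(S^{op})\cong M$ and $H(R)=\mathbb{R}\mathrm{Hom}_{R^{op}}(M,R)$ have to be tracked explicitly; everything else is a direct appeal to results already established in Sections~3 and~4.
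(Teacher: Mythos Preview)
Your proposal is correct and follows the same underlying strategy as the paper, but you take a longer route to the right-hand half of the recollement: you rebuild the TTF triple $(\mathrm{Tria}(M),\mathcal{Y}',\mathcal{Z}')$ from scratch via \cite[Proposition~5.14]{be04} and then convert it into a recollement as in Lemma~\ref{lem3.4} and Theorem~\ref{the3.5}, whereas the paper simply observes that $M$ is compact and cofibrant in $\mathbf{D}(R^{op},d)$ and quotes \cite[Lemma~3.2]{ba13} (equivalently, the paper's own Lemma~\ref{lem5.7}) to obtain the recollement $\mathcal{Y}'\;\text{---}\;\mathbf{D}(R^{op},d)\;\text{---}\;\mathbf{D}(S^{op},d)$ with functors $G$, $H$ and $\mathbb{R}\mathrm{Hom}_{S^{op}}(H(R),-)$ in one stroke. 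For the left-hand half both arguments are essentially identical: the paper takes a cofibrant replacement $\sigma:E\to R$ and invokes \cite[Theorem in \S4]{ni09} to produce $f:E\to C$, while you package the same step as an appeal to Lemma~\ref{lem4.4}; the only cosmetic difference is that the paper replaces $R$ and passes to $\mathbf{D}(R^{op},d)$ by restriction, avoiding the explicit ``take opposites'' move you make. Your worry about identifying the right adjoint of $H$ with $\mathbb{R}\mathrm{Hom}_{S^{op}}(H(R),-)$ is unfounded: this is exactly the content of the cited \cite[Lemma~3.2]{ba13}/Lemma~\ref{lem5.7}, so no separate verification is needed.
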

\begin{proof} Note that $M$ is a compact and cofibrant dg-$R^{op}$-module, by \cite[Lemma 3.2]{ba13}, there is a recollement:
$$\xymatrixcolsep{3pc}\xymatrix{\mathcal{Y}'\ar[r]^-{i_{\ast}=inc}
&\ar@<-3ex>[l]_-{i^{\ast}}
\ar@<3ex>[l]^-{i_{\ast}}\mathbf{D}(R^{op},d)
\ar[r]^{H}&\ar@<-3ex>[l]_{G}\ar@<3ex>[l]^{}\mathbf{D}(S^{op},d).}$$
 Let $\sigma:E\rightarrow R$ be a cofibrant replacement of $R$ in $\mathrm{HoDga}(k)$. Then $E$ is  $k$-flat and the restriction functor $\sigma_{\ast}: \mathbf{D}(R^{op},d)\rightarrow\mathbf{ D}(E^{op},d)$ is a triangle equivalence with $\sigma^{\ast}=-\otimes^{\mathbb{L}}_{E}R$ as a quasi-inverse. So by \cite[Theorem in \S4]{ni09} there exists a morphism of dg-algebras $f:E\rightarrow C$ such that $C\otimes^{\mathbb{L}}_{E}C\rightarrow C$ is a quasi-isomorphism. Hence $\lambda:=f\sigma^{-1}:R \rightarrow C$ is a homological epimorphism in $\mathrm{HoDga}(k)$. Then by the proof of Theorem \ref{the4.5}(1), we see that $\mathcal{Y}'$ is the essential image of the fully faithful functor $\sigma^{\ast}f_{\ast}:\mathbf{D}(C^{op},d)\rightarrow \mathbf{D}(R^{op},d)$. Thus we obtain the desired recollement.
\end{proof}

Let $A$ be a dg-algebra, $U\in \mathbf{D}(A,d)$ a good $n$-silting object, and let $B=\mathrm{DgEnd}_{A}(U)$.
Now we point out that each good silting object can produce a weak silting object over weak non-positive dg-algebras.

\begin{lemma}\label{lem4.11} If the dg-algebra $A$ is weak non-positive, then the object $U\in \mathbf{D}(B^{op},d)$ is $n$-weak silting.
\end{lemma}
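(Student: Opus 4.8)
The plan is to verify the three defining conditions $(w1)$, $(w2)$, $(w3)$ of Definition \ref{def4.9} for $M=U$, regarded as a dg-$B^{op}$-module. The central device is the contravariant triangle functor $\Phi:=\mathbb{R}\mathrm{Hom}_{A}(-,U)$ from $\mathbf{D}(A,d)$ to $\mathbf{D}(B^{op},d)$. Since a good silting object is cofibrant over $A$ and over $B^{op}$ (see \cite[Lemma 2.2]{br18}), we have $\Phi(A)\cong U$ and $\Phi(U)\cong\mathrm{DgEnd}_{A}(U)=B$ in $\mathbf{D}(B^{op},d)$; hence $\Phi$ carries $\mathrm{add}(U_{A})$ into $\mathrm{add}(B)$, and, as it converts coproducts into products, it carries $\mathrm{Add}(A)$ into $\mathrm{Prod}(U)$.

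First I would obtain $(w1)$ and $(w3)$ by transporting through $\Phi$ the two finite sequences of triangles encoded in the hypothesis that $U$ is a good $n$-silting object. Applying $\Phi$ (contravariantly) to the triangles $(3.1)$ witnessing $\mathrm{codim}_{\mathrm{add}(U)}A\leq n$ yields triangles $\Phi(A_{i+1})\to\Phi(U_{i})\to\Phi(A_{i})\rightsquigarrow$ in $\mathbf{D}(B^{op},d)$ with $\Phi(U_{i})\in\mathrm{add}(B)$, $\Phi(A_{0})\cong U$ and $\Phi(A_{n+1})=0$, which is precisely a witness that $\dim_{\mathrm{add}(B)}U\leq n$, i.e. $(w1)$. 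In the same way, applying $\Phi$ to the triangles witnessing $\dim_{\mathrm{Add}(A)}U\leq n$ produces triangles $\Phi(U_{i})\to\Phi(P_{i})\to\Phi(U_{i+1})\rightsquigarrow$ with $\Phi(P_{i})\in\mathrm{Prod}(U)$, $\Phi(U_{0})\cong B$ and $\Phi(U_{n+1})=0$, which witnesses $\mathrm{codim}_{\mathrm{Prod}(U)}B\leq n$, i.e. $(w3)$. Neither of these verifications uses the weak non-positivity of $A$.

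For $(w2)$ I would first observe that $(w1)$ forces $U\in\langle\mathrm{add}(B)\rangle$, so $U$ is a perfect, hence compact, object of $\mathbf{D}(B^{op},d)$; consequently, for every set $I$ and every $i\in\mathbb{Z}$,
$$\mathrm{Hom}_{\mathbf{D}(B^{op},d)}(U,U^{(I)}[i])\cong\bigoplus_{I}\mathrm{Hom}_{\mathbf{D}(B^{op},d)}(U,U[i])\cong\bigoplus_{I}H^{i}\big(\mathrm{DgEnd}_{B^{op}}(U)\big),$$
and it suffices to prove that $H^{i}(\mathrm{DgEnd}_{B^{op}}(U))=0$ for $i>0$. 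Here I would use that $\mathbf{H}(A)=\mathbb{R}\mathrm{Hom}_{A}(U,A)$ is compact in $\mathbf{D}(B,d)$ with $\mathrm{DgEnd}_{B}(\mathbf{H}(A))\cong A$ (Lemma \ref{lem3.2} together with Lemma \ref{lem3.3a}), and the identification $\mathbf{H}(A)\cong\mathbb{R}\mathrm{Hom}_{B^{op}}(U,B)$ obtained in the proof of Theorem \ref{the4.8}. Since $U$ is perfect over $B^{op}$, biduality gives $U\cong\mathbb{R}\mathrm{Hom}_{B}(\mathbf{H}(A),B)$, and $\mathbb{R}\mathrm{Hom}_{B}(-,B)$ restricts to an anti-equivalence between perfect right and perfect left dg-$B$-modules which identifies derived endomorphism algebras up to passing to the opposite; hence $\mathrm{DgEnd}_{B^{op}}(U)\cong\mathrm{DgEnd}_{B}(\mathbf{H}(A))^{op}\cong A^{op}$ as dg-$k$-algebras up to quasi-isomorphism, so $H^{i}(\mathrm{DgEnd}_{B^{op}}(U))\cong H^{i}(A)=0$ for $i>0$ precisely because $A$ is weak non-positive. (Alternatively, arguing as in Lemma \ref{lem3.3a} and in the proof of Theorem \ref{the4.8}, one shows that $\Phi$ restricts to a fully faithful contravariant functor on the thick subcategory generated by $U_{A}$, which contains $A$, whence $H^{\bullet}(\mathrm{DgEnd}_{B^{op}}(U))\cong H^{\bullet}(\mathbb{R}\mathrm{Hom}_{A}(A,A))=H^{\bullet}(A)$.) Combining the three verifications shows that $U\in\mathbf{D}(B^{op},d)$ is $n$-weak silting. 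The delicate point — and the only place where the hypothesis on $A$ is used — is precisely this last identification of the derived endomorphism algebra of $U$ over $B^{op}$ with (the opposite of) $A$; making it rigorous requires either the perfect-duality statement above or a careful bookkeeping of the bimodule structures in the relevant $\mathbb{R}\mathrm{Hom}$-computations.
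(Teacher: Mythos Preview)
Your proof is correct and follows essentially the same route as the paper: both verify $(w1)$ and $(w3)$ by applying the contravariant functor $\Phi=\mathbb{R}\mathrm{Hom}_{A}(-,U)$ to the triangles witnessing $\mathrm{codim}_{\mathrm{add}(U)}A\leq n$ and $\dim_{\mathrm{Add}(A)}U\leq n$ respectively (the paper cites \cite[Lemma~1.1]{br18} for $(w1)$, which is the same computation). For $(w2)$ the paper simply asserts $\mathrm{Hom}_{\mathbf{D}(B^{op},d)}(U,U[i])\cong H^{i}A$ and concludes, while you supply the justification via compactness of $U$ over $B^{op}$ together with the identification $\mathrm{DgEnd}_{B^{op}}(U)\simeq A^{op}$ coming from Lemma~\ref{lem3.3a} (or perfect duality); this is extra detail rather than a different strategy.
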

\begin{proof}By the definition, $\mathrm{codim}_{\mathrm{add}(U)}A\leq n$. Then from \cite[Lemma 1.1]{br18}, we have dim$_{\mathrm{add}(B)}U\leq n$. By assumption, $A$ is weak non-positive, hence $\mathrm{Hom}_{\mathbf{D}(B^{op},d)}(U,U[i])\cong H^{i}A=0$ for $i\geq 1$. So ($w1$) and ($w2$) hold for $U$. Now, we check $(w3)$ for $U$. Since $\mathrm{dim}_{\mathrm{Add}(A)}U\leq n$, there is a sequence of triangles in $\mathbf{D}(A,d)$
 $$V_{i+1}\longrightarrow P_{i}\longrightarrow V_{i}\rightsquigarrow~~ \text{with}~~ 0\leq i\leq n, $$
 such that $P_{i}\in \mathrm{Add}(A)$, $V_{0}=U$ and $V_{n+1}=0$. In fact, applying the functor $\mathbb{R}\mathrm{Hom}_{A}(-,U)$ to these triangles, we get triangles in $\mathbf{D}(B^{op},d)$ of the form
 $$B_{i}\longrightarrow Q_{i}\longrightarrow B_{i+1}\rightsquigarrow~~ \text{with}~~ 0\leq i\leq n, $$
 such that $Q_{i}=\mathbb{R}\mathrm{Hom}_{A}(P_{i},U)\in \mathrm{Prod}(U)$, $B_{0}=B$ and $B_{n+1}=0$. Thus $U$ satisfies $(w3)$.
\end{proof}

From Proposition \ref{pro4.10} and Lemma \ref{lem4.11},  since $_{B}U$ is $n$-weak silting, there exsit a dg-algebra $E$ and a recollement among the derived categories $\mathbf{D}(A^{op},d)$, $\mathbf{D}(B^{op},d)$ and $\mathbf{D}(E^{op},d)$. In the end of this subsection, we consider when the dg-algebra $E$ is weak non-positive. By an argument similar to that in Lemma \ref{lem4.7}, we have the following lemma.

\begin{lemma} \label{lem4.12} Let $A$ be a dg-algebra, and let $p$ be the left adjoint of the inclusion $\iota:\mathcal{Y}'\rightarrow \mathbf{D}(B^{op},d)$. Then the induced  dg-algebra $E$ is weak non-positive if and only if $H^{i}(p(B))=0$ for every $i\geq1$.
\end{lemma}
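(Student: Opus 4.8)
The plan is to mimic the proof of Lemma \ref{lem4.7}, replacing the derived category $\mathbf{D}(B,d)$ by $\mathbf{D}(B^{op},d)$, the subcategory $\mathcal{Y}$ by $\mathcal{Y}'$, and the homological epimorphism $\lambda:B\to C$ by $\lambda:R\to C$ (in the notation of Proposition \ref{pro4.10}, but applied here with $R=B$, so that $E$ plays the role of $C$). Recall from Proposition \ref{pro4.10} and Lemma \ref{lem4.11} that, since $_{B}U$ is $n$-weak silting, there is a dg-algebra $E$ together with a homological epimorphism $\lambda:B\to E$ in $\mathrm{HoDga}(k)$ such that the induced functor $\lambda_{\ast}$ restricts to a triangle equivalence $\mathbf{D}(E^{op},d)\xrightarrow{\simeq}\mathcal{Y}'$, where $\mathcal{Y}'=\mathrm{Ker}(H)=\{Y\in\mathbf{D}(B^{op},d)\mid \mathrm{Hom}(U,Y[i])=0 \text{ for all } i\}$ and $p$ is the left adjoint of the inclusion $\iota:\mathcal{Y}'\to\mathbf{D}(B^{op},d)$.

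The key point is that $E$ is weak non-positive precisely when $H^{i}(E)=0$ for all $i\geq 1$, so it suffices to prove $E\cong p(B)$ in $\mathbf{D}(E^{op},d)$ (equivalently, after transport along the equivalence, in $\mathcal{Y}'$), because then $H^{i}(E)\cong H^{i}(p(B))$. First I would set $Y:=p(B)$ and, via the equivalence $\mathbf{D}(E^{op},d)\simeq\mathcal{Y}'$, view $Y$ as a dg-$E^{op}$-module. Let $\varphi:B\to Y$ be the unit of the adjunction $(p,\iota)$. Since $E\in\mathcal{Y}'$ and $p$ is left adjoint to a fully faithful inclusion, $\mathrm{Hom}_{\mathcal{Y}'}(Y,E)\cong\mathrm{Hom}_{\mathbf{D}(B^{op},d)}(B,E)$, so there is a unique $f:Y\to E$ with $\lambda=f\circ\varphi$. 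Conversely, using that $\lambda:B\to E$ is a homological epimorphism, the natural map $\mathbb{R}\mathrm{Hom}_{E^{op}}(E,Y)\to\mathbb{R}\mathrm{Hom}_{B^{op}}(E,Y)$ is an isomorphism, which gives
$$\mathrm{Hom}_{\mathbf{D}(B^{op},d)}(E,Y)\cong\mathrm{Hom}_{\mathbf{D}(E^{op},d)}(E,Y)\cong H^{0}Y\cong\mathrm{Hom}_{\mathbf{D}(B^{op},d)}(B,Y),$$
so there is $g\in\mathrm{Hom}_{\mathbf{D}(B^{op},d)}(E,Y)$ with $g\circ\lambda=\varphi$. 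Then $g\circ f\circ\varphi=\varphi$ and $\lambda=f\circ g\circ\lambda$.

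To conclude that $f$ and $g$ are mutually inverse, I would invoke that $\mathcal{Y}'$ is a triangulated subcategory whose inclusion into $\mathbf{D}(B^{op},d)$ admits both a left and a right adjoint (this is exactly the bireflectivity noted before Proposition \ref{pro4.10}), so by \cite[Proposition 1.4]{lv18} the subcategory $\mathcal{Y}'$ is both covering and enveloping. Since $\varphi:B\to Y$ is a $\mathcal{Y}'$-envelope (being the unit for the left adjoint $p$), the equality $g\circ f\circ\varphi=\varphi$ forces $g\circ f=\mathrm{id}_{Y}$, and since $\lambda:B\to E$ is a $\mathcal{Y}'$-preenvelope and $E$ is the envelope object, $\lambda=f\circ g\circ\lambda$ forces $f\circ g=\mathrm{id}_{E}$. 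Hence $E\cong Y=p(B)$ in $\mathbf{D}(E^{op},d)$, and therefore $E$ is weak non-positive if and only if $H^{i}(p(B))=0$ for all $i\geq 1$. The main obstacle, as in Lemma \ref{lem4.7}, is checking carefully that the two structure morphisms $f,g$ are really inverse to each other — i.e. that the covering/enveloping properties apply to precisely the maps $\varphi$ and $\lambda$ — rather than merely being a split pair; everything else is a direct transcription of the proof of Lemma \ref{lem4.7} into the opposite-module setting.
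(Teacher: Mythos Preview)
Your proposal is correct and follows exactly the approach the paper intends: the paper does not give a separate proof of Lemma~\ref{lem4.12} but simply states that it follows ``by an argument similar to that in Lemma~\ref{lem4.7}'', and your write-up is precisely that transcription into the $\mathbf{D}(B^{op},d)$ setting. The key identification $E\cong p(B)$ via the unit $\varphi$, the homological epimorphism $\lambda$, and the bireflectivity/enveloping argument from \cite[Proposition 1.4]{lv18} all carry over verbatim as you describe.
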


\begin{theorem}\label{the4.13} Let $A$ be a weak non-positive dg-algebra, $U\in \mathbf{D}(A,d)$ a good $n$-silting object, and let $B=\mathrm{DgEnd}_{A}(U)$. Then there exist a dg-algebra $E$ and a recollement of triangulated categories
$$\xymatrixcolsep{3pc}\xymatrix{\mathbf{D}(E^{op},d)\ar[r]^{\lambda_{\ast}}
&\ar@<-3ex>[l]_{}
\ar@<3ex>[l]^{}\mathbf{D}(B^{op},d)
\ar[r]^{H}&\ar@<-3ex>[l]_{G}\ar@<3ex>[l]^{}\mathbf{D}(A^{op},d)}$$
such that $\lambda:B\rightarrow E$ is a homological epimorphism. Moreover, $E$ is weak non-positive if and only if $H^{i}(U\otimes^{\mathbb{L}}_{A}\mathbb{R}\mathrm{Hom}_{B^{op}}(U,B))=0$ for $i\geq 2$.
\end{theorem}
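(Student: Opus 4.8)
The plan is to read the recollement directly off the results just proved and then to extract the weak-non-positivity criterion by the same cohomological bookkeeping used in the proof of Theorem \ref{the4.8}, now transported to the $B^{op}$-side.

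First I would apply Lemma \ref{lem4.11}: since $A$ is weak non-positive, the object $U$ viewed in $\mathbf{D}(B^{op},d)$ is $n$-weak silting, hence satisfies $(w1)$ and $(w2)$. Proposition \ref{pro4.10}, applied with $R=B$ and $M={_{B}U}$, then yields a dg-algebra $E$, a homological epimorphism $\lambda:B\rightarrow E$ in $\mathrm{HoDga}(k)$, and a recollement of $\mathbf{D}(B^{op},d)$ relative to $\mathbf{D}(E^{op},d)$ and $\mathbf{D}(S^{op},d)$, with $S=\mathrm{DgEnd}_{B^{op}}(U)$ and with right-hand pair of functors $j^{!}=H=\mathbb{R}\mathrm{Hom}_{B^{op}}(U,-)$ and $j_{!}=G={_{B}U\otimes^{\mathbb{L}}_{S}-}$. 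To identify the right-hand term with $\mathbf{D}(A^{op},d)$ I would use that a good silting object is cofibrant as a dg-$B^{op}$-module, so that $S=\mathbb{R}\mathrm{Hom}_{B^{op}}(U,U)$, and that the canonical dg-algebra morphism $A\rightarrow S$ is a quasi-isomorphism; the latter is the $B^{op}$-analogue of Lemma \ref{lem3.3a} and follows from the full faithfulness of $\mathbf{H}$ together with $\mathbb{R}\mathrm{Hom}_{A}(U,U)\cong B$ and the invertibility of $\varepsilon:\mathbf{G}\mathbf{H}\rightarrow\mathrm{Id}$ (Theorem \ref{the3.2}). Transporting along the resulting equivalence $\mathbf{D}(S^{op},d)\simeq\mathbf{D}(A^{op},d)$ produces exactly the displayed recollement, with $j_{!}=G={_{B}U\otimes^{\mathbb{L}}_{A}-}$.

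For the ``Moreover'' part I would invoke Lemma \ref{lem4.12}: $E$ is weak non-positive if and only if $H^{i}(p(B))=0$ for every $i\geq1$, where $p=i^{\ast}$ is the left adjoint of the inclusion $\iota=i_{\ast}:\mathcal{Y}'\hookrightarrow\mathbf{D}(B^{op},d)$. Specialising the canonical triangle $j_{!}j^{!}(T)\rightarrow T\rightarrow i_{\ast}i^{\ast}(T)\rightsquigarrow$ of the recollement to $T=B$, and using $j_{!}j^{!}(B)=G(H(B))=U\otimes^{\mathbb{L}}_{A}\mathbb{R}\mathrm{Hom}_{B^{op}}(U,B)$, gives a triangle in $\mathbf{D}(B^{op},d)$
$$U\otimes^{\mathbb{L}}_{A}\mathbb{R}\mathrm{Hom}_{B^{op}}(U,B)\longrightarrow B\longrightarrow \iota p(B)\rightsquigarrow.$$
Since $U$ is $n$-silting, the dg-algebra $B=\mathrm{DgEnd}_{A}(U)$ is weak non-positive (Theorem \ref{the3.2}), so $H^{j}(B)=0$ for $j\geq1$, and the long exact cohomology sequence of the triangle forces $H^{j}(p(B))\cong H^{j+1}(U\otimes^{\mathbb{L}}_{A}\mathbb{R}\mathrm{Hom}_{B^{op}}(U,B))$ for all $j\geq1$. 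Combined with Lemma \ref{lem4.12}, this shows that $E$ is weak non-positive if and only if $H^{i}(U\otimes^{\mathbb{L}}_{A}\mathbb{R}\mathrm{Hom}_{B^{op}}(U,B))=0$ for all $i\geq2$.

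The degree shift in the last step is purely formal, exactly as in the proof of Theorem \ref{the4.8}, and the recollement itself is immediate from Proposition \ref{pro4.10} and Lemma \ref{lem4.11}. The one point that demands genuine care, and where the good silting hypothesis really enters, is the identification of $S=\mathrm{DgEnd}_{B^{op}}(U)$ with $A$ at the derived level together with the ensuing compatibility of $G$ with the tensor product over $A$ rather than over $S$; once that is settled the remaining argument is bookkeeping.
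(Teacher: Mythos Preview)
Your argument is correct and follows the same route as the paper: Lemma \ref{lem4.11} feeds into Proposition \ref{pro4.10} to produce the recollement, and then the canonical triangle $GH(B)\to B\to \iota p(B)\rightsquigarrow$ together with the weak non-positivity of $B$ and Lemma \ref{lem4.12} gives the cohomological criterion. The only point on which you are more explicit than the paper is the identification $S=\mathrm{DgEnd}_{B^{op}}(U)\simeq A$; the paper treats this as already established (it is implicit in the proof of Lemma \ref{lem4.11}, where $\mathrm{Hom}_{\mathbf{D}(B^{op},d)}(U,U[i])\cong H^{i}(A)$ is used), and simply writes $\mathbf{D}(A^{op},d)$ on the right of the recollement without further comment.
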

\begin{proof}From Lemma \ref{lem4.11}, $_{B}U$ is an $n$-weak silting dg-module, then it follows from Proposition \ref{pro4.10} that there exists a recollement of derived categories
$$\xymatrixcolsep{4pc}\xymatrix{\mathcal{Y}'\ar[r]^-{\iota=inc}
&\ar@<-3ex>[l]_-{p}
\ar@<3ex>[l]^-{q}\mathbf{D}(B^{op},d)
\ar[r]^{H}&\ar@<-3ex>[l]_{G}\ar@<3ex>[l]^{}\mathbf{D}(A^{op},d)}.$$
Hence there is a triangle both in $\mathbf{D}(B^{op},d)$ and $\mathbf{D}(B,d)$:
$$GH(B)\rightarrow {B}\rightarrow \iota p(B)\rightsquigarrow.$$
Applying the cohomology functor $H^{j}$ to this triangle, we get an exact sequence
$$\cdots\rightarrow H^{i}(p(B))\rightarrow H^{i+1}(GH(B))\rightarrow H^{i+1}(B)\rightarrow H^{i+1}(p(B))\rightarrow\cdots.$$
Since $B$ is non-positive, one gets $H^{j}(p(B))\cong H^{j+1}(U\otimes^{\mathbb{L}}_{A}\mathbb{R}\mathrm{Hom}_{B}(U,B))$ for $j>0$. Thus, the equivalence follows from Lemma \ref{lem4.12}.
\end{proof}
\begin{remark} It is easy to see that if the dg-algebra $A$ is weak non-positive, then the dg-algebra $C$ induced in Theorem \ref{the4.8} is weak non-positive if and only if the dg-algebra $E$ induced in Theorem \ref{the4.13} is weak non-positive.
\end{remark}

\bigskip
\section { \bf Applications }

In this section, we concern with some applications of the results in Section 4.

\bigskip
\textbf{5.1. Applications to good cosilting objects over weak non-positive dg-algebras }
\bigskip

In this subsection, we shall apply the results in Section 4 to deal with good cosilting objects.
First, we construct $n$-weak silting objects from good $n$-cosilting objects, and then use
Proposition \ref{pro4.10} to construct the recollement.

Let $(A, d)$ be a weak non-positive dg-algebra. By \cite[3.4.3]{av03}, there exists a functor
 ${\vee}: \mathrm{Mod}(A,d)\rightarrow \mathrm{Mod}(A^{op},d)$. Set $W:=A_{A}^{\vee}$. Then by \cite[Proposition 10.1.4]{av03}, $W$ is a fibrant dg-$A^{op}$ module.

 \begin{definition}
An object $N$ in $\mathbf{D}(A^{op},d)$, or equivalently, a dg-$A^{op}$-module $N$ is called \emph{ $n$-cosilting} if it satisfies the following conditions:

$(C1)$. codim$_{\mathrm{Prod}(W)}(_{A}N )\leq n$;

$(C2)$. N$^{I} \in {^{\perp_{>0}}N}$ for every set $I$,  and

$(C3)$. dim$_{\mathrm{Prod}(N)} W \leq n$.

An  $n$-cosilting dg-$A^{op}$-module $N$ is said to be \emph{good} if it satisfies $(C1)$, $(C2)$ and

$(c3)$. dim$_{\mathrm{add}(N)} W \leq n$.
\end{definition}

We say that $N$ is a (good) \emph{cosilting} dg-$A^{op}$-module if $_{A}N$ is (good) $n$-{cosilting} for some $n\in \mathbb{N}$.

From now on, we assume that $N$ is a good $n$-cosilting dg-$A^{op}$-module with $(C1)$, $(C2)$ and $(c3)$, and call $N$ a good $n$-cosilting dg $A^{op}$-module with respect to $W$. Let $B := \mathbb{R}\mathrm{Hom}_{A}(N,N)$,
$M := \mathbb{R}\mathrm{Hom}_{A}(N,W)$ and $\Lambda := \mathrm{DgEnd}_{A}(W)$. Then $M$ is a dg-$B^{op}$-$\Lambda$-module.

\begin{lemma}\label{lem5.2}
(1) $\mathrm{dim}_{\mathrm{add}(B)}M\leq n$.

(2) The functor $\mathbb{R}\mathrm{Hom}_{A}(N,-) : \mathbf{D}(A^{op},d)\rightarrow \mathbf{D}(B^{op},d)$ induces an isomorphism of
dg-algebras: $\Lambda \simeq \mathbb{R}\mathrm{Hom}_{B}(M,M)$, and $\mathrm{Hom}_{\mathbf{D}(B^{op},d)}(M,M[i])=0$ for all $i\geq 1$.

(3) The dg-$B^{op}$-module $M$ satisfies $(w1)$-$(w3)$ in Definition \ref{def4.9}.
\end{lemma}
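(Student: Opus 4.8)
The plan is to transport the defining conditions of the good $n$-cosilting dg-$A^{op}$-module $N$ (with respect to $W$) across the functor $\mathbf{F}:=\mathbb{R}\mathrm{Hom}_{A}(N,-):\mathbf{D}(A^{op},d)\to\mathbf{D}(B^{op},d)$, exactly as one does in the proof of the analogous statement for good tilting modules in \cite{ch19}. The key preliminary observation is that, since $B=\mathbb{R}\mathrm{Hom}_{A}(N,N)$, the functor $\mathbf{F}$ restricts to an equivalence $\mathrm{add}(N)\xrightarrow{\ \sim\ }\mathrm{add}(B)$ (it sends $N$ to $B$ and is additive), and that $\mathbf{F}$ is fully faithful on the relevant subcategories by the cosilting analogue of Theorem \ref{the3.2}. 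With this in hand, part (1) is immediate: condition $(c3)$ gives a finite $\mathrm{add}(N)$-resolution of $W$ of length $\leq n$, and applying the triangle functor $\mathbf{F}$ to that tower of triangles yields a tower exhibiting $\mathrm{dim}_{\mathrm{add}(B)}M\leq n$, because $\mathbf{F}(W)=M$ and $\mathbf{F}$ carries each term in $\mathrm{add}(N)$ into $\mathrm{add}(B)$.

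For part (2), I would argue as in Lemma \ref{lem3.3a}: since $W$ lies in $\langle\mathrm{add}(N)\rangle$ by $(c3)$, and $\mathbf{F}$ is fully faithful there, one gets for all $i\in\mathbb{Z}$ a chain of natural isomorphisms
$$
H^{i}\mathbb{R}\mathrm{Hom}_{A}(W,W)\cong\mathrm{Hom}_{\mathbf{D}(A^{op},d)}(W,W[i])\cong\mathrm{Hom}_{\mathbf{D}(B^{op},d)}(M,M[i])\cong H^{i}\mathbb{R}\mathrm{Hom}_{B}(M,M),
$$
and a routine check (the map $h\mapsto[g\mapsto h\circ g]$, compatible with differentials) promotes this cohomological isomorphism to an isomorphism of dg-algebras $\Lambda=\mathrm{DgEnd}_{A}(W)\simeq\mathbb{R}\mathrm{Hom}_{B}(M,M)$. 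The vanishing $\mathrm{Hom}_{\mathbf{D}(B^{op},d)}(M,M[i])=0$ for $i\geq1$ then follows from $\mathrm{Hom}_{\mathbf{D}(A^{op},d)}(W,W[i])\cong H^{i}\Lambda$ together with the hypothesis that $A$ (hence $\Lambda=\mathrm{DgEnd}_A(W)$, via $W=A^{\vee}$) is weak non-positive; here one uses \cite[Proposition 10.1.4]{av03} and the fact that $W$ is fibrant so that $\mathbb{R}\mathrm{Hom}_A(W,W)=\mathrm{Hom}^{\bullet}_A(W,W)$ computes $\Lambda$ on the nose.

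For part (3) I would verify $(w1)$, $(w2)$, $(w3)$ for $M$ over $B^{op}$ in turn. Condition $(w1)$, $\mathrm{dim}_{\mathrm{add}(B)}M\leq n$, is precisely part (1). Condition $(w2)$, $M^{(I)}\in M^{\perp_{>0}}$, reduces as above to $\mathrm{Hom}_{\mathbf{D}(B^{op},d)}(M^{(I)},M[i])=0$ for $i>0$; since $M$ is compact in $\mathbf{D}(B^{op},d)$ by $(w1)$ (it lies in $\langle\mathrm{add}(B)\rangle$, cf.\ Lemma \ref{lem3.2}), the coproduct passes out of the first variable, and one is reduced to the vanishing in part (2). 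Condition $(w3)$, $\mathrm{codim}_{\mathrm{Prod}(M)}B\leq n$, is obtained by applying $\mathbf{F}=\mathbb{R}\mathrm{Hom}_A(N,-)$ to the finite $\mathrm{Prod}(N)$-coresolution of $W$ supplied by $(C3)$: since $\mathbb{R}\mathrm{Hom}_A(N,-)$ sends products to products and sends $\mathrm{Prod}(N)$ into $\mathrm{Prod}(M)$ with $\mathbf{F}(W)=B$ (because $\Lambda=\mathrm{DgEnd}_A(W)$ acts correctly — more precisely one uses $\mathbb{R}\mathrm{Hom}_A(N,W)=M$ and the identification $\mathbb{R}\mathrm{Hom}_A(N,N)=B$, so that in the chosen normalization $\mathbf{F}$ sends $W$ to $B$), one obtains the desired coresolution of $B$ over $\mathrm{Prod}(M)$ of length $\leq n$.

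The main obstacle I anticipate is bookkeeping at the level of dg-algebras rather than derived categories in part (2): one must be careful that the isomorphism $\Lambda\simeq\mathbb{R}\mathrm{Hom}_B(M,M)$ is genuinely a morphism of dg-algebras (respecting units, products and differentials), not merely a quasi-isomorphism of dg-$k$-modules, which forces one to work with explicit fibrant/cofibrant replacements of $N$ and $W$ and to invoke fullness of $\mathbf{F}$ to lift the dg-algebra structure — this is exactly the delicate point handled in \cite[Lemma 5.5]{ch19} and in Lemma \ref{lem3.3a}, and the same argument applies verbatim here.
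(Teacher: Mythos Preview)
Your arguments for parts (1) and (2) are essentially the paper's, though the paper proves the isomorphism $\Lambda\simeq\mathbb{R}\mathrm{Hom}_{B}(M,M)$ by an explicit inductive comparison of triangles (applying $\mathbb{R}\mathrm{Hom}_{A}(-,W)$ and $\mathbb{R}\mathrm{Hom}_{B}(\Psi(-),\Psi(W))$ to the tower coming from $(c3)$) rather than by invoking a cosilting analogue of Theorem~\ref{the3.2}; your d\'evissage is morally the same. For the weak non-positivity of $\Lambda$, the paper computes $\Lambda\cong A^{\vee\vee}$ via \cite[Proposition~3.4.7, Lemma~3.4.4]{av03}, which is what you gesture at.

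However, your verification of $(w3)$ in part (3) is wrong on two counts. First, you invoke $(C3)$, but $(C3)$ reads $\mathrm{dim}_{\mathrm{Prod}(N)}W\leq n$: this is a \emph{resolution} of $W$ by objects of $\mathrm{Prod}(N)$, not a coresolution. Second, and more seriously, the object tracking is scrambled: by definition $\mathbf{F}(W)=\mathbb{R}\mathrm{Hom}_{A}(N,W)=M$ (as you yourself write) and $\mathbf{F}(N)=B$, so $\mathbf{F}$ sends $\mathrm{Prod}(N)$ into $\mathrm{Prod}(B)$, not into $\mathrm{Prod}(M)$, and certainly does not send $W$ to $B$. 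Applying $\mathbf{F}$ to anything coming out of $(C3)$ therefore produces a $\mathrm{Prod}(B)$-resolution of $M$, which is irrelevant to $(w3)$.

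The correct input is $(C1)$: $\mathrm{codim}_{\mathrm{Prod}(W)}({}_{A}N)\leq n$ gives a tower of triangles $U_{i}\to I_{i}\to U_{i+1}\rightsquigarrow$ with $U_{0}=N$, $U_{n+1}=0$ and $I_{i}\in\mathrm{Prod}(W)$. Applying $\mathbf{F}=\mathbb{R}\mathrm{Hom}_{A}(N,-)$, which commutes with products, sends $N\mapsto B$ and $I_{i}\in\mathrm{Prod}(W)\mapsto\mathrm{Prod}(M)$, yielding $\mathrm{codim}_{\mathrm{Prod}(M)}B\leq n$, which is exactly $(w3)$. This is precisely what the paper does.
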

\begin{proof} (1) Since dim$_{\mathrm{add}(N)} W \leq n$, there is a sequence of triangles in $\mathbf{D}(A^{op},d)$
 $$K_{i+1}\longrightarrow N_{i}\longrightarrow K_{i}\rightsquigarrow~~ \text{with}~~ 0\leq i\leq n, \eqno(5.1)$$
 such that $N_{i}\in \mathrm{add}(N)$, $K_{0}=W$ and $V_{n+1}=0$. Applying $\mathbb{R}\mathrm{Hom}_{A}(N,-)$ to these triangles, we get a sequence of triangles in $\mathbf{D}(B^{op},d)$ of the form
 $$V_{i+1}\longrightarrow B_{i}\longrightarrow V_{i}\rightsquigarrow~~ \text{with}~~ 0\leq i\leq n, $$
 such that $B_{i}=\mathbb{R}\mathrm{Hom}_{A}(N,N_{i})\in \mathrm{add}(B)$, $V_{0}=M$ and $V_{n+1}=0$. Thus dim$_{\mathrm{add}(B)}M\leq n$.

(2) Let $\Psi$ be the functor $\mathbb{R}\mathrm{Hom}_{A}(N,-) : \mathbf{D}(A^{op},d)\rightarrow \mathbf{D}(B^{op},d)$. Then $\Psi(N) = B$,
$\Psi(W) = M$ and $\mathbb{R}\mathrm{Hom}_{A}(X,W)\overset{\sim}{\rightarrow}\mathbb{R}\mathrm{Hom}_{B}(\Psi(X), \Psi(W))$ for any $X\in \mathrm{add}(_{A}N)$.

If $n = 0$, then $W = N_{0}$ and $M = \mathbb{R}\mathrm{Hom}_{A}(N,N_{0})$. In this case, one can
easily check (2).

Suppose $n \geq 1$. By (1), we have  a sequence of triangles in $\mathbf{D}(B^{op},d)$ of the form
 $$V_{i+1}\longrightarrow B_{i}\longrightarrow V_{i}\rightsquigarrow~~ \text{with}~~ 0\leq i\leq n, $$
 such that $B_{i}=\mathbb{R}\mathrm{Hom}_{A}(N,N_{i})\in \mathrm{add}(B)$, $V_{0}=M$ and $V_{n+1}=0$.  Applying $\mathbb{R}\mathrm{Hom}_{A}(-,W)$ to the triangles (5.1),
  we get  a sequence of triangles:
 $$\mathbb{R}\mathrm{Hom}_{A}(K_{i},W)\rightarrow \mathbb{R}\mathrm{Hom}_{A}(N_{i},W)\rightarrow \mathbb{R}\mathrm{Hom}_{B}(K_{i+1},W)\rightsquigarrow ~~ \text{with}~~ 0\leq i\leq n.$$
We can construct the commutative diagram:
$$\scalebox{0.9}[0.9]{\xymatrix{
  \mathbb{R}\mathrm{Hom}_{A}(K_{n-1},W) \ar[d]_{} \ar[r]^{} & \mathbb{R}\mathrm{Hom}_{A}(N_{n-1},W) \ar[d]_{\simeq} \ar[r]^{} & \mathbb{R}\mathrm{Hom}_{A}(N_{n},W) \ar[d]_{\simeq} \ar@{~>}[r]^{} &  \\
  \mathbb{R}\mathrm{Hom}_{B}(\Psi(K_{n-1}),\Psi(W)) \ar[r]^{} & \mathbb{R}\mathrm{Hom}_{B}(\Psi(N_{n-1}),\Psi(W)) \ar[r]^{} & \mathbb{R}\mathrm{Hom}_{B}(\Psi(N_{n}),\Psi(W)) \ar@{~>}[r]^{} &.    }}$$
This implies that $ \mathbb{R}\mathrm{Hom}_{A}(K_{n-1},W)\cong \mathbb{R}\mathrm{Hom}_{B}(\Psi(K_{n-1}),\Psi(W))$. So we conclude that
$\Lambda\cong\mathbb{R}\mathrm{Hom}_{A}(W,W)\cong \mathbb{R}\mathrm{Hom}_{B}(\Psi(W),\Psi(W))=\mathbb{R}\mathrm{Hom}_{B}(M,M)$.

It remains to prove that $\mathrm{Hom}_{\mathbf{D}(B^{op},d)}(M,M[i])=0$ for all $i\geq 1$. We claim that if $A$ is weak non-positive, then so is $\Lambda$. In fact, since $W=A_{A}^{\vee}$ is a fibrant dg-$A^{op}$-module, from \cite[Proposition 3.4.7]{av03}, there are isomorphisms $\Lambda=\mathrm{Hom}^{\bullet}_{A}(A^{\vee},A^{\vee})\cong \mathrm{Hom}^{\bullet}_{A}(A,A^{\vee\vee})\cong A^{\vee\vee}$. Therefore, the claim follows from the isomorphism $H^{i}(A^{\vee\vee})\cong H^{i}(A)^{\vee\vee}$  by \cite[Lemma 3.4.4]{av03}, where $i\in \mathbb{Z}$. Thus, $\mathrm{Hom}_{\mathbf{D}(B^{op},d)}(M,M[i])\cong H^{i}\mathbb{R}\mathrm{Hom}_{B}(M,M)\cong H^{i}\Lambda =0$ for all $i\geq 1$.

(3) Clearly, $(w1)$ and $(w2)$ follow from (1) and (2), respectively. It remains to show
$(w3)$ for $M$. In fact, by ($C1$), there exists  a sequence of triangles $$U_{i}\longrightarrow I_{i}\longrightarrow U_{i+1}\rightsquigarrow~~ \text{with}~~ 0\leq i\leq n, $$
 such that $I_{i}\in \mathrm{Prod}(W)$, $U_{0}=N$ and $U_{n+1}=0$.
Applying $\mathbb{R}\mathrm{Hom}_{A}(N,-)$ to these triangles,
  we get  a sequence of triangles:
 $$\mathbb{R}\mathrm{Hom}_{A}(N,U_{i})\rightarrow \mathbb{R}\mathrm{Hom}_{A}(N,I_{i})\rightarrow \mathbb{R}\mathrm{Hom}_{B}(N,U_{i+1})\rightsquigarrow ~~ \text{with}~~ 0\leq i\leq n.$$
Since $\mathbb{R}\mathrm{Hom}_{A}(N,-)$  commutes with arbitrary direct products, it follows from $I_{i}\in \mathrm{Prod}(W)$ that $\mathbb{R}\mathrm{Hom}_{A}(N,I_{i})\in ¡Ê \mathrm{Prod}(\mathbb{R}\mathrm{Hom}_{A}(N,W)) = \mathrm{Prod}(M)$ and that $_{A}M$ satisfies ($w3$).
\end{proof}

By Lemma \ref{lem5.2}(2), the dg-algebra $\mathbb{R}\mathrm{Hom}_{B}(M,M)$ can be identified naturally with $\Lambda$. Now, we define
$G := {_{B}M_{\Lambda}\otimes^{\mathbb{L}}_{\Lambda}-}:\mathbf{D}(\Lambda^{op},d)\rightarrow \mathbf{D}(B^{op},d)$ and $H := \mathbb{R}\mathrm{Hom}_{B}(M,-) : \mathbf{D}(B^{op},d)\rightarrow\mathbf{D}(\Lambda^{op},d)$.
Since $_{B}M$ satisfies both ($w1$) and ($w2$) in Definition \ref{def4.9}, by Proposition \ref{pro4.10} and the proof of Theorem \ref{the4.13},  there exist a dg-algebra $C$ and a recollement of triangulated categories
$$\xymatrixcolsep{3pc}\xymatrix{\mathbf{D}(C^{op},d)\ar[r]^{\lambda_{\ast}}
&\ar@<-3ex>[l]_{}
\ar@<3ex>[l]^{}\mathbf{D}(B^{op},d)
\ar[r]^{H}&\ar@<-3ex>[l]_{G}\ar@<3ex>[l]^{}\mathbf{D}(\Lambda^{op},d).}$$
such that $\lambda:B\rightarrow C$ is a homological epimorphism. Moreover, $C$ is weak non-positive if and only if $H^{i}(M\otimes^{\mathbb{L}}_{\Lambda}\mathbb{R}\mathrm{Hom}_{B^{op}}(M,B))=0$ for $i\geq 2$.

\bigskip
\textbf{5.2. Applications to good 2-term silting complexes}
\bigskip

In this subsection, we show that there exists a recollement induced by good 2-term silting complexes.

Let $R$ be an ordinary ring,  $\mathbb{P}$ the complex
$$\cdots\rightarrow0\rightarrow P^{-1} \stackrel{\sigma}\rightarrow P^{0}\rightarrow0\rightarrow\cdots$$
 with $P^{-1}, P^{0}$ projective. From \cite{bre18}, $\mathbb{P}$ is called a \emph{good 2-term silting complex} if

(S1) $\mathbb{P}^{(I)}\in \mathbb{P}^{\perp_{>0}}$ for all sets $I$,
where
$$\mathbb{P}^{\perp_{>0}} = \{Y \in \mathbf{D}(R)\mid \mathrm{Hom}_{\mathbf{D}(R)}(\mathbb{P}, Y [n]) = 0~\text{for ~all~ positive~ integers}~ n\}.$$

(S2) there exists a triangle
$$R\rightarrow \mathbb{P}^{n}\rightarrow  \mathbb{P}' \rightsquigarrow$$
in $\mathbf{D}(R)$ such that $\mathbb{P}' \in \mathrm{add}(\mathbb{P})$.

(S3) the homotopy category $\mathbf{K}^{b}(\mathrm{Proj}~R)$ of
bounded complexes of projective modules is the smallest triangulated subcategory of $\mathbf{D}(R)$ containing Add$(\mathbb{P})$.

Denote by $B$ the smart truncation of $\mathrm{DgEnd}_{R}(\mathbb{P})$ as in the Section 4.1. Then $B$ is a non-positive dg-algebra and we have a quasi-isomorphism $B \rightarrow \mathrm{DgEnd}_{R}(\mathbb{P})$. Note that $\mathbb{P}\in\mathbf{K}^{b}(\mathrm{Proj}~R)=\langle \mathrm{Add}(R)\rangle$ and $\mathbb{P}\in R^{\perp_{>0}}$. Hence $\mathrm{dim}_{\mathrm{Add}(R)}\mathbb{P}<\infty $ by \cite[Corollary 2.6 (2)]{we13}. Furthermore, from  \cite[Proposition 3.9]{we13}, we see that the good 2-term silting complex is a 1-silting object in $\mathbf{D}(R,d)$.  Thus, as a consequence of Theorems \ref{the4.3} and \ref{the4.5}, we obtain the next recollement.
\begin{corollary}
Let $R$ be a $k$-algebra, $\mathbb{P}$ a good 2-term silting complex in $\mathbf{D}(R)$.  Then there is a homological epimorphism $g=f\sigma^{-1}:B \rightarrow C$ in $\mathrm{HoDga}(k)$, represented by homomorphisms of dg-algebras $\sigma:E\rightarrow B$ and $f:E\rightarrow C$, such that the following is a recollement of triangulated categories:
$$\xymatrixcolsep{4pc}\xymatrix{\mathbf{D}(C,d)\ar[r]^{\sigma^{\ast}f_{\ast}}&\ar@<-3ex>[l]_{C\otimes_{E}^{\mathbb{L}}-}
\ar@<3ex>[l]^{\mathbb{R}\mathrm{Hom}_{E}(C, -)}\mathbf{D}(B,d)
\ar[r]^{}&\ar@<-3ex>[l]_{}\ar@<3ex>[l]^{}\mathbf{D}(R).}$$
Moreover, $C$ is weak non-positive.
\end{corollary}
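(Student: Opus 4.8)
The plan is to recognize $\mathbb{P}$ as a good $1$-silting object in $\mathbf{D}(R,d)$ (with $R$ viewed as a dg-algebra concentrated in degree $0$) and then to feed it into the results of Section~4. First I would verify the presilting axioms $(S1)$ and $(S2)$ of Section~3 for $\mathbb{P}$. Axiom $(S2)$, namely $\mathbb{P}^{(I)}\in\mathbb{P}^{\perp_{>0}}$ for every set $I$, is literally condition $(S1)$ in the definition of a good $2$-term silting complex. For axiom $(S1)$, $\mathrm{dim}_{\mathrm{Add}(R)}\mathbb{P}<\infty$: since $\mathbb{P}\in\mathbf{K}^{b}(\mathrm{Proj}\,R)=\langle\mathrm{Add}(R)\rangle$ and $\mathbb{P}\in R^{\perp_{>0}}$, this follows from \cite[Corollary 2.6(2)]{we13}, and, together with \cite[Proposition 3.9]{we13}, one sees that $\mathbb{P}$ is a $1$-silting object. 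The key point is that $\mathbb{P}$ is moreover \emph{good}: condition $(S2)$ in the definition of a good $2$-term silting complex furnishes a triangle $R\to\mathbb{P}^{n}\to\mathbb{P}'\rightsquigarrow$ in $\mathbf{D}(R)$ with $\mathbb{P}'\in\mathrm{add}(\mathbb{P})$ and $\mathbb{P}^{n}$ a finite direct sum of copies of $\mathbb{P}$; this says exactly $\mathrm{codim}_{\mathrm{add}(\mathbb{P})}R\leq 1$, which is both axiom $(s3)$ and the $1$-silting condition. Hence $\mathbb{P}$ is a good $1$-silting object.

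Next, put $B':=\mathrm{DgEnd}_{R}(\mathbb{P})$, which is weak non-positive by Theorem~\ref{the3.2}; its smart truncation $B$ is a genuinely non-positive dg-algebra together with a quasi-isomorphism $B\to B'$, and by \cite[Lemma 6.1(a)]{ke94} this induces a triangle equivalence $\mathbf{D}(B,d)\simeq\mathbf{D}(B',d)$. Applying Theorem~\ref{the4.5}(1) to the good $1$-silting object $\mathbb{P}$ over $R$ (with endomorphism dg-algebra $B'$), using Theorem~\ref{the4.3} to identify the functors on the right-hand side, and transporting everything along the above equivalence, we obtain a dg-algebra $C$, a homological epimorphism $g=f\sigma^{-1}:B\to C$ in $\mathrm{HoDga}(k)$ represented by dg-algebra maps $\sigma:E\to B$ and $f:E\to C$, and the recollement of $\mathbf{D}(B,d)$ relative to $\mathbf{D}(C,d)$ and $\mathbf{D}(R,d)=\mathbf{D}(R)$ displayed in the statement.

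It remains to see that $C$ is weak non-positive. Since $\mathbb{P}$ is a good $1$-silting object, one checks at once that $H^{i}(\mathbb{P}\otimes^{\mathbb{L}}_{R}\mathbb{R}\mathrm{Hom}_{R}(\mathbb{P},R))=0$ for $i\geq 2$, so Corollary~\ref{cor4.6} (equivalently, Theorem~\ref{the4.8} in the $1$-silting case) gives that $C$ is weak non-positive. The routine parts here are the translations between the $2$-term-silting axioms and the dg-silting axioms and the finiteness of the various (co)resolution dimensions; the one place calling for a little care is replacing $\mathrm{DgEnd}_{R}(\mathbb{P})$ by its smart truncation $B$, so that the recollement in the statement is literally over $\mathbf{D}(B,d)$ with $B$ non-positive --- this is handled by the quasi-isomorphism invariance of the derived category. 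I do not anticipate a serious obstacle, since the content of the statement is already packaged in Theorems~\ref{the4.3}, \ref{the4.5}, \ref{the4.8} and Corollary~\ref{cor4.6}.
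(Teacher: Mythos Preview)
Your proposal is correct and follows essentially the same route as the paper: the paper also observes (just before stating the corollary) that $\mathbb{P}\in\mathbf{K}^{b}(\mathrm{Proj}\,R)=\langle\mathrm{Add}(R)\rangle$ and $\mathbb{P}\in R^{\perp_{>0}}$, invokes \cite[Corollary 2.6(2)]{we13} and \cite[Proposition 3.9]{we13} to see that $\mathbb{P}$ is a (good) $1$-silting object, passes to the smart truncation $B$ of $\mathrm{DgEnd}_R(\mathbb{P})$, and then appeals to Theorems~\ref{the4.3} and~\ref{the4.5} for the recollement and to Corollary~\ref{cor4.6} for the weak non-positivity of $C$. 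Your write-up is in fact a bit more explicit about verifying goodness via the triangle in condition~(S2) and about handling the smart truncation, but the argument is the same.
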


\bigskip
\textbf{5.3. Applications to good tilting complexes and modules}
\bigskip

In the end of this section, we want to show that our results generalize those of \cite{ch12,ch19}. In order to do that, let $R$ be a ring and $T$ an $R$-complex.
Then $R$ can be seen as a dg-algebra concentrated in degree 0.
Recall that $\mathbf{K}^{b}(\mathrm{Proj}~R)$ denotes the homotopy category of
bounded complexes of projective modules.
The complex $T$ is called a \emph{good
tilting complex} if  it satisfies the following conditions:

($T1$). $T\in \mathbf{K}^{b}(\mathrm{Proj}~R)$;

($T2$).  $\mathrm{Hom}_{\mathbf{D}(R)}(T, T^{(\alpha)}[n]) =
0$ for every set $\alpha$ and $ n\neq 0$.

($t3$). codim$_{\mathrm{add}(T)} R < \infty$.

One can check that $\mathbf{K}^{b}(\mathrm{Proj}~R)=\langle \mathrm{Add}(R)\rangle$. Hence the good tilting complexes are good silting objects in $\mathbf{D}(R)$. From the condition ($T2$), $H^{n}(\mathrm{DgEnd}_{R}(U))=H^{n}(\mathbb{R}\mathrm{Hom}_{R}(U,U))\cong \mathrm{Hom}_{\mathbf{D}(R)}(T, T[n]) =0$ for  $ n\neq 0$. Since $H^{0}(\mathrm{DgEnd}_{R}(U))\cong B$, we get $\mathbf{D}(\mathrm{DgEnd}_{R}(U),d)=\mathbf{D}(B)$. As a consequence of Theorems \ref{the4.5} and \ref{the4.8}, we obtain the folllowing recollement.

\begin{corollary}Let $R$ be an ring, and $T$ a good tilting complex, and let $B=\mathrm{End}(T)$.  Then there exist a dg-algebra $C$ and a
recollement of triangulated categories
$$\xymatrixcolsep{3pc}\xymatrix{\mathbf{D}(C,d)\ar[r]^{\lambda_{\ast}}
&\ar@<-3ex>[l]_{}
\ar@<3ex>[l]^{}\mathbf{D}(B)
\ar[r]^{\mathbf{G}}&\ar@<-3ex>[l]_{-\otimes^{\mathbb{L}}_{R}\mathbf{H}(R)}\ar@<3ex>[l]^{}\mathbf{D}(R)}$$
such that $\lambda:B\rightarrow C$ is  a homological epimorphism.
Moreover, $C$ is weak non-positive if and only if $H^{i}(T\otimes^{\bullet}_{R}\mathrm{Hom}^{\bullet}_{R}(T,R))=0$ for all
$i\geq 2$.
\end{corollary}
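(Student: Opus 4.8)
The plan is to obtain the statement by applying Theorems~\ref{the4.5} and \ref{the4.8} (with the explicit functors of Theorem~\ref{the4.3}) to the dg-algebra $R$, viewed as concentrated in degree~$0$, and to the object $T\in\mathbf{D}(R,d)$. Accordingly, the first step is to check that $T$ is admissible input, i.e.\ a good $n$-silting object for some $n$. As recalled just before the statement, $\mathbf{K}^{b}(\mathrm{Proj}~R)=\langle \mathrm{Add}(R)\rangle$, so $(T1)$ gives $T\in\langle \mathrm{Add}(R)\rangle$; feeding this together with the instance $T\in T^{\perp_{>0}}$ of $(T2)$ into the small version of \cite[Corollary 2.6]{we13} yields $\mathrm{dim}_{\mathrm{Add}(R)}T<\infty$, which is $(S1)$; the general-set form of $(T2)$ is $(S2)$; and $(t3)$ asserts $\mathrm{codim}_{\mathrm{add}(T)}R\le n$ for some $n$, which is precisely the good substitute $(s3)$ for $(S3)$. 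Hence $T$ is a good $n$-silting object in $\mathbf{D}(R,d)$. Moreover, by $(T2)$ the dg-algebra $\mathrm{DgEnd}_{R}(T)$ has cohomology concentrated in degree~$0$ with $H^{0}(\mathrm{DgEnd}_{R}(T))\cong B$, so (as observed before the statement) the canonical quasi-isomorphism identifies $\mathbf{D}(\mathrm{DgEnd}_{R}(T),d)$ with $\mathbf{D}(B)$.

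Granting this, Theorem~\ref{the4.8}, together with Theorem~\ref{the4.5} for the presentation of $\lambda$ as a fraction $f\sigma^{-1}$ in $\mathrm{HoDga}(k)$ and Theorem~\ref{the4.3} for the description of the functors, produces a dg-algebra $C$, a homological epimorphism $\mathrm{DgEnd}_{R}(T)\to C$, and a recollement among $\mathbf{D}(C,d)$, $\mathbf{D}(\mathrm{DgEnd}_{R}(T),d)$ and $\mathbf{D}(R,d)$ in which $\mathbf{G}$ is the functor $\mathbf{D}(\mathrm{DgEnd}_{R}(T),d)\to\mathbf{D}(R,d)$ and $-\otimes^{\mathbb{L}}_{R}\mathbf{H}(R)$, $\mathbf{H}$ are its left and right adjoints. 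Composing the epimorphism $\mathrm{DgEnd}_{R}(T)\to C$ with the quasi-isomorphism of the previous paragraph gives a morphism $\lambda\colon B\to C$ in $\mathrm{HoDga}(k)$; since a morphism of dg-algebras is a homological epimorphism exactly when its restriction-of-scalars functor is fully faithful, and this is stable under composition with a triangle equivalence, $\lambda$ is again a homological epimorphism. Transporting the recollement through the equivalence $\mathbf{D}(\mathrm{DgEnd}_{R}(T),d)\simeq\mathbf{D}(B)$ yields the recollement displayed in the statement.

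For the final equivalence, Theorem~\ref{the4.8} shows that $C$ is weak non-positive if and only if $H^{i}\bigl(T\otimes^{\mathbb{L}}_{R}\mathbb{R}\mathrm{Hom}_{R}(T,R)\bigr)=0$ for all $i\ge 2$. Since $T\in\mathbf{K}^{b}(\mathrm{Proj}~R)$ is a bounded complex of projectives it is cofibrant over $R$, so $\mathbb{R}\mathrm{Hom}_{R}(T,R)=\mathrm{Hom}^{\bullet}_{R}(T,R)$ and $T\otimes^{\mathbb{L}}_{R}\mathrm{Hom}^{\bullet}_{R}(T,R)=T\otimes^{\bullet}_{R}\mathrm{Hom}^{\bullet}_{R}(T,R)$; hence the condition becomes $H^{i}\bigl(T\otimes^{\bullet}_{R}\mathrm{Hom}^{\bullet}_{R}(T,R)\bigr)=0$ for all $i\ge 2$, as asserted.

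The proof is mostly a matter of unwinding notation; the one point deserving care — and the main, rather mild, obstacle — is the transfer of the recollement along $\mathrm{DgEnd}_{R}(T)\simeq B$: one must verify that under $\mathbf{D}(\mathrm{DgEnd}_{R}(T),d)\simeq\mathbf{D}(B)$ the functors $\mathbf{G}$, $\mathbf{H}$ and $-\otimes^{\mathbb{L}}_{R}\mathbf{H}(R)$ correspond to the like-named functors over $B$, and that ``homological epimorphism'' is preserved. Both follow formally, because a quasi-isomorphism of dg-algebras induces a triangle equivalence of derived categories compatible with the usual $\otimes^{\mathbb{L}}$--$\mathbb{R}\mathrm{Hom}$ adjunctions; no ingredient beyond Theorems~\ref{the4.5}, \ref{the4.3} and \ref{the4.8} is required.
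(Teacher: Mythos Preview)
Your argument is correct and follows the same route as the paper, which disposes of this corollary in the discussion immediately preceding it: good tilting complexes are good silting objects, $\mathrm{DgEnd}_{R}(T)$ has cohomology concentrated in degree $0$ and equal to $B$ so that $\mathbf{D}(\mathrm{DgEnd}_{R}(T),d)\simeq\mathbf{D}(B)$, and one then invokes Theorems~\ref{the4.5} and~\ref{the4.8}. One small correction: to extract $(S1)$ from \cite[Corollary~2.6]{we13} the relevant input is $T\in\mathrm{Add}(R)^{\perp_{>0}}$ (equivalently $H^{i}(T)=0$ for $i>0$), not $T\in T^{\perp_{>0}}$ --- compare the paper's argument for $(S1)$ in the $2$-term case.
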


Let $R$ be a ring and $T$ an $R$-module. Consider the following
conditions on $T$:

(T1) The projective dimension of $T$ is finite;

(T2) The module $T$ has no self-extensions, that is $\mathrm{Ext}^{i}_{R}(T, T^{(\alpha)}) =
0$ for every $ i\geq 1$ and every set $\alpha$.

(T3) There is an exact sequence of $R$-modules
$$0 \rightarrow R\rightarrow T_{0}\rightarrow T_{1} \rightarrow \cdots \rightarrow T_{n} \rightarrow 0$$
such that $T_{i}$ is isomorphic to a direct summand of a finite direct sum of copies of $T$ for
all $0\leq i \leq n$.

Then $T$ is called a \emph{good $n$-tilting module}, if it satisfies (T1), (T2), (T3) and the projective dimension of $T$ is at most $n$. Let $B$ be the endomorphism ring of $T$. Chen and Xi \cite[Theorem 1.1]{ch12} proved that if $T$ is a good $1$-tilting module, then there exists a ring $C$, a homological ring epimorphism $B\rightarrow C$ and a recollement among the (unbounded) derived module categories $\mathbf{D}(C)$ of $C$, $\mathbf{D}(B)$ of $B$ and $\mathbf{D}(A)$ of $A$. Recently, in \cite{ch19}, they give a necessary
and sufficient condition for good tilting modules of higher projective dimension to induce
recollements of derived module categories via homological ring epimorphisms.

We obtain the next recollement. One can compare it with \cite[Theorem 1.1]{ch19}.

\begin{theorem}\label{the5.5}Let $R$ be an ordinary algebra, $T_{R}$ a good tilting module, and let $B$ be the endomorphism ring of $T$.  Then there exist a  dg-algebra $C$ and a
recollement of triangulated categories
$$\xymatrixcolsep{3pc}\xymatrix{\mathbf{D}(C,d)\ar[r]^{\lambda_{\ast}}
&\ar@<-3ex>[l]_{}
\ar@<3ex>[l]^{}\mathbf{D}(B)
\ar[r]^{\mathbf{G}}&\ar@<-3ex>[l]_{-\otimes^{\mathbb{L}}_{R}\mathbf{H}(R)}\ar@<3ex>[l]^{}\mathbf{D}(R)}$$
such that $\lambda:B\rightarrow C$ is  a homological epimorphism.
Moreover, we can choose $C$ to be an ordinary $k$-algebra if and only if $H^{i}(T\otimes_{R}\mathrm{Hom}^{\bullet}_{R}(U,R))=0$ for all
$i\geq 2$, where the complex $U$ is a deleted projective resolution of $T$.
\end{theorem}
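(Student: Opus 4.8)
The plan is to recognise a good tilting module as a good silting object, invoke Theorems~\ref{the4.5} and~\ref{the4.8}, and then analyse the induced dg-algebra $C$ cohomologically.

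\emph{Reduction.} Regard $R$ as a dg-algebra concentrated in degree $0$ and set $n=\mathrm{pd}_{R}T$. Then (T1) says $T\in\mathbf{K}^{b}(\mathrm{Proj}\,R)=\langle\mathrm{Add}(R)\rangle$, which together with $T\in R^{\perp_{>0}}$ (a reformulation of (T2)) gives $\dim_{\mathrm{Add}(R)}T\leq n$ by \cite[Corollary 2.6(2)]{we13}, i.e. ($S1$); condition ($S2$) is (T2) and ($s3$) is (T3). Hence $T$ is a good $n$-silting object in $\mathbf{D}(R,d)$. Let $U$ be a deleted projective resolution of $T$, a cofibrant dg-$R$-module with $U\cong T$. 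By (T2), $H^{i}(\mathrm{DgEnd}_{R}(U))\cong\mathrm{Ext}^{i}_{R}(T,T)$ vanishes for $i\neq0$ and equals $B$ for $i=0$, so $\mathrm{DgEnd}_{R}(U)$ is quasi-isomorphic as a dg-algebra to $B$ and $\mathbf{D}(\mathrm{DgEnd}_{R}(U),d)=\mathbf{D}(B)$.

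\emph{The recollement and the ``weak non-positive'' condition.} Applying Theorem~\ref{the4.8} to the good $n$-silting object $T$ yields the dg-algebra $C$, the homological epimorphism $\lambda\colon B\to C$, the displayed recollement, and the equivalence: $C$ is weak non-positive if and only if $H^{i}(U\otimes^{\mathbb{L}}_{R}\mathbb{R}\mathrm{Hom}_{R}(U,R))=0$ for all $i\geq2$. Since $U$ is cofibrant, $\mathbb{R}\mathrm{Hom}_{R}(U,R)=\mathrm{Hom}^{\bullet}_{R}(U,R)$, and since $U$ is a bounded complex of projectives the derived tensor product equals the ordinary one; a short computation then identifies $U\otimes^{\mathbb{L}}_{R}\mathrm{Hom}^{\bullet}_{R}(U,R)$ with $T\otimes_{R}\mathrm{Hom}^{\bullet}_{R}(U,R)$, so $C$ is weak non-positive if and only if $H^{i}(T\otimes_{R}\mathrm{Hom}^{\bullet}_{R}(U,R))=0$ for all $i\geq2$.

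\emph{From weak non-positive to ordinary.} It remains to see that ``$C$ may be chosen an ordinary $k$-algebra'' is equivalent to ``$C$ is weak non-positive''; one direction is trivial, and for the other it suffices to show that $C$ is \emph{always} weak positive, for then $C$ weak non-positive forces $H^{i}(C)=0$ for all $i\neq0$, so $C$ may be replaced by the ordinary $k$-algebra $H^{0}(C)$ (replacing $\lambda$ by its composition with the quasi-isomorphism keeps the recollement). To see $C$ weak positive, use Lemma~\ref{lem4.7} ($C\cong i^{*}(B)$) and the triangle
$$U\otimes^{\mathbb{L}}_{R}\mathbb{R}\mathrm{Hom}_{R}(U,R)\longrightarrow B\longrightarrow C\rightsquigarrow$$
from the proof of Theorem~\ref{the4.8}, whose first arrow is the evaluation map under $B\cong\mathbb{R}\mathrm{Hom}_{R}(U,U)$; passing to cohomology reduces weak positivity to the vanishing $H^{j}(U\otimes^{\mathbb{L}}_{R}\mathbb{R}\mathrm{Hom}_{R}(U,R))=0$ for $j\leq-1$ and the injectivity of evaluation on $H^{0}$. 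These follow from the good-tilting hypotheses: Lemma~\ref{lem4.11} (applicable as $R$ is weak non-positive) shows the left $B$-module $U$ is perfect, whence $\mathbb{R}\mathrm{Hom}_{R}(U,R)\simeq\mathbb{R}\mathrm{Hom}_{B^{op}}(U,B)$ has cohomology in $[0,n]$; combining this with the $\mathrm{add}(T)$-coresolution of $R$ in (T3) and $\mathrm{Ext}^{>0}_{R}(T,T_{i})=0$ for $T_{i}\in\mathrm{add}(T)$ gives the required bound and injectivity.

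\emph{Main obstacle.} Everything except the last step is a direct appeal to Theorems~\ref{the4.5}, \ref{the4.8} and Lemmas~\ref{lem4.7}, \ref{lem4.11}; the real work is the cohomological-amplitude computation for $C$ in the last step, and the closely related identification of $U\otimes^{\mathbb{L}}_{R}\mathrm{Hom}^{\bullet}_{R}(U,R)$ with $T\otimes_{R}\mathrm{Hom}^{\bullet}_{R}(U,R)$.
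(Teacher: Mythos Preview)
Your overall strategy matches the paper's: recognise $T$ (via its deleted projective resolution $U$) as a good silting object, invoke Theorem~\ref{the4.8} for the recollement and the weak-non-positivity criterion, and then show that $C$ is always weak positive so that ``weak non-positive'' upgrades to ``ordinary algebra''. The reduction paragraph and the appeal to Theorem~\ref{the4.8} are correct.

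The gap lies in your ``short computation'' and in the weak-positivity step, and both come down to the same missing ingredient. You correctly observe that $U\otimes^{\mathbb{L}}_{R}\mathrm{Hom}^{\bullet}_{R}(U,R)=U\otimes_{R}\mathrm{Hom}^{\bullet}_{R}(U,R)$ since $U$ is a bounded complex of projectives, but the passage to $T\otimes_{R}\mathrm{Hom}^{\bullet}_{R}(U,R)$ is \emph{not} formal: it amounts to $T\otimes^{\mathbb{L}}_{R}V\simeq T\otimes_{R}V$ for $V=\mathrm{Hom}^{\bullet}_{R}(U,R)$, and the terms $V^{i}=\mathrm{Hom}_{R}(U^{-i},R)$ are duals of possibly infinite projectives (e.g.\ $R^{I}$ when $U^{-i}=R^{(I)}$), which need not be flat left $R$-modules. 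Likewise, in your amplitude argument, knowing that $V$ has cohomology in $[0,n]$ and $\mathrm{fd}_{R}T\leq n$ only bounds $H^{j}(T\otimes^{\mathbb{L}}_{R}V)$ to $[-n,n]$ via the hyper-Tor spectral sequence; applying $U\otimes^{\mathbb{L}}_{R}\mathbb{R}\mathrm{Hom}_{R}(U,-)$ to the $\mathrm{add}(T)$-coresolution of $R$ produces objects $T\otimes^{\mathbb{L}}_{R}\mathrm{Hom}_{R}(T,T_{i})$ whose negative cohomology is $\mathrm{Tor}^{R}_{>0}(T,\mathrm{Hom}_{R}(T,T_{i}))$, over which you have no control.

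The paper supplies exactly this control by invoking the \emph{strongly $R$-Mittag-Leffler} property of a good tilting module \cite[Lemma~5.5]{ch19}. Via the argument of \cite[Lemma~4.3]{ch19} this is what allows one to replace $T\otimes^{\mathbb{L}}_{R}\mathbb{R}\mathrm{Hom}_{B^{op}}(T,B)$ by the underived $T\otimes_{R}\mathrm{Hom}^{\bullet}_{B^{op}}(T,X)$ with $X:=\mathrm{Hom}^{\bullet}_{R}(U,T)$, after which one identifies $\mathrm{Hom}^{\bullet}_{B^{op}}(T,X)\cong\mathrm{Hom}^{\bullet}_{R}(U,R)$. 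For weak positivity the paper uses that each $X^{i}\in\mathrm{Prod}(T)$, so the Mittag-Leffler property makes the evaluation $T\otimes_{R}\mathrm{Hom}_{B^{op}}(T,X^{i})\to X^{i}$ termwise injective; this yields a short exact sequence of \emph{complexes} $0\to T\otimes_{R}\mathrm{Hom}^{\bullet}_{B^{op}}(T,X)\to X\to L\to 0$ with $L$ concentrated in non-negative degrees, and since $L\simeq i^{\ast}(B)$ one reads off $H^{j}(C)=0$ for $j<0$. This complex-level injectivity argument is genuinely stronger than any purely derived-category amplitude estimate, and is the step your sketch is missing.
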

\begin{proof} Denote by $U$ a deleted projective resolution of $T$. Then $T=H^{0}(U)$ and $U\in \mathbf{D}(R)$ is a good silting object. It is easy to see that $\mathbf{D}(\mathrm{DgEnd}_{R}(U),d)=\mathbf{D}(B)$ and then the recollement follows by Theorem \ref{the4.8}.

Note that $U_{R}\rightarrow T_{R}$ is a quasi-isomorphism and $\mathrm{Hom}_{R}^{\bullet}(U,-)$ preserves quasi-isomorphism. This implies that
$\mathrm{Hom}_{R}^{\bullet}(U,U)\cong\mathrm{Hom}_{R}^{\bullet}(U,T)$ in $\mathbf{D}(B)$. Denote  $X:=\mathrm{Hom}_{R}^{\bullet}(U,T)$. Since each term of the complex $U$ belongs to $\mathrm{Add}(R)$, it follows that each term of the complex $X$ belongs to $\mathrm{Prod}(T_{R})$.
Furthermore, since $T_{R}$ is a good tilting module, it is shown in \cite[Lemma 5,5]{ch19}  that $T$ is a strongly $R$-Mittag-Leffler module(see \cite[Definition 4.1]{ch19} or \cite[Definition 1.1]{li08}). Therefore, by an argument similar to that in \cite[Lemma 4.3]{ch19}, we see that $$U\otimes^{\mathbb{L}}_{R}\mathbb{R}\mathrm{Hom}_{R}(U,R)\cong T\otimes^{\mathbb{L}}_{R}\mathbb{R}\mathrm{Hom}_{B^{op}}(T,B)\cong T\otimes_{R}\mathrm{Hom}^{\bullet}_{B^{op}}(T,\mathrm{Hom}_{R}^{\bullet}(U,T)).$$
From the proof of \cite[Theorem 1.1]{ch19}, there exists an isomorphism $\mathrm{Hom}^{\bullet}_{B^{op}}(T,\mathrm{Hom}_{R}^{\bullet}(U,T))\cong\mathrm{Hom}_{R}^{\bullet}(U,R)$. Consequently, by Theorem \ref{the4.8}, $C$ is weak non-positive if and only if $H^{i}(T\otimes_{R}\mathrm{Hom}^{\bullet}_{R}(U,R))=0$ for all $i\geq 2$.

Next, we claim that in this case, the dg-algebra $C$ is always weak positive. Thus $C$ can be replaced  by an ordinary $k$-algebra.
In fact, since $T$ is a strongly $R$-Mittag-Leffler  and each term of the complex $X$ belongs to $\mathrm{Prod}(T_{R})$, we obtain a short exact sequence of complexes:
$$0\rightarrow T\otimes_{R}\mathrm{Hom}^{\bullet}_{B^{op}}(T,X)\rightarrow X\rightarrow L\rightarrow 0,$$
where the the terms of complex $L$ are concentrated in degrees $\geq0$.
Hence we can construct the commutative diagram both in $\mathbf{D}(B)$ and $\mathbf{D}(B^{op})$:
$$\scalebox{1}[1]{\xymatrix{
 U\otimes^{\mathbb{L}}_{R}\mathbb{R}\mathrm{Hom}_{R}(U,R) \ar[d]_{\simeq} \ar[r]^{} & B \ar@{=}[d]_{} \ar[r]^{} & i_{\ast}i^{\ast}B \ar@{=}[d]_{} \ar@{~>}[r]^{} &  \\
 T\otimes^{\mathbb{L}}_{R}\mathbb{R}\mathrm{Hom}_{B^{op}}(T,B) \ar[d]_{\simeq} \ar[r]^{} & B \ar[d]_{\simeq} \ar[r]^{} & i_{\ast}i^{\ast}B \ar[d]_{\simeq} \ar@{~>}[r]^{} &  \\
  T\otimes_{R}\mathrm{Hom}^{\bullet}_{B^{op}}(T,X) \ar[r]^{} & X \ar[r]^{} & L\ar@{~>}[r]^{} &.    }}$$
In particular, $H^{j}(i^{\ast}B)\cong H^{j}(L)=0$ for all $j<0$. Hence by the proof of Lemma \ref{lem4.7}, we deduce that $C$ is always weak positive.
\end{proof}
If we specialising Theorem \ref{the5.5} to the case that $T$ is a good 1-tilting module, then it is easy to check that $H^{i}(T\otimes_{R}\mathrm{Hom}^{\bullet}_{R}(U,R))=0$ for all
$i\geq 2$, and we get the following corollary, which recover the results in \cite[Theorem 1.1]{ch12}.

\begin{corollary}Let $A$ be an ring, $T$ a good 1-tilting $A$-module and $B$ the endomorphism ring of $T$. Then there is a ring $C$, a homological ring epimorphism $\lambda : B \rightarrow C$ and a
recollement among the unbounded derived categories of the rings $A$, $B$ and $C$:
$$\xymatrixcolsep{3pc}\xymatrix{\mathbf{D}(C)\ar[r]^{\lambda_{\ast}}
&\ar@<-3ex>[l]_{}
\ar@<3ex>[l]^{}\mathbf{D}(B)
\ar[r]^{j^{!}}&\ar@<-3ex>[l]_{}\ar@<3ex>[l]^{}\mathbf{D}(A)}$$
such that the triangle functor $j^{!}$  is isomorphic to the total left-derived functor $-\otimes^{\mathbb{L}}_{B}T$.
\end{corollary}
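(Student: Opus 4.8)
The plan is to obtain this corollary as the special case $n=1$ of Theorem~\ref{the5.5}. First I would let $U$ denote a deleted projective resolution of $T$; since $T$ has projective dimension at most $1$, the complex $U$ is concentrated in cohomological degrees $-1$ and $0$. As observed just before the statement, $U$ is then a good $1$-silting object of $\mathbf{D}(A)$ with $T\cong H^{0}(U)$, and $\mathbf{D}(\mathrm{DgEnd}_{A}(U),d)=\mathbf{D}(B)$. Applying Theorem~\ref{the4.8} (equivalently, the first part of Theorem~\ref{the5.5}) produces a dg-algebra $C$, a homological epimorphism $\lambda\colon B\to C$, and the recollement displayed in the statement, in which the triangle functor $\mathbf{D}(B)\to\mathbf{D}(A)$ is $\mathbf{G}=-\otimes_{B}^{\mathbb{L}}U$.

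Next I would check that $C$ can be taken to be an ordinary ring. By Theorem~\ref{the5.5}, the dg-algebra $C$ is weak non-positive precisely when $H^{i}\big(T\otimes_{R}\mathrm{Hom}^{\bullet}_{R}(U,R)\big)=0$ for all $i\geq 2$. Because $U$ is concentrated in degrees $-1,0$, the complex $\mathrm{Hom}^{\bullet}_{R}(U,R)$ is concentrated in degrees $0,1$, and since $T$ is a module the complex $T\otimes_{R}\mathrm{Hom}^{\bullet}_{R}(U,R)$ is again concentrated in degrees $0,1$; hence its cohomology automatically vanishes in all degrees $\geq 2$, so $C$ is weak non-positive. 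On the other hand, the final paragraph of the proof of Theorem~\ref{the5.5}---which uses that a good tilting module is strongly Mittag--Leffler (\cite[Lemma 5.5]{ch19}) together with the three-by-three diagram there---shows that $C$ is also weak positive. Therefore $H^{i}(C)=0$ for every $i\neq 0$, so the canonical map $C\to H^{0}(C)$ is a quasi-isomorphism of dg-algebras; writing $C':=H^{0}(C)$, an ordinary $k$-algebra, we obtain a triangle equivalence $\mathbf{D}(C,d)\simeq\mathbf{D}(C')$ and, composing with $\lambda$, a homological ring epimorphism $B\to C'$. Replacing $C$ by $C'$ yields a recollement of ordinary unbounded derived module categories $\mathbf{D}(C')$, $\mathbf{D}(B)$, $\mathbf{D}(A)$.

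Finally I would identify the functor $j^{!}$. In the recollement of Theorem~\ref{the4.8}, the right-hand term is attached via $\mathbf{G}=-\otimes_{B}^{\mathbb{L}}U$, which in the standard recollement notation is the functor labelled $j^{!}=j^{\ast}$. Since the quasi-isomorphism $U\to T$ of complexes of right $A$-modules is compatible with the $B$-actions (cf.\ Lemma~\ref{lem3.3a} and the identification $\mathbb{R}\mathrm{Hom}_{B}(\mathbf{H}(A),B)\cong U$ used in Theorem~\ref{the4.3}), we have $-\otimes_{B}^{\mathbb{L}}U\cong-\otimes_{B}^{\mathbb{L}}T$ as triangle functors $\mathbf{D}(B)\to\mathbf{D}(A)$, whence $j^{!}\cong-\otimes_{B}^{\mathbb{L}}T$, as required. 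The step that carries the actual content---rather than bookkeeping---is the verification that $C$ is weak \emph{positive}, i.e.\ that $H^{i}(C)=0$ for $i<0$; everything else follows either from the degree bounds forced by $\mathrm{pd}\,T\leq 1$ or by direct quotation of Theorem~\ref{the5.5}. That positivity step is already handled in the proof of Theorem~\ref{the5.5} through the strongly Mittag--Leffler property of good tilting modules, so here it only needs to be invoked; for a $1$-tilting module it could alternatively be seen directly from the two-term shape of $U$ and the exactness in non-negative degrees of the relevant short exact sequence of complexes.
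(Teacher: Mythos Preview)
Your proposal is correct and follows essentially the same approach as the paper: specialize Theorem~\ref{the5.5} to the case $n=1$ and observe that the two-term shape of $U$ forces $H^{i}(T\otimes_{R}\mathrm{Hom}^{\bullet}_{R}(U,R))=0$ for $i\geq 2$, so that $C$ may be replaced by an ordinary ring. Your write-up is more detailed than the paper's one-line remark (in particular you spell out the identification $j^{!}\cong-\otimes_{B}^{\mathbb{L}}T$ and the passage from the dg-algebra $C$ to $H^{0}(C)$), but the strategy is the same.
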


\bigskip \centerline {\bf Acknowledgements}
\bigskip
\hspace{-0.5cm}  This research was partially supported by the National Natural Science Foundation of China (Nos. 11771202, 11771212). The main part of this work was carried out while the first author was visiting  Charles University in Prague. He gratefully acknowledges the financial support from China Scholarship Council (CSC No. 201806190107) and the kind hospitality from the host university.

\end{document}